\newtheorem{theorem}{Theorem}[section]
\newtheorem{lemma}[theorem]{Lemma}
\newtheorem{prop}[theorem]{Proposition}
\newtheorem{cor}[theorem]{Corollary}
\theoremstyle{definition}
\newtheorem{defn}[theorem]{Definition}
\newtheorem{example}[theorem]{Example}
\theoremstyle{remark}
\newtheorem{remark}[theorem]{Remark}
\numberwithin{equation}{section}
\def\U{{\mathcal U}}
\def\R{{\mathbb R}}
\def\T{{\mathbb T}}
\def\t{{\mathfrak{t}}}
\def\K{{\mathcal K}}
\def\W{{\mathcal W}}
\def\Z{{\mathbb Z}}
\begin{document}

\baselineskip=1.2\baselineskip
 
\title[A \v{C}ech Dimensionally Reduced Gysin Sequence for Principal Torus Bundles]{A \v{C}ech 
Dimensionally Reduced Gysin Sequence\\ for Principal Torus Bundles}

\author[P Bouwknegt]{Peter Bouwknegt}

\address[P Bouwknegt]{Department of Mathematics, Mathematical Sciences Institute, and
Department of Theoretical Physics, Research School of Physics and Engineering,
Australian National University, Canberra ACT~0200, Australia}
\email{peter.bouwknegt@anu.edu.au}

\author[R Ratnam]{Rishni Ratnam}

\address[R Ratnam]{Department of Mathematics, 
Mathematical Sciences Institute, Australian National University, Canberra
ACT~0200, Australia}
\email{rishni.ratnam@anu.edu.au, rishniratnam@yahoo.com.au}

\thanks{This research was supported under the Australian Research Council's Discovery
Projects funding scheme (project numbers DP0559415 and DP0878184).}

\begin{abstract}
In this paper we construct \v{C}ech cohomology groups that form a Gysin-type long exact 
sequence for principal torus bundles. This sequence is modeled on a de Rham cohomology 
sequence published in earlier work by Bouwknegt, Hannabuss and Mathai, which was 
developed to compute the global properties of T-duality in the presence of NS H-Flux.
\end{abstract}

\maketitle


\section{Introduction}

In \cite{BouHanMat05} the authors formulate a ``dimensionally reduced" Gysin sequence for de 
Rham cohomology. From their point of view, the purpose of this sequence is to compute the global 
properties of T-duality for principal torus bundles with background NS H-flux (see, e.g., 
\cite{BEMa, BEMb, BHMa, BunSch05, BunRumSch06, MatRos05, MatRos06}).   This Gysin sequence 
utilises a Chern-Weil or ``dimensional reduction" isomorphism, allowing a differential form on the total 
space of a principal torus bundle to be expressed as a tuple of forms over the base manifold. 
T-duality is then easily computable by concatenating or truncating tuples of forms corresponding 
to the bundle curvature and flux. In this paper, we extend much of \cite{BouHanMat05} to \v{C}ech 
cohomology, thus including the phenomena of torsion. To begin with, we shall motivate the use of Gysin 
sequences in T-duality.

Recall that, when working with principal $\T$-bundles, T-duality is an order two transformation 
$T$ defined on the set of pairs $(c,\delta)$, where $c\in \check{H}^2(Z,\underline{\Z})$ is the 
Euler class of a principal $\T$-bundle $\pi:X\to Z$ and $\delta\in \check{H}^3(X,\underline{\Z})$
represents the H-flux.
The image of $T$ is a pair $(\hat{c},\hat{\delta})$, consisting of an Euler class of some principal 
$\T$-bundle $\hat{\pi}:\hat{X}\to Z$ together with a class $\hat{\delta}\in 
\check{H}^3(\hat{X},\underline{\Z})$  
called the \emph{T-dual} Euler class and H-flux, respectively \cite{BEMa, BEMb}.
The T-dual Euler class $\hat{c}$ can be 
obtaineded from the Gysin sequence:
\begin{equation}\label{circlegysin}
\ldots \to \check{H}^k(Z,\underline{\Z})\stackrel{\pi^*}{\to} \check{H}^k(X,\underline{\Z})\stackrel{\pi_*}{\to} 
\check{H}^{k-1}(Z,\underline{\Z})\stackrel{\cup c }{\to} \check{H}^{k+1}(Z,\underline{\Z})\to \ldots \,,
\end{equation}
by defining
\begin{equation}\label{T1}
\hat{c}:=\pi_{*}\delta,
\end{equation}
so that there is a principal $\T$-bundle $\hat{\pi}:\hat{X}\to Z$ classified by $\hat{c}$. 
Consider now the Gysin sequence corresponding to the T-dual Euler class:
\begin{equation}\label{circlegysindual}
\ldots \to \check{H}^k(Z,\underline{\Z})\stackrel{\hat{\pi}^*}{\to} \check{H}^k(\hat{X},\underline{\Z})
\stackrel{\hat{\pi}_*}{\to} \check{H}^{k-1}(Z,\underline{\Z})
\stackrel{\cup \hat{c}}{\to} \check{H}^{k+1}(Z,\underline{\Z})\to \ldots \,.
\end{equation}
Observe that the sequence (\ref{circlegysin}) implies $c$ satisfies
\begin{align*}
c\cup \hat{c}=&\hat{c}\cup c = \pi_{*}\delta\cup c=0.
\end{align*}
Therefore, exactness of the sequence (\ref{circlegysindual}) implies there 
exists a class $\hat{\delta}$ such that
\begin{equation}\label{T2}
\hat{\pi}_*\hat{\delta}=c.
\end{equation}
If $\hat{\delta}$ is to be the dual H-flux, then for physical reasons, one must also 
consider the correspondence diagram:

\centerline{ \xymatrix{
&X\times_Z\hat{X} \ar[dl]_{p}\ar[dr]^{\hat{p}} & \\
X\ar[dr]_{\pi} & & \hat{X}\ar[dl]^{\hat{\pi}}\\
& Z &}} \

\noindent and the requirement that
\begin{align}\label{T3}
p^*\delta&=\hat{p}^*\hat{\delta}.
\end{align}
This condition says the component of the H-flux living on the base 
manifold $Z$ is left untouched by the duality.

The equations (\ref{T1}), (\ref{T2}) and (\ref{T3}) uniquely specify the T-dual Euler class, but not the 
T-dual H-flux. In fact, chasing through the diagrams (c.f. \cite[Sect 2.2]{BunSch05}) shows that 
$\hat{\delta}$ is only determined up to addition of a class of the form $\hat{\pi}^*([L]\cup c)$, where 
$[L]\in\check{H}^1(Z,\underline{\Z})$. On the other hand, one can show \cite[Thm 2.16]{BunSch05}
that for any choice of $[L]$ there is an automorphism of $\hat{\pi}:X\to Z$ such that 
$\hat{\delta}+\hat{\pi}^*([L]\cup c)\mapsto \hat{\delta}$. Therefore, within the isomorphism class 
$\hat{\pi}:X\to Z$, there is an unique choice of T-dual H-flux $\hat{\delta}$.

To discuss the higher dimensional case, we introduce some terminology.

\begin{defn}
Let $\pi:X\to Z$ be a principal $\T^n$-bundle, and suppose it is classified by a 
class $c\in\check{H}^2(Z,\underline{\Z}^n).$ Then we call $c$ the \emph{Euler vector} of $\pi:X\to Z$.
\end{defn}

\begin{defn}
Let $(c,\delta)\in\check{H}^2(Z,\underline{\Z}^n)\oplus \check{H}^3(X,\underline{\Z})$ be a pair of 
cohomology classes, where $c$ is the Euler vector of a principal $\T^n$-bundle $\pi:X\to Z$. 
Then we call $(c,\delta)$ a \emph{T-duality pair}.
\end{defn}

Thus, in the general case of a T-duality pair $(c,\delta)\in\check{H}^2(Z,\underline{\Z}^n)\oplus
\check{H}^3(X,\underline{\Z})$ we seek to create a Gysin sequence generalising (\ref{circlegysin}) 
and (\ref{circlegysindual}) so that we may compute T-duality using 
analogues of (\ref{T1}),(\ref{T2}) and (\ref{T3}):

\begin{align}
\label{bhmgysin1}\hat{c}&=\pi_{*}\delta,\\
\label{bhmgysin2}c&=\hat{\pi}_*\hat{\delta},\\
\label{bhmgysin3}p^*\delta&=\hat{p}^*\hat{\delta}.
\end{align}

This was partially done in \cite{BouHanMat05}, where a higher dimensional analgue of (\ref{circlegysin}) 
was computed in de Rham cohomology. We recall this generalisation presently. Let $Z$ be a $C^\infty$ 
manifold, and denote by $H^k_{dR}(Z,\t)$ the $k^{th}$ de Rham cohomology group with values in the Lie 
algebra $\t$ of $\T^n$, which we identify with $\R^n$. Now, suppose $\pi:X\to Z$ is our principal 
$\R^n/\Z^n$-bundle classified, over some open cover $\W$ of $Z$, by 
$[F]\in \check{H}^2(\W,\underline{\Z}^n)$. 
If $\underline{\R}^n$ denotes the constant sheaf with values in $\R^n$, then there is an obvious 
map $\check{Z}^k(\W,\underline{\Z}^n)\to \check{Z}^k(\W,\underline{\R}^n)$, so that, if $\Omega^k(Z,\R^n)$ 
denotes the ring of differential $k$-forms with values in $\R^n$, composition with the \v{C}ech-de Rham 
map $\check{Z}^k(\W,\underline{\R}^n)\to \Omega^k(Z,\R^n)$ (which is an isomorphism on cohomology) 
gives a map $\check{Z}^k(\W,\underline{\Z}^n)\to \Omega^k(Z,\R^n).$ The image of the cocycle $F$ in 
$\Omega^2(Z,\R^n)$ is a closed form, denoted $F_2$, which we call a \emph{curvature} form of $\pi:X\to Z$ 
(we give an explicit formula for $F_2$ in terms of $F$ in Equation (\ref{F2rep}) below).

Let $\wedge^i \t^*$ denote the $i^{th}$ exterior power of the dual 
Lie algebra $\t^*$. For $0\leq m\leq l\leq k$ 
we define a cochain complex
\begin{equation}\label{BHMComplex}
C_{F_2}^{k,(m,l)}(Z,\t^*):=\bigoplus_{i=m}^l\Omega^{k-i}(Z,\wedge^i \t^*),
\end{equation}
where $\Omega^{k-i}(Z,\wedge^i \t^*)$ is the group of differential $(k-i)$-forms with values 
in $\wedge^i \t^*$. Let $\{X_i\}_{i\in (1,\dots ,n)}$ and $\{X_i^*\}_{i\in (1,\dots ,n)}$ be a basis 
of $\mathfrak t$, and dual basis of $\mathfrak t^*$, respectively. 
Note that $\{X_{j_1}^*\wedge X_{j_2}^*\wedge \dots 
\wedge X_{j_i}^*\,:0\leq j_1<j_2<\dots < j_i\leq n\}$ 
is a basis for $\wedge^i \t^*$, so that every element $H_{(k-i)i}$ of 
$\Omega^{k-i}(Z,\wedge^i \t^*)$ can be written as a sum
\[\sum_{1\leq j_1<j_2<\dots < j_i\leq n}(\tilde{H}_{(k-i)i})_{j_1\dots j_i}\otimes 
X_{j_1}^*\wedge\dots\wedge X_{j_i}^*,\]
with $(\tilde{H}_{(k-i)i})_{j_1\dots j_i}\in\Omega^{k-i}(Z)$. To define a differential on the cochain 
complex (\ref{BHMComplex}), we first observe that on each group 
$\Omega^{k-i}(Z,\wedge^i \t^*)$ we can define maps
\[\wedge F_2:\Omega^{k-i}(Z,\wedge^i \t^*)\to \Omega^{k-i+2}(Z,\wedge^{i-1} \t^*)\]
by the formula
\begin{align*}
&H_{(k-i)i}\wedge F_2:=\\
&\sum_{1\leq j_1<j_2<\dots < j_i\leq n}\sum_{l=1}^i(-1)^{l+1}(\tilde{H}_{(k-i)i})_{j_1\dots j_i}\wedge 
F_2(X_{j_l}^*)\otimes X_{j_1}^*\wedge\dots \wedge\widehat{X_{j_l}^*}\wedge\dots\wedge X_{j_i}^*,\\
\end{align*}
where $\widehat{X_{j_l}^*}$ denotes omission of $X_{j_l}^*$. Then we have a differential 
$D_{F_2}$ on  $C_{F_2}^{k,(m,l)}(Z,\t^*)$
given by
\begin{align*}
&D_{F_2}(H_{(k-m)m},\dots,H_{(k-l)l})=(dH_{(k-m)m}+(-1)^{k-m-1}H_{(k-m-1)(m+1)}\wedge F_2,\\
&\quad dH_{(k-m-1)(m+1)}+(-1)^{k-m-2}H_{(k-m-2)(m+2)}\wedge F_2, \dots,dH_{(k-l)l}).
\end{align*}
The fact that $F_2(X^*)\wedge F_2(Y^*)=F_2(Y^*)\wedge F_2(X^*)$ for all dual vectors $X^*$ 
and $Y^*$ implies $D_{F_2}^2=0$, and we denote the resulting cohomology groups 
by $H^{k,(m,l)}_{F_2}(Z,\t^*)$. This differential can be seen diagrammatically in Figure \ref{BHMdifferential}.

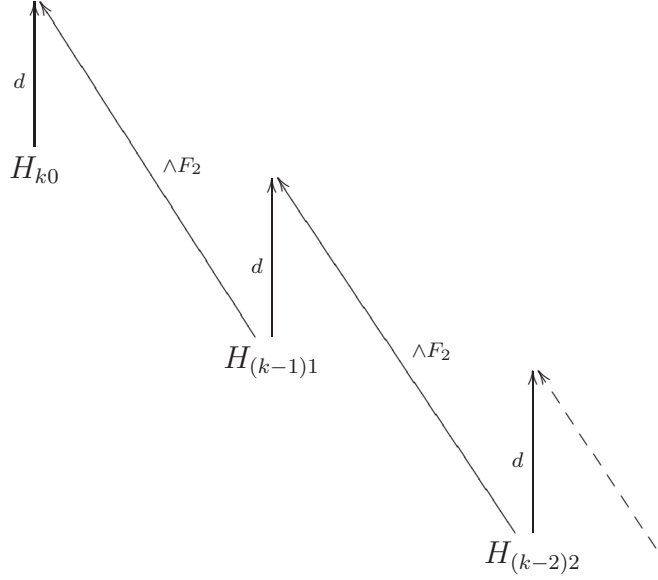
\begin{figure}
\centerline{\xymatrix{
&&&&&\\
&&&&&\\
H_{k0}\ar[uu]^{d}&&&&&\\
&&&&&\\
&&H_{(k-1)1}\ar[uu]^{d}\ar[uuuull]_{\wedge F_2}&&&\\
&&&&&\\
&&&&H_{(k-2)2}\ar[uu]^{d}\ar[uuuull]_{\wedge F_2}& \ar@{-->}[uul]
}}
\caption{The differential $D_{F_2}$}
\label{BHMdifferential}
\end{figure}

\begin{example}\label{l=2example}
Set $m=0$ and $l=2$, and consider a $k$-cochain $(H_{k0},H_{(k-1)1}$, 
$H_{(k-2)2})$ in $C_{F_2}^{k,(0,2)}(Z,\t^*)$. Let 
\begin{align*}
H_{(k-2)2}=&\sum_{1\leq i<j\leq n}(\tilde{H}_{(k-2)2})_{ij}\otimes X_i^*\wedge X_j^*,\quad \mbox{and}\\
H_{(k-1)1}=&\sum_{1\leq i\leq n}(\tilde{H}_{(k-1)1})_{i}\otimes X_i^*.\\
\end{align*}
Thus $H_{(k-2)2}\wedge F_2$ is a differential $k$-form with values in $\t^*\cong\R^n$.  Let us define
\[e_l:=(0,\dots,\underbrace{1}_{l^{th} \mbox{ entry}},\dots,0).\]
Then, the $l^{th}$ component $(H_{(k-2)2}\wedge F_2)_l$ can be 
found by computing $(H_{(k-2)2}\wedge F_2)(X_l)$, so one can see
\begin{align}
(H_{(k-2)2}\wedge F_2)_l=&\sum_{1\leq i<j\leq n}(\tilde{H}_{(k-2)2})_{ij}
\wedge F_2(X_i^*)\otimes X_j^*(X_l)\nonumber \\
&\quad-(\tilde{H}_{(k-2)2})_{ij}\wedge F_2(X_j^*)\otimes X_i^*(X_l) \notag\\
\label{wedgef1}=&\sum_{1\leq i<j\leq n}(\tilde{H}_{(k-2)2})_{ij}\wedge 
\big[(F_2)_i(e_l)_j-(F_2)_j (e_l)_i\big].
\end{align}
Similarly we have
\begin{equation}\label{wedgef2}
H_{(k-1)1}\wedge F_2=\sum_{i=1}^n(\tilde{H}_{(k-1)1})_i\wedge (F_2)_i \,.
\end{equation}
Therefore, $(H_{k0},H_{(k-1)1},H_{(k-2)2})$ is a cocycle if all of the following hold:
\begin{align*}
&dH_{k0} +(-1)^{k-1}\sum_{i=1}^n(\tilde{H}_{(k-1)1})_i\wedge (F_2)_i=0,\\
&(dH_{(k-1)1})_l+ (-1)^{k-2}\sum_{1\leq i<j\leq n}(\tilde{H}_{(k-2)2})_{ij}\wedge 
\big[(F_2)_i(e_l)_j-(F_2)_j (e_l)_i\big]=0,\\
&dH_{(k-2)2}=0.
\end{align*}
\end{example}

\begin{remark}\label{dependsonF2}
The above definition of $D_{F_2}$ differs from that of \cite{BouHanMat05} by 
our convention of minus signs. These are more convenient for showing the 
relationship between their Gysin sequence and ours.
\end{remark}

\begin{remark}
Notice that the above definition for $H^{k,(m,l)}_{F_2}(Z,\t^*)$ depends on 
the choice of representative $F_2$.
\end{remark}

The first reason why the groups $H^{k,(m,l)}_{F_2}(Z,\t^*)$ are of interest is 
the following theorem, which gives a ``dimensional reduction" isomorphism for de 
Rham cohomology:

\begin{theorem}[{\cite[Sect 3.1]{BouHanMat05}}]\label{ChernWeil}
Let $Z$ be a $C^\infty$ manifold, and suppose $\pi:X\to Z$ is a principal $\R^n/\Z^n$-bundle with 
curvature $F_2\in \Omega^{2}(Z,\t)$. Then there exists an isomorphism
\[H^k_{dR}(X)\cong H^{k,(0,k)}_{F_2}(Z,\t^*).\]
\end{theorem}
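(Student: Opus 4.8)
The plan is to build the isomorphism via a Chern--Weil type connection argument, identifying $H^{k,(0,k)}_{F_2}(Z,\t^*)$ with the cohomology of a subcomplex of $\Omega^*(X)$ that is manifestly quasi-isomorphic to the full de Rham complex. First I would fix a principal connection one-form $\theta\in\Omega^1(X,\t)$ on $\pi:X\to Z$, with curvature $d\theta=\pi^*F_2$ (after identifying the abelian Lie algebra so that the bracket term vanishes). Writing $\theta=\sum_i\theta^i\otimes X_i$, the forms $\theta^{j_1}\wedge\cdots\wedge\theta^{j_i}$ for $j_1<\cdots<j_i$ give, fibrewise, a basis dual to the monomials $X_{j_1}^*\wedge\cdots\wedge X_{j_i}^*$ in $\wedge^i\t^*$. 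Any $\omega\in\Omega^k(X)$ can then be expanded uniquely as
\[
\omega=\sum_{i=0}^{k}\ \sum_{j_1<\cdots<j_i}\pi^*\big((\tilde H_{(k-i)i})_{j_1\dots j_i}\big)\wedge \theta^{j_1}\wedge\cdots\wedge\theta^{j_i}\,,
\]
with $(\tilde H_{(k-i)i})_{j_1\dots j_i}\in\Omega^{k-i}(Z)$; packaging these into $H_{(k-i)i}\in\Omega^{k-i}(Z,\wedge^i\t^*)$ gives a vector-space isomorphism $\Phi_k:\Omega^k(X)\xrightarrow{\ \sim\ }C_{F_2}^{k,(0,k)}(Z,\t^*)$.

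The second step is to check that $\Phi$ intertwines the de Rham differential $d$ on $X$ with $D_{F_2}$ on the complex (\ref{BHMComplex}). This is the computational heart: one applies $d$ to the expansion above, uses $d\theta^i=\pi^*(F_2)_i$ and $d\pi^*\alpha=\pi^*d\alpha$, and collects terms. Differentiating a base factor produces the $d H_{(k-i)i}$ terms; differentiating one of the $\theta$ factors replaces a $\theta^{j_l}$ by $\pi^*(F_2)_{j_l}$, which after re-expanding $(F_2)_{j_l}$ in the $\theta$-basis — here one needs $(F_2)_{j_l}$ to be a pullback from $Z$, so it contributes only to the $i=0$ slot of that factor — exactly reproduces the $\wedge F_2$ maps of Figure \ref{BHMdifferential}, with the sign $(-1)^{k-i-1}$ coming from moving $d$ (or the replaced factor) past the $(k-i-1)$-form $H$ and the remaining $\theta$'s. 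One must be careful with the ordering conventions and the alternating signs in the definition of $H_{(k-i)i}\wedge F_2$ via the omission $\widehat{X_{j_l}^*}$; matching these against the signs produced by reordering the wedge $\theta^{j_1}\wedge\cdots\wedge\theta^{j_i}$ is precisely what Remark \ref{dependsonF2}'s sign convention is engineered to make work out. Granting this, $\Phi$ is an isomorphism of cochain complexes $(\Omega^*(X),d)\cong(C_{F_2}^{*,(0,*)}(Z,\t^*),D_{F_2})$, hence induces the claimed isomorphism on cohomology.

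The main obstacle is the sign bookkeeping in Step two: verifying $\Phi\circ d=D_{F_2}\circ\Phi$ term by term, especially reconciling (i) the Koszul signs from commuting $d$ through $\pi^*$-factors and $\theta$-factors of various degrees, (ii) the signs $(-1)^{l+1}$ in the definition of $\wedge F_2$, and (iii) the overall $(-1)^{k-i-1}$ factors in $D_{F_2}$. One technical point to settle carefully is that the expansion is basis-independent in the right sense: changing the connection $\theta$ changes $F_2$ within its cohomology class and induces a chain isomorphism between the corresponding complexes, so that $H^{k,(0,k)}_{F_2}(Z,\t^*)$ is well defined up to canonical isomorphism and the statement does not depend on the chosen curvature representative. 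A secondary subtlety is that $\wedge^i\t^*=0$ for $i>n$, so for $k>n$ the top slots are automatically zero and the expansion terminates at $i=\min(k,n)$; this is harmless since it is already built into the definition (\ref{BHMComplex}) of the complex. Everything else — that $\Phi_k$ is bijective, that $D_{F_2}^2=0$ (already given), that $\pi^*$ is injective on forms — is routine.
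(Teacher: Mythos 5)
There is a genuine gap at the very first step: the map $\Phi_k$ you define is \emph{not} a vector-space isomorphism from all of $\Omega^k(X)$ onto $C^{k,(0,k)}_{F_2}(Z,\t^*)$. A general $k$-form on $X$ expands in the connection/horizontal basis with coefficient functions that vary along the fibres, and such coefficients are not pullbacks from $Z$. Already for the trivial bundle over a point ($Z=\mathrm{pt}$, $X=\T^1$) your claim would give $\Omega^0(X)\cong\R$ and $\Omega^1(X)\cong\t^*$, whereas $C^\infty(\T^1)$ is infinite dimensional; the same failure occurs fibrewise over any base. So the asserted chain isomorphism $(\Omega^*(X),d)\cong(C^{*,(0,*)}_{F_2}(Z,\t^*),D_{F_2})$ is false, and with it the "manifestly quasi-isomorphic" conclusion.

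The missing idea is to interpose the subcomplex of $\T^n$-invariant forms. Because the structure group $\R^n/\Z^n$ is compact and connected, averaging over the group gives a projection $\Omega^*(X)\to\Omega^*(X)^{\T^n}$ showing that the inclusion of the invariant subcomplex induces an isomorphism on de Rham cohomology; and it is precisely for \emph{invariant} forms that your expansion is valid, since an invariant form decomposes (via the chosen connection $\theta$) into basic pieces, i.e.\ pullbacks from $Z$, wedged with the $\theta^{j_1}\wedge\dots\wedge\theta^{j_i}$. Your Step two computation ($d\theta^j=\pi^*(F_2)_j$, sign bookkeeping matching $D_{F_2}$) then goes through on this subcomplex and is the computational content of the result. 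This corrected route — invariant forms plus averaging, then the connection-dependent identification with the dimensionally reduced complex — is exactly the "dimensional reduction" argument of \cite[Sect 3.1]{BouHanMat05}, which the present paper simply cites rather than reproves; note also that the resulting isomorphism depends on the choice of connection with curvature $F_2$ and is not canonical, so your closing remark about independence of choices should be weakened accordingly.
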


This isomorphism depends on a choice of principal connection with curvature $F_2$, and therefore is 
not canonical. Moreover, the groups $H^{k,(m,l)}_{F_2}(Z,\t^*)$ fit into a Gysin sequence:
\begin{theorem}[{\cite[Thm 3.2]{BouHanMat05}}]\label{BHMGysin}
There exists an exact sequence
\begin{equation}
\ldots \to H^{k,(m,m)}_{F_2}(Z,\t^*)\stackrel{\pi^*}{\to} H^{k,(m,l)}_{F_2}(Z,\t^*)\stackrel{\pi_*}{\to} 
H^{k,(m+1,l)}_{F_2}(Z,\t^*)\stackrel{\wedge F_2}{\to} H^{k+1,(m,m)}_{F_2}(Z,\t^*)\to \ldots \,,
\end{equation}
where the map $\wedge F_2:H^{k,(m+1,l)}_{F_2}(Z,\t^*)\to H^{k+1,(m,m)}_{F_2}(Z,\t^*)$ is
\[[(H_{(k-m-1)(m+1)},\dots,H_{(k-l)l})]\wedge F_2:= [(-1)^{k-m}H_{(k-m-1)(m+1)}\wedge F_2].\]
\end{theorem}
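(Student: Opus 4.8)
The plan is to recognize that the complex $C_{F_2}^{k,(m,l)}(Z,\t^*)=\bigoplus_{i=m}^l\Omega^{k-i}(Z,\wedge^i\t^*)$ comes with an obvious short exact sequence of cochain complexes
\begin{equation*}
0\to C_{F_2}^{\bullet,(m,m)}(Z,\t^*)\xrightarrow{\pi^*}C_{F_2}^{\bullet,(m,l)}(Z,\t^*)\xrightarrow{\pi_*}C_{F_2}^{\bullet,(m+1,l)}(Z,\t^*)\to 0,
\end{equation*}
where $\pi^*$ is the inclusion of the top summand $\Omega^{k-m}(Z,\wedge^m\t^*)$ (on which $D_{F_2}$ restricts to $d$, since there is no summand in degree $i=m-1$ to receive a $\wedge F_2$ term), and $\pi_*$ is the projection onto the remaining summands $\bigoplus_{i=m+1}^l\Omega^{k-i}(Z,\wedge^i\t^*)$. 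First I would check that both maps are genuine cochain maps: for $\pi^*$ this is immediate because $D_{F_2}$ acts as $d$ on the $i=m$ piece (there is no lower neighbour), and for $\pi_*$ one inspects the formula for $D_{F_2}$ and sees that the only component of $D_{F_2}(H_{(k-m)m},\dots)$ that involves $H_{(k-m)m}$ is the first one, which is exactly the one killed by $\pi_*$; all the remaining components assemble into $D_{F_2}$ on $C_{F_2}^{\bullet,(m+1,l)}$. Exactness of the short sequence in each degree is then tautological.

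Next I would invoke the long exact sequence in cohomology associated to this short exact sequence of complexes, which immediately yields
\begin{equation*}
\ldots\to H^{k,(m,m)}_{F_2}(Z,\t^*)\xrightarrow{\pi^*}H^{k,(m,l)}_{F_2}(Z,\t^*)\xrightarrow{\pi_*}H^{k,(m+1,l)}_{F_2}(Z,\t^*)\xrightarrow{\partial}H^{k+1,(m,m)}_{F_2}(Z,\t^*)\to\ldots
\end{equation*}
The remaining work is to identify the connecting homomorphism $\partial$ with the stated map $\wedge F_2$. For this I would run the usual zig-zag: given a cocycle $(H_{(k-m-1)(m+1)},\dots,H_{(k-l)l})$ in $C_{F_2}^{k,(m+1,l)}$, lift it to $(0,H_{(k-m-1)(m+1)},\dots,H_{(k-l)l})$ in $C_{F_2}^{k,(m,l)}$, apply $D_{F_2}$, and read off the component in $C_{F_2}^{k+1,(m,m)}=\Omega^{k+1-m}(Z,\wedge^m\t^*)$. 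By the definition of $D_{F_2}$ that component is precisely $d(0)+(-1)^{k-m-1}H_{(k-m-1)(m+1)}\wedge F_2=(-1)^{k-m-1}H_{(k-m-1)(m+1)}\wedge F_2$, which — up to the harmless overall sign, matching the $(-1)^{k-m}$ in the statement after one checks the degree bookkeeping of the lift (the lifted cochain sits in total degree $k$ but its image under $D_{F_2}$ is computed with the shifted index) — is exactly the asserted formula. I would also remark that well-definedness of this $\wedge F_2$ map on cohomology is automatic from its being the connecting map, so no separate verification is needed.

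The main obstacle, such as it is, is purely bookkeeping: getting the sign conventions in $D_{F_2}$ to match the claimed sign $(-1)^{k-m}$ in the connecting map, and being careful that the index $k$ in $C_{F_2}^{k,(m,l)}$ refers to total degree while the differential shifts the form-degree and Lie-algebra-degree in opposite directions. I would handle this by writing out the $l=m+1$ case explicitly (a two-term complex $\Omega^{k-m}(Z,\wedge^m\t^*)\oplus\Omega^{k-m-1}(Z,\wedge^{m+1}\t^*)$) where the zig-zag is completely transparent, and then noting that the general case differs only by carrying along the untouched tail $(H_{(k-m-2)(m+2)},\dots,H_{(k-l)l})$, which plays no role in the connecting map. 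Everything else — that $D_{F_2}^2=0$ so the cohomology groups are defined, that $F_2(X^*)\wedge F_2(Y^*)$ is symmetric — is already granted in the text preceding the statement.
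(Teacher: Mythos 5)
This theorem is quoted from \cite{BouHanMat05} and the present paper offers no proof of its own; your argument --- the short exact sequence of complexes $0\to C_{F_2}^{\bullet,(m,m)}\to C_{F_2}^{\bullet,(m,l)}\to C_{F_2}^{\bullet,(m+1,l)}\to 0$, its long exact cohomology sequence, and the zig-zag identification of the connecting map --- is the standard proof and is essentially the one in that reference, so it is correct. One clarification on the sign you gloss over: the zig-zag gives exactly $(-1)^{k-m-1}H_{(k-m-1)(m+1)}\wedge F_2$, i.e.\ the negative of the stated $(-1)^{k-m}H_{(k-m-1)(m+1)}\wedge F_2$; this really is harmless, not because of any ``shifted index'' bookkeeping in the lift, but simply because replacing a map by its negative changes neither its kernel nor its image, so exactness with the stated map follows (the residual sign discrepancy is of the same nature as the convention differences with \cite{BouHanMat05} flagged in Remark \ref{dependsonF2}).
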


This sequence is called a Gysin sequence because, in the case $m=0$ and $l=k$, the sequence 
corresponding to a principal $\T$-bundle $X\to Z$ is identical to the image in de Rham cohomology 
of (\ref{circlegysin}).

Suppose then that we have a T-duality pair $(c,\delta)\in \check{H}^2(Z,\underline{\Z}^n)\oplus 
\check{H}^2(X,\underline{\Z})$ such that that $c$ and $\delta$ have images $[F_2]\in 
H^2_{dR}(Z,\mathfrak{t})$ and $[H]\in H^3_{dR}(X)$ under the compositions
\begin{align*}
&\check{H}^2(Z,\underline{\Z}^n)\to \check{H}^2(Z,\underline{\R}^n)\to H^2_{dR}(Z,\mathfrak{t})\,,\\
&\check{H}^3(Z,\underline{\Z})\to \check{H}^3(Z,\underline{\R})\to H^3_{dR}(Z) \,,
\end{align*}
respectively. We then let $[H_3^{dR},H_2^{dR},H_1^{dR},H_0^{dR}]\in 
H^{3,(0,3)}_{F_2}(Z,\mathfrak{t}^*)$ 
be the image of $[H]$ under the isomorphism from Theorem \ref{ChernWeil}, and by 
Equation (\ref{bhmgysin1}) 
the \emph{T-dual curvature} is the class $[H_2^{dR},H_1^{dR},H_0^{dR}]\in 
H^{3,(1,3)}_{F_2}(Z,\mathfrak{t}^*)$. 
Notice that \cite{BouHanMat05} does not provide a dual Gysin sequence in the case 
$H_1^{dR}\neq 0,H_0^{dR}\neq 0$ (i.e.\ an analogue of (\ref{circlegysindual})). 
Thus, the T-dual H-flux, which should be computed by (\ref{bhmgysin2}) and (\ref{bhmgysin3}), 
strictly speaking, is  unknown, although we 
expect it in some sense to have a representative of the form $(H_3^{dR},F_2,0,0)$.

In the general case it was proposed in \cite{BouHanMat05, BHMb, BHMc} that 
$[H_2^{dR},H_1^{dR},H_0^{dR}]$ defines 
a ``nonassociative torus bundle", and in the case $H_0^{dR}=0$, the class $[H_2^{dR},H_1^{dR}]\in 
H^{3,(1,2)}_{F_2}(Z,\mathfrak{t}^*)$ defines a noncommutative torus bundle \cite{MatRos05, MatRos06}. 
The links 
between nonassociative/noncommutative bundles and the tuples of differential forms 
was however not made 
precise. Moreover, since this previous work was done in de Rham cohomology, it neglected the 
phenomena of torsion. In this, and a companion paper \cite{BouCarRat2} (see also \cite{Rat}
for more details and \cite{BouCarRat1} 
for a review and additional examples), we shall rectify some of 
these details in the case $H_0=0$ by constructing an \emph{integer} cohomology group analogous to 
$H^{3,(1,3)}_{F_2}(Z,\mathfrak{t}^*)$, and linking more directly the class $[H_2^{dR},H_1^{dR}]$ to a 
noncommutative torus bundle.

\section{Dimensionally Reduced \v{C}ech Cohomology}

With the previous section as motivation, we shall proceed as follows. First, we shall construct groups 
analogous to $H^{k,(m,l)}_{F_2}(Z,\t^*)$ using \v{C}ech cochains over a good open cover $\mathcal{W}$ 
of $Z$. Instead of using the curvature form $F_2$, we shall use a representative 
$F\in\check{Z}^2(\W,\underline{\Z}^n)$ of the Euler vector of $\pi:X\to Z$. Due the applications 
we have in mind, we shall restrict to the case where $m=0$ and $l=2$, and only construct an 
analogue of $H^{k,(0,2)}_{F_2}(Z,\t^*)$. It is in fact possible to show, with great difficulty, that the 
resulting cohomology groups are, up to isomorphism, independent of the choice of representative of 
$[F]\in \check{H}^2(\W,\underline{\Z}^n)$. For a proof of this fact, the reader should consult \cite{Rat}. 
Note however that the isomorphism given in \cite{Rat} is not canonical. Moreover, it is possible to show 
that when considering refinements, even in the case where different refinement maps induce 
\emph{identical} 
representatives of the Euler vector, the induced refinement maps have images that differ by a non-trivial 
automorphism. It follows that taking a colimit will be inappropriate for our applications. Thus, we are forced 
to fix a cover when using these groups here and in \cite{BouCarRat2}. Finally, we shall provide Gysin 
sequences for our \v{C}ech cohomology groups, and show they match with the Gysin sequence of 
\cite{BouHanMat05}. Note that the analogue of Theorem \ref{ChernWeil} is the 
primary focus of \cite{BouCarRat2}.

The main obstruction to constructing a direct \v{C}ech analogue of $H^{k,(m,l)}_{F_2}(Z,\t^*)$ is the 
fact that the cup product of \v{C}ech cocycles is not commutative. Therefore, simply abstracting the 
differential $D_{F_2}$ by using the cup product with $F$ instead of the wedge product with $F_2$ will 
not give a map that squares to zero (cf. the comments preceeding Example \ref{l=2example}). We shall 
surmount this problem by using the fact that the cup product is commutative on 
\v{C}ech \emph{cohomology}, 
which implies that commuted cocycles differ by a coboundary. This coboundary shall be incorporated 
into our analogue of $D_{F_2}$, giving a genuine differential.

\begin{lemma}[{\cite[Sect 2]{Ste47}}]\label{commutecoboudary}
Let $Z$ be a topological space with an open cover $\W$ and $A,B\in \check{Z}^2(\W,\underline{\Z})$. Then
\[A\cup B-B\cup A=\check\partial C\,, \]
where
\[C_{\lambda_0\lambda_1\lambda_2\lambda_3}(z):= A_{\lambda_0\lambda_1\lambda_2}(z)
B_{\lambda_0\lambda_2\lambda_3}(z)-A_{\lambda_1\lambda_2\lambda_3}
(z)B_{\lambda_0\lambda_1\lambda_3}(z).\]
\end{lemma}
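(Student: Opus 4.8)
The plan is to verify this by a direct \v{C}ech cochain computation, following the classical argument of Steenrod. First I would recall the formula for the \v{C}ech cup product of two $2$-cochains $A,B\in\check{C}^2(\W,\underline{\Z})$: on a quadruple overlap $U_{\lambda_0\lambda_1\lambda_2\lambda_3}$ one has $(A\cup B)_{\lambda_0\lambda_1\lambda_2\lambda_3}=A_{\lambda_0\lambda_1\lambda_2}\,B_{\lambda_2\lambda_3\lambda_4}$ — wait, more precisely $(A\cup B)_{\lambda_0\lambda_1\lambda_2\lambda_3}=A_{\lambda_0\lambda_1\lambda_2}B_{\lambda_2\lambda_3}$ in the appropriate indexing; I would fix the sign and indexing conventions the paper uses for $\cup$ and $\check\partial$ before anything else, since the whole computation is convention-sensitive. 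Then the strategy is: compute $\check\partial C$ term by term using the definition of the coboundary operator on $3$-cochains, $(\check\partial C)_{\lambda_0\cdots\lambda_4}=\sum_{i=0}^{4}(-1)^i C_{\lambda_0\cdots\widehat{\lambda_i}\cdots\lambda_4}$, substitute the given expression for $C$, and watch the ten resulting terms collapse — using the cocycle conditions $\check\partial A=0$ and $\check\partial B=0$ — down to exactly $(A\cup B)_{\lambda_0\cdots\lambda_3}-(B\cup A)_{\lambda_0\cdots\lambda_3}$.

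The key steps in order: (1) expand $(\check\partial C)_{\lambda_0\lambda_1\lambda_2\lambda_3\lambda_4}$ into its five alternating terms, each of which is itself a difference of two products by the definition of $C$, giving ten monomials in the entries of $A$ and $B$; (2) group these ten monomials according to which one of $A,B$ they involve and which indices survive; (3) in each group, recognize a partial evaluation of $\check\partial A$ or $\check\partial B$ against a fixed entry of the other cochain — for instance, terms of the form $(\text{entry of }A)\cdot\bigl[\text{alternating sum of }B\text{'s}\bigr]$ where the bracket is (a truncation of) $(\check\partial B)_{\cdots}=0$; (4) discard all such vanishing groups and collect the genuinely leftover terms, which should be precisely $A_{\lambda_0\lambda_1\lambda_2}B_{\lambda_2\lambda_3\lambda_4}-B_{\lambda_0\lambda_1\lambda_2}A_{\lambda_2\lambda_3\lambda_4}$, i.e.\ $(A\cup B-B\cup A)_{\lambda_0\cdots\lambda_4}$. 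Since all cochains take values in the abelian group $\underline{\Z}$ (or $\underline{\Z}^n$ componentwise, or more generally any ring where the relevant products make sense), there are no commutativity subtleties in the coefficients themselves, and the entire computation is purely combinatorial bookkeeping of signs.

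The main obstacle I expect is purely organizational rather than conceptual: making the ten-term expansion line up correctly with the two cocycle relations requires care with the alternating signs, and it is easy to be off by an overall sign or to mis-split a term between the ``$\check\partial A$ against $B$'' group and the ``$\check\partial B$ against $A$'' group. A clean way to manage this is to write $\check\partial C$ as a sum over pairs and organize the monomials in a table indexed by (omitted index $i$, which summand of $C$), then read off columns. One should also double-check the degenerate boundary cases — the terms where omitting $\lambda_i$ makes two of the remaining indices in $C$'s formula coincide with the ``seam'' index $\lambda_2$ in the definition of $C$ — since those are exactly the terms that do \emph{not} cancel and instead produce the cup products. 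Because the statement is quoted from \cite{Ste47}, I would present the computation tersely, remark that it is the standard proof that the \v{C}ech cup product is graded-commutative on cohomology specialized to degree $(2,2)$, and note that it extends verbatim to coefficients in $\underline{\Z}^n$ by applying it in each coordinate.
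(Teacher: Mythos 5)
Your plan is correct and is essentially the argument the paper relies on by citing Steenrod (the paper gives no proof of its own): the standard verification is exactly your direct expansion of $(\check\partial C)_{\lambda_0\cdots\lambda_4}$ into ten monomials, which regroup via the cocycle conditions $\check\partial A=\check\partial B=0$ to leave precisely $A_{\lambda_0\lambda_1\lambda_2}B_{\lambda_2\lambda_3\lambda_4}-B_{\lambda_0\lambda_1\lambda_2}A_{\lambda_2\lambda_3\lambda_4}$, and the bookkeeping does go through. Just repair the degree slips in your opening lines: $A\cup B$ is a $4$-cochain, so the identity is checked on five indices $\lambda_0,\dots,\lambda_4$ with $(A\cup B)_{\lambda_0\cdots\lambda_4}=A_{\lambda_0\lambda_1\lambda_2}B_{\lambda_2\lambda_3\lambda_4}$, as you correctly state only in step (4).
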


Let $Z$ be a $C^\infty$ manifold and fix an open cover $\mathcal{W}=\{W_{\mu_0}\}$ of $Z$ together 
with a cocycle $F\in \check{Z}^2(\mathcal{W},\underline{\Z}^n)$. Let $l\in\Z$, and denote by 
$\mathcal{T}^{l}$ the sheaf of germs of continuous $\T^{n\choose l}$-valued functions. I.e.\
we can think of $\mathcal T^0$ and $\mathcal T^1$ as the sheafs $\mathcal S$ and $\hat{\mathcal N}$
of germs of continuous $\T$ and $\hat\Z^n$-valued functions, respectively.  Similarly, if $M_n^u(\T)$ 
denotes the group (under addition) of strictly upper triangular $n\times n$ matrices, we can view 
$\mathcal M:=\mathcal{T}^{2}$ as the sheaf of germs of continuous $M_n^u(\T)$-valued 
functions. Therefore 
we can think of a \v{C}ech cocycle  $\phi^{(k-2)2}\in \check{C}^{k-2}(\W,\mathcal{T}^{2})$ 
(respectively $\phi^{(k-1)1}\in\check{C}^{k-1}(\W,\mathcal{T}^{1})$) as an $n \choose 2$-tuple
\[\{\phi^{(k-2)2}(\cdot)_{ij}\}_{1\leq i<j\leq n},\]
where $\phi^{(k-2)2}(\cdot)_{ij}\in \check{C}^{k-2}(\W,\mathcal{T}^{0})$  (respectively, an $n$-tuple 
$\{\phi^{(k-1)1}(\cdot)_{i}\}_{1\leq i\leq n}$, with $\phi^{(k-1)1}(\cdot)_{i}\in 
\check{C}^{k-1}(\W,\mathcal{T}^{0})$).

We define a cochain complex $C^k_F(\mathcal{W},\mathcal{S})$, where for $k\geq 2$ an element 
is a triple $(\phi^{k0},\phi^{(k-1)1},\phi^{(k-2)2})$ consisting of \v{C}ech cochains $\phi^{k0}\in 
\check{C}^{k}(\W,\mathcal{T}^{0})$, $\phi^{(k-1)1}\in \check{C}^{k-1}(\W,\mathcal{T}^{1})$ and 
$\phi^{(k-2)2}\in \check{C}^{k-2}(\W,\mathcal{T}^{2})$. When $k=1$, we define a cochain 
to be a pair $(\phi^{10},\phi^{01
})$, where $\phi^{10}\in \check{C}^{1}(\W,\mathcal{T}^{0})$, $\phi^{01}\in 
\check{C}^{0}(\W,\mathcal{T}^{1})$, 
whilst when $k=0$ a cochain is a singleton $(\phi^{00})$, for $\phi^{00}\in 
\check{C}^{0}(\W,\mathcal{T}^{0})$.

Now, for any $A\in \check{C}^2(\W,\underline{\Z}^n)$ and $B\in \check{C}^3(\W,\underline{M_n^u(\Z)})$ 
we can define products
\begin{align*}
\cup_1 A&: \check{C}^{k-1}(\W,\mathcal{T}^1)\to \check{C}^{k+1}(\W,\mathcal{T}^0),\\
\cup_1 A&: \check{C}^{k-2}(\W,\mathcal{T}^2) \to \check{C}^{k}(\W,\mathcal{T}^1),\quad \mbox{and}\\
\cup_2 B&: \check{C}^{k-2}(\W,\mathcal{T}^2) \to \check{C}^{k+1}(\W,\mathcal{T}^0),
\end{align*}
with the formulas
\begin{align*}
&(\phi^{(k-1)1}\cup_1 A)_{\lambda_0\dots\lambda_{k+1}}(z):=
\prod_{1\leq l \leq n}\phi^{(k-1)1}_{\lambda_0\dots\lambda_{k-1}}(z)_l^{A_{\lambda_{k-1}\lambda_k
\lambda_{k+1}}(z)_l},\\
&(\phi^{(k-2)2}\cup_1 A)_{\lambda_0\dots\lambda_{k+1}}(z)_l:=\prod_{1\leq i<j\leq n}
\phi^{(k-2)2}_{\lambda_0\dots
\lambda_{k-1}}(z)_{ij}^{A_{\lambda_{k-1}\lambda_k\lambda_{k+1}}(z)_i(e_l)_j-
(e_l)_iA_{\lambda_{k-1}\lambda_k
\lambda_{k+1}}(z)_j},\\
&(\phi^{(k-2)2}\cup_2 B)_{\lambda_0\dots\lambda_{k+1}}(z):=\prod_{1\leq i<j\leq n}
\phi^{(k-2)2}_{\lambda_0
\dots\lambda_{k-2}}(z)_{ij}^{B_{\lambda_{k-2}\lambda_{k-1}\lambda_{k}\lambda_{k+1}}(z)_{ij}}.
\end{align*}
Note that the ordinary \v{C}ech differential $\check\partial$ is a graded 
derivation with respect to these products:
\begin{lemma}\label{gradedderivation}
Let $\phi^{(k-1)1}\in \check{C}^{k-1}(\W,{\mathcal{T}}^1)$, $\phi^{(k-2)2}\in 
\check{C}^{k}(\W,\mathcal{T}^2)$, 
$A\in \check{C}^2(\W,\underline{\Z}^n)$ and $B\in \check{C}^3(\W,\underline{M_n^u(\Z)})$ 
be arbitrary cochains. Then
\begin{align*}
&\check\partial(\phi^{(k-1)1}\cup_1 A)=(\check\partial\phi^{(k-1)1})\cup_1 A\times\left[\phi^{(k-1)1}\cup_1 
(\check\partial A)\right]^{(-1)^{k-1}},\\
&\check\partial(\phi^{(k-2)2}\cup_1 A)=(\check\partial\phi^{(k-2)2})\cup_1 A\times\left[\phi^{(k-2)2}\cup_1 
(\check\partial A)\right]^{(-1)^{k-2}},\\
&\check\partial(\phi^{(k-2)2}\cup_2 B)=(\check\partial\phi^{(k-2)2})\cup_2 B\times\left[\phi^{(k-2)2}\cup_2 
(\check\partial B)\right]^{(-1)^{k-2}}.
\end{align*}
\end{lemma}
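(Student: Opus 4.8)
The plan is to recognise each of the three products $\cup_1 A$, $\cup_1 A$ and $\cup_2 B$ as the ordinary \v{C}ech cup product attached to a biadditive pairing of coefficient sheaves, and then to obtain the three identities from the single classical Leibniz rule for such a cup product, rewritten in multiplicative notation. Concretely, each product has the form $(\phi\cup\psi)_{\lambda_0\dots\lambda_{p+q}}=\mu\big(\phi_{\lambda_0\dots\lambda_p},\psi_{\lambda_p\dots\lambda_{p+q}}\big)$ (all restricted to the overlap $W_{\lambda_0}\cap\dots\cap W_{\lambda_{p+q}}$), where $\psi$ is the fixed cochain $A$ or $B$ and $\mu$ is: for $\cup_1 A$ on $\mathcal{T}^1$-valued cochains, the map $\mathcal{T}^1\times\underline{\Z}^n\to\mathcal{T}^0$, $\big((t_l),(a_l)\big)\mapsto\prod_l t_l^{a_l}$; for $\cup_1 A$ on $\mathcal{T}^2$-valued cochains, the map $\mathcal{T}^2\times\underline{\Z}^n\to\mathcal{T}^1$ with $l$-th component $\big((\phi_{ij}),(a_i)\big)\mapsto\prod_{i<j}\phi_{ij}^{a_i(e_l)_j-(e_l)_i a_j}$; and for $\cup_2 B$, the map $\mathcal{T}^2\times\underline{M_n^u(\Z)}\to\mathcal{T}^0$, $\big((\phi_{ij}),(b_{ij})\big)\mapsto\prod_{i<j}\phi_{ij}^{b_{ij}}$. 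The one thing to verify here is that each $\mu$ is a homomorphism in each of its two variables separately (the first written multiplicatively, the second additively); reading this off the displayed formulas is routine, and is the only step where the exact shape of the exponents matters.

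Next I would record the Leibniz rule
\[\check\partial(\phi\cup\psi)=\big((\check\partial\phi)\cup\psi\big)\cdot\big(\phi\cup(\check\partial\psi)\big)^{(-1)^{\deg\phi}}\]
for a cup product built from a biadditive $\mu$, and prove it by the standard face count. Expanding $(\check\partial(\phi\cup\psi))_{\lambda_0\dots\lambda_{p+q+1}}$ as the alternating product over omitted indices $\lambda_i$, the omissions with $i\le p$ fix the block $\lambda_{p+1}\dots\lambda_{p+q+1}$ and, by biadditivity of $\mu$ in the first slot, collect into $\mu$ of $\psi_{\lambda_{p+1}\dots\lambda_{p+q+1}}$ against all but the last face of $\phi_{\lambda_0\dots\lambda_{p+1}}$; the omissions with $i\ge p+1$ fix the block $\lambda_0\dots\lambda_p$ and collect, by biadditivity in the second slot, into $\mu$ of $\phi_{\lambda_0\dots\lambda_p}$ against all but the first face of $\psi_{\lambda_p\dots\lambda_{p+q+1}}$, raised to the power $(-1)^p$. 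Completing these two partial products to the full $\check\partial\phi$ and $\check\partial\psi$ introduces two correction factors $\mu(\phi_{\lambda_0\dots\lambda_p},\psi_{\lambda_{p+1}\dots\lambda_{p+q+1}})^{(-1)^p}$ and $\mu(\phi_{\lambda_0\dots\lambda_p},\psi_{\lambda_{p+1}\dots\lambda_{p+q+1}})^{-(-1)^p}$, which cancel, and what remains is precisely $\big((\check\partial\phi)\cup\psi\big)\cdot\big(\phi\cup(\check\partial\psi)\big)^{(-1)^p}$. Specialising $\psi$ to $A$ (degree $2$) or $B$ (degree $3$), and $\deg\phi$ to $k-1$ in the first identity and $k-2$ in the other two, then yields the three displayed equations, the exponents $(-1)^{k-1}$ and $(-1)^{k-2}$ in the statement being exactly $(-1)^{\deg\phi}$.

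I do not expect a genuine obstacle here; the entire content is combinatorial bookkeeping. The one point that needs care is the passage to multiplicative notation, in which the familiar ``$+(-1)^{\deg\phi}$'' of the Leibniz rule becomes an exponent on the second factor, and the sign convention for $\check\partial$ has to be tracked so that the two middle-index correction terms cancel rather than add. It is also worth stating in the write-up that nothing in the argument uses a cocycle condition on $A$ or $B$, nor commutativity of the cup product, so the identities hold for arbitrary cochains $A\in\check{C}^2(\W,\underline{\Z}^n)$ and $B\in\check{C}^3(\W,\underline{M_n^u(\Z)})$, as asserted.
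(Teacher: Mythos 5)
Your proposal is correct: each of the three products is indeed the standard front-face/back-face \v{C}ech cup product attached to a biadditive pairing of coefficient sheaves (written multiplicatively in the exponential cases), and the usual face-count Leibniz argument with the two cancelling middle-index correction terms gives exactly the stated identities with exponents $(-1)^{\deg\phi}$. This is essentially the same route as the paper, which dismisses the lemma as ``a simple computation''; your write-up just organizes that computation through the general cup-product Leibniz rule, which is fine.
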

\begin{proof}
A simple computation.
\end{proof}

Now, as usual let $F_i$ denote the $i^{th}$ component of $F$, our fixed representative of the 
Euler vector of $\pi:X\to Z$. Applying Lemma \ref{commutecoboudary} with $A=F_i, B=F_j$ gives 
us a 3-cochain $C(F)\in \check{C}^3(\W,\underline{M_n^u(\Z)})$ defined by the formula:
\[C(F)_{\lambda_0\lambda_1\lambda_2\lambda_3}(z)_{ij}:=F_{\lambda_0\lambda_1\lambda_2}(z)_i
F_{\lambda_0\lambda_2\lambda_3}(z)_j-F_{\lambda_1\lambda_2\lambda_3}
(z)_iF_{\lambda_0\lambda_1\lambda_3}(z)_j.\]
Then, we define a map $D_F: C^k_F(\mathcal{W},\mathcal{S})\to C^{k+1}_F(\mathcal{W},\mathcal{S})$ by
\begin{align}
\label{dfdefn}&D_F(\phi^{k0}, \phi^{(k-1)1},\phi^{(k-2)2}):=\\
&(\check\partial\phi^{k0}\times(\phi^{(k-1)1}\cup_1 F)^{(-1)^{k+1}}\times(\phi^{(k-2)2}
\cup_2 C(F)))^{(-1)^{k+1}},\notag\\
&\quad\check\partial\phi^{(k-1)1}\times(\phi^{(k-2)2}\cup_1 F)^{(-1)^{k}},\check\partial\phi^{(k-2)2})\notag.
\end{align}
which one can see is a differential using Lemma \ref{gradedderivation}.
\begin{figure}
\centerline{\xymatrix{
&&&\\
&&&\\
\phi^{k0}\ar[uu]^{\check\partial}&&&&\\
&&\\
&&\phi^{(k-1)1}\ar[uu]^(0.4){\check\partial}\ar[uuuull]^{\cup_1 F}&&\\
&&&&\\
&&&&\phi^{(k-2)2}\ar[uu]^{\check\partial}\ar[uuuull]_{\cup_1 F}\ar[uuuuuullll]_(0.75){\cup_2 C(F)}
}}
\caption{The differential $D_{F}$}
\label{Mydifferential}
\end{figure}

\begin{defn}
We define the \emph{$k^{th}$ dimensionally reduced \v{C}ech cohomology group of the covering 
$\W$ with coefficients in $\mathcal{S}$} to be the cohomology of $C^k_F(\mathcal{W},\mathcal{S})$  
under the differential $D_F$. This group is denoted $\mathbb{H}^k_F(\mathcal{W},\mathcal{S})$.
\end{defn}

We can also define a similar groups, $\mathbb{H}^k_F(\W,\underline{\Z})$ and 
$\mathbb{H}^k_F(\W,\mathcal{R})$, 
using integer  and real coefficients. We begin with integer coefficients. 
Cochains in $C^k_F(\W,\underline{\Z})$ 
are triples $(\phi^{k0},\phi^{(k-1)1},\phi^{(k-2)2})$, consisting of a \v{C}ech cochains 
$\phi^{k0}\in \check{C}^{k}(\W,\underline{\Z})$, $\phi^{(k-1)1}\in \check{C}^{k-1}(\W,\underline{\Z}^n)$ 
and $\phi^{(k-2)2}\in \check{C}^{k-2}(\W,\underline{M_n^u(\Z)})$. We define degree 0 and 1 cochains 
as before, by truncating the lower \v{C}ech cochains. To define the differential, let $m_l$ denote the $l^{th}$ 
component of $m\in\Z^n$.
Then we have maps
\begin{align*}
\cup_1 F&: \check{C}^{k-1}(\W,\underline{\Z}^n)\to \check{C}^{k+1}(\W,\underline{\Z}),\\
\cup_1 F&: \check{C}^{k-2}(\W,\underline{M_n^u(\Z)}) \to \check{C}^{k}(\W,\underline{\Z}^n), 
\quad \mbox{and}\\
\cup_2 C(F)&: \check{C}^{k-2}(\W,\underline{M_n^u(\Z)}) \to \check{C}^{k+1}(\W,\underline{\Z}),
\end{align*}
with their integer cohomology analogues:
\begin{align}
\label{cupformula2} & (\phi^{(k-1)1}\cup_1 F)_{\lambda_0\dots\lambda_{k+1}}(z)  :=
\sum_{l=1}^n\phi^{(k-1)1}_{\lambda_0\dots\lambda_{k-1}}(z)_l 
F_{\lambda_{k-1}\lambda_k\lambda_{k+1}}(z)_l\,,\\
& (\phi^{(k-2)2}\cup_1 F)_{\lambda_0\dots\lambda_{k}}(z)_l  :=  \notag \\ \qquad & 
\label{cupformula1} \sum_{1\leq i<j\leq n}\phi^{(k-2)2}_{\lambda_0
\dots\lambda_{k-2}}(z)_{ij}(F_{\lambda_{k-2}\lambda_{k-1}\lambda_{k}}(z)_i(e_l)_j-(e_l)_i
F_{\lambda_{k-2}\lambda_{k-1}\lambda_{k}}(z)_j)\,,  \\
& (\phi^{(k-2)2}\cup_2 C(F))_{\lambda_0\dots\lambda_{k+1}}(z)  :=\sum_{1\leq i<j\leq n}
\phi^{(k-2)2}_{\lambda_0\dots\lambda_{k-2}}(z)_{ij} C(F)_{\lambda_{k-2}\lambda_{k-1}\lambda_{k}\lambda_{k+1}}(z)_{ij},\notag
\end{align}
The formulas (\ref{cupformula1}) and (\ref{cupformula2}) should be compared with (\ref{wedgef1}) 
and (\ref{wedgef2}), respectively. Then the differential is
\begin{align*}
&D_F(\phi^{k0}, \phi^{(k-1)1},\phi^{(k-2)2}):=\\
&(\check\partial\phi^{k0}+(-1)^{k+1}\phi^{(k-1)1}\cup_1 F+(-1)^{k+1}\phi^{(k-2)2}\cup_2 C(F),\\
&\quad\check\partial\phi^{(k-1)1}+(-1)^{k}\phi^{(k-2)2}\cup_1 F,\check\partial\phi^{(k-2)2}).
\end{align*}

\begin{defn}
Let $\W$ be an open cover of a $C^\infty$ manifold $Z$, and fix a cocycle $F\in\check{Z}^2(\W,\underline{\Z}^n)$. 
We define the \emph{$k^{th}$ dimensionally reduced \v{C}ech cohomology group of the cover $\W$ with 
coefficients in $\Z$} to be the cohomology of $C^k_F(\mathcal{W},\underline{\Z})$  under the differential $D_F$. 
This group is denoted
$\mathbb{H}^k_F(\W,\underline{\Z})$.
\end{defn}

The definition of the real coefficient groups $\mathbb{H}^k_F(\W,\mathcal{R})$ are identical, except 
obviously that cochains take values in a different group. Thus, cochains in $C^k_F(\W,\mathcal{R})$ 
are triples $(\phi^{k0},\phi^{(k-1)1},\phi^{(k-2)2})$ consisting of a \v{C}ech cochains $\phi^{k0}\in 
\check{C}^{k}(\W,\mathcal{R})$, $\phi^{(k-1)1}\in \check{C}^{k-1}(\W,\mathcal{R}^n)$ and $\phi^{(k-2)2}\in 
\check{C}^{k-2}(\W,\mathcal{M}(\mathcal{R})),$ where $\mathcal{R}$ denotes the sheaf of germs of 
continuous $\R$-valued functions, and $\mathcal{M}(\mathcal{R})$ denotes the sheaf of germs of 
continuous $M_n^u(\R)$-valued functions.

\begin{prop}\label{cohomologylongexactsequence}
Let $\W$ be a good open cover of a $C^\infty$ manifold $Z$, and fix a cocycle $F\in \check{Z}^2(\W,\underline{\Z}^n)$. 
Then there is a long exact sequence of cohomology groups
\[\to\mathbb{H}^k_F(\W,\mathcal{R})\to\mathbb{H}^k_F(\W,\mathcal{S})\to 
\mathbb{H}^{k+1}_F(\W,\underline{\Z})\to \mathbb{H}^{k+1}_F(\W,\mathcal{R})\to\]
\end{prop}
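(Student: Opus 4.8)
The plan is to exhibit the three dimensionally reduced complexes as fitting into a short exact sequence of cochain complexes and then invoke the resulting long exact sequence in cohomology. First I would observe that for a good open cover $\W$ the exponential sequence of sheaves $0\to\underline{\Z}\to\mathcal{R}\to\mathcal{S}\to 0$ (and its obvious analogues $0\to\underline{\Z}^n\to\mathcal{R}^n\to\mathcal{T}^1\to 0$ and $0\to\underline{M_n^u(\Z)}\to\mathcal{M}(\mathcal{R})\to\mathcal{T}^2\to 0$) induces, at the level of \v{C}ech cochain groups over $\W$, short exact sequences
\[
0\to\check{C}^{p}(\W,\underline{\Z})\to\check{C}^{p}(\W,\mathcal{R})\to\check{C}^{p}(\W,\mathcal{S})\to 0,
\]
because $\check{C}^p(\W,-)$ is exact (it is just a product of sections, and $\mathcal{R}\to\mathcal{S}$ is surjective on sections over the contractible, hence acyclic, finite intersections of a good cover — here one uses that $\R\to\T$ is surjective with kernel $\Z$). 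Taking the direct sum of the three such sequences in the appropriate degrees $(k,0)$, $(k-1,1)$, $(k-2,2)$ yields, for each $k$, a short exact sequence of abelian groups
\[
0\to C^k_F(\W,\underline{\Z})\to C^k_F(\W,\mathcal{R})\to C^k_F(\W,\mathcal{S})\to 0.
\]

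Next I would check that these maps are chain maps for the differentials $D_F$. This is the only place the twisting enters, and it is essentially formal: the components of $D_F$ are built from the ordinary \v{C}ech differential $\check\partial$ (which is manifestly natural in the coefficient sheaf) together with the operations $\cup_1 F$ and $\cup_2 C(F)$, whose formulas (\ref{cupformula1}), (\ref{cupformula2}) and the displayed formula for $\cup_2 C(F)$ only involve multiplying a cochain value by an \emph{integer} coming from $F$ or $C(F)$. Since the sheaf maps $\underline{\Z}\hookrightarrow\mathcal{R}\twoheadrightarrow\mathcal{S}$ (and their matrix/vector analogues) are homomorphisms of $\Z$-modules, they commute with ``raise to an integer power'' / ``multiply by an integer'', and hence with $\cup_1 F$ and $\cup_2 C(F)$; compatibility with the sign conventions in (\ref{dfdefn}) is immediate. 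Therefore the two inclusions/quotient maps above assemble into morphisms of cochain complexes $(C^\bullet_F(\W,\underline{\Z}),D_F)\to(C^\bullet_F(\W,\mathcal{R}),D_F)\to(C^\bullet_F(\W,\mathcal{S}),D_F)$ forming a short exact sequence of complexes.

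The long exact sequence in cohomology is then the standard zig-zag long exact sequence associated to this short exact sequence of complexes, with connecting homomorphism $\mathbb{H}^k_F(\W,\mathcal{S})\to\mathbb{H}^{k+1}_F(\W,\underline{\Z})$ defined by the usual snake-lemma diagram chase: lift a $D_F$-closed triple from $C^k_F(\W,\mathcal{S})$ to $C^k_F(\W,\mathcal{R})$, apply $D_F$, and observe the result lands in the image of $C^{k+1}_F(\W,\underline{\Z})$. One should note the low-degree truncations ($k=0,1$) are compatible with these maps since the truncation is performed identically for all three coefficient systems, so the short exact sequence of complexes — and hence the long exact sequence — persists in every degree. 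The main obstacle, such as it is, is the verification that the coefficient maps are $D_F$-chain maps; but since $D_F$ differs from $\check\partial$ only by $\Z$-linear ``cup with an integer cocycle'' terms, this is routine rather than deep. I would flag that no claim is made here about independence of the cover $\W$ or the representative $F$ (that is deferred to \cite{Rat}), so the statement is purely at the level of a fixed $\W$ and fixed $F$, which is exactly what the short-exact-sequence-of-complexes argument delivers.
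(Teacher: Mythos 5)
Your proposal is correct and is essentially the paper's own argument: the paper likewise obtains the short exact sequence of cochain complexes $0\to C^\bullet_F(\W,\underline{\Z})\to C^\bullet_F(\W,\mathcal{R})\to C^\bullet_F(\W,\mathcal{S})\to 0$ from goodness of the cover and then cites the standard long exact sequence in cohomology. Your extra verifications (surjectivity on sections over contractible intersections, that $\cup_1 F$ and $\cup_2 C(F)$ are $\Z$-linear and hence compatible with the coefficient maps, and the low-degree truncations) are exactly the routine details the paper leaves implicit.
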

\begin{proof}
Since the cover $\W$ is good we have an exact sequence of cochain complexes
\[0\to C^\bullet_F(\W,\underline{\Z})\to C^\bullet_F(\W,\mathcal{R})\to C^\bullet_F(\W,\mathcal{S})\to 0.\]
The exact sequence in cohomology then follows from standard homological algebraic techniques 
(see, for example, \cite[Lemma 4.31]{RaeWill98}).
\end{proof}

As one familiar with \v{C}ech cohomology might expect, most of the groups $\mathbb{H}^k_F(\W,\mathcal{R})$ 
are trivial. Given that we are fixing our covers and not allowing a locally finite 
refinement, to prove this fact we  need a definition and a lemma.

\begin{defn}
Let $\W=\{W_{\lambda}\}_{\lambda\in\mathcal{I}}$ be a (not necessarily locally finite) open cover of a 
paracompact space $Z$. Then we call a collection of functions $\{\rho_{\lambda}\}_{\lambda\in\mathcal{I}}$ 
with $\rho_\lambda\in C^\infty(Z,\R)$ a \emph{partition of unity subordinate to} $\W$ if and only if the collection 
satisfies the following conditions:
\begin{itemize}
\item[(1)] For all $\lambda\in\mathcal{I},\operatorname{range}\rho_\lambda\subset [0,1]$.
\item[(2)] For all $\lambda\in\mathcal{I},\operatorname{supp}\rho_\lambda\subset W_{\lambda}$.
\item[(3)] For all $z\in Z$, there are only finitely many $\rho_\lambda$ such that $\rho_\lambda(z)\neq 0$.
\item[(4)] For all $z\in Z$, $\sum_\lambda\rho_\lambda(z)=1$.
\end{itemize}
\end{defn}

\begin{lemma}\label{PartitionExistence}
Every open cover $\W=\{W_{\lambda}\}_{\lambda\in\mathcal{I}}$ of a $C^\infty$-manifold $Z$ has a partition 
of unity subordinate to it.
\end{lemma}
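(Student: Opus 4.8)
The plan is to reduce Lemma~\ref{PartitionExistence} to the standard result that a smooth manifold admits smooth partitions of unity subordinate to \emph{any} open cover, the only subtlety being that our definition drops the local finiteness requirement on the index set $\mathcal{I}$ while retaining conditions (1)--(4). I would first invoke the classical theorem (see, e.g., any differential geometry text such as Warner or Lee): since $Z$ is a $C^\infty$-manifold it is, by assumption here, paracompact and second countable (or at least paracompact), so the cover $\W=\{W_\lambda\}_{\lambda\in\mathcal{I}}$ admits a locally finite refinement $\{V_\alpha\}_{\alpha\in A}$ together with a smooth partition of unity $\{\psi_\alpha\}_{\alpha\in A}$ with $\operatorname{supp}\psi_\alpha\subset V_\alpha$ and $\operatorname{range}\psi_\alpha\subset[0,1]$ and $\sum_\alpha\psi_\alpha\equiv 1$ (the sum being locally finite).

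Next I would re-index this partition onto $\mathcal{I}$. Since $\{V_\alpha\}$ refines $\W$, fix a refinement map $r:A\to\mathcal{I}$ with $V_\alpha\subset W_{r(\alpha)}$ for each $\alpha$, and define
\[
\rho_\lambda:=\sum_{\alpha\in r^{-1}(\lambda)}\psi_\alpha\,.
\]
This sum is locally finite (it is a subsum of the locally finite family $\{\psi_\alpha\}$), hence $\rho_\lambda$ is smooth and $[0,1]$-valued; condition~(3) for the $\rho_\lambda$ follows because near any point only finitely many $\psi_\alpha$ are nonzero, so only finitely many indices $\lambda$ can have $\rho_\lambda$ nonzero there; condition~(4) follows by rearranging the (locally finite) sum $\sum_\lambda\rho_\lambda=\sum_\alpha\psi_\alpha\equiv 1$. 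The support condition~(2), $\operatorname{supp}\rho_\lambda\subset W_\lambda$, needs a small argument: by local finiteness the union $\bigcup_{\alpha\in r^{-1}(\lambda)}\operatorname{supp}\psi_\alpha$ is closed, and it is contained in $\bigcup_{\alpha\in r^{-1}(\lambda)}V_\alpha\subset W_\lambda$; since $\{\rho_\lambda\neq 0\}\subset\bigcup_{\alpha\in r^{-1}(\lambda)}\{\psi_\alpha\neq 0\}$, taking closures gives $\operatorname{supp}\rho_\lambda\subset\bigcup_{\alpha\in r^{-1}(\lambda)}\operatorname{supp}\psi_\alpha\subset W_\lambda$.

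The main obstacle — and it is a minor one — is purely bookkeeping: one must be careful that the re-indexed family $\{\rho_\lambda\}_{\lambda\in\mathcal{I}}$ can now be indexed by the (possibly uncountable, not locally finite) set $\mathcal{I}$ without violating (3), which is exactly why the closedness of $\bigcup_{\alpha\in r^{-1}(\lambda)}\operatorname{supp}\psi_\alpha$ under local finiteness is the step to state carefully. No new analytic input is required beyond the classical existence theorem and the elementary fact that a locally finite union of closed sets is closed. I would present this in three or four lines, citing the standard partition-of-unity theorem and then performing the re-indexing and verifying (1)--(4).
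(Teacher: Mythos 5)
Your proposal is correct and follows essentially the same route as the paper: take a locally finite refinement with its subordinate partition of unity and push it forward along the refinement map via $\rho_\lambda=\sum_{\iota(\mu)=\lambda}\varrho_\mu$. Your extra care in verifying the support condition (using that a locally finite union of closed sets is closed) simply makes explicit a check the paper leaves to the reader.
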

\begin{proof}
Since $Z$ is paracompact, there exists a locally finite refinement $\mathcal{V}=\{V_{\mu}\}_{\mu\in\mathcal{J}}$ 
of $\W=\{W_{\lambda}\}_{\lambda\in\mathcal{I}}$ that comes with a refinement map $\iota:\mathcal{J}\to\mathcal{I}$ 
satisfying $V_{\mu}\subset W_{\iota(\mu)}$. By standard results, there is a partition of unity 
$\{\varrho_{\mu}\}_{\mu\in\mathcal{J}}$ subordinate to $\mathcal{V}$. We then define a partition of 
unity $\{\rho_{\lambda}\}_{\lambda\in\mathcal{I}}$ subordinate to $\W$ with the formula
\[\rho_\lambda(z):=\sum_{\iota(\mu)=\lambda}\varrho_{\mu}(z).\]
\end{proof}

\begin{lemma}\label{Rcompute}
Let $\W$ be an open cover of of a $C^\infty$ manifold $Z$, and fix a cocycle 
$F\in \check{Z}^2(\W,\underline{\Z})$. Then we have group isomorphisms
\[\mathbb{H}^k_{F}(\W,\mathcal{R})\cong\begin{cases}
C(Z,\R) & k=0\\
C(Z,\R^n)& k=1\\
C(Z,M_n^u(\R))& k=2\\
0& k\geq 3.
\end{cases}\]
\end{lemma}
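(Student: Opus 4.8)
The plan is to compute the cohomology of the complex $C^\bullet_F(\W,\mathcal{R})$ directly, using the fact that the constant sheaf resolutions are not needed — instead we exploit that $\W$ need not be good here, and that the sheaves $\mathcal{R}$, $\mathcal{R}^n$, $\mathcal{M}(\mathcal{R})$ are \emph{fine} (they admit partitions of unity, by Lemma \ref{PartitionExistence}), so the ordinary \v{C}ech differential $\check\partial$ has trivial cohomology in positive degree. The key structural observation is that $D_F$ is an upper-triangular perturbation of the three ordinary \v{C}ech differentials acting on $\check{C}^\bullet(\W,\mathcal{R})$, $\check{C}^{\bullet-1}(\W,\mathcal{R}^n)$ and $\check{C}^{\bullet-2}(\W,\mathcal{M}(\mathcal{R}))$; the off-diagonal terms are the cup products $\cup_1 F$ and $\cup_2 C(F)$. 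So the natural tool is the spectral sequence of the filtration by the three ``columns'' $\phi^{k0},\phi^{(k-1)1},\phi^{(k-2)2}$ (filtration by $i$ in the notation of \eqref{BHMComplex}); its $E_1$-page is the $\check\partial$-cohomology of each column.

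First I would set up that filtration and identify the $E_1$-term. By the partition-of-unity argument (the standard proof that fine sheaves are \v{C}ech-acyclic, adapted to the non-locally-finite cover using Lemma \ref{PartitionExistence} to build the contracting homotopy), each of $\check{C}^\bullet(\W,\mathcal{R})$, $\check{C}^\bullet(\W,\mathcal{R}^n)$, $\check{C}^\bullet(\W,\mathcal{M}(\mathcal{R}))$ has cohomology concentrated in degree $0$, namely $C(Z,\R)$, $C(Z,\R^n)$, $C(Z,M_n^u(\R))$ respectively. Here I should be careful that the global-sections functor gives \emph{continuous} functions on all of $Z$; this is where the statement's $C(Z,-)$ comes from, and it uses that the $0$-cocycle condition for these sheaves says exactly that local data patch to a global continuous function. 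Hence $E_1$ is supported in the three bidegrees corresponding to total degrees $0,1,2$, with the stated groups sitting there and zeros elsewhere — in particular $E_1^{p,q}=0$ whenever the total degree exceeds $2$, which already yields $\mathbb{H}^k_F(\W,\mathcal{R})=0$ for $k\ge 3$.

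Next I would check that all higher differentials on the spectral sequence vanish for degree reasons, so the sequence degenerates at $E_1$. The $d_1$ differential would go from the column-$i$ cohomology in \v{C}ech degree $0$ to the column-$(i-1)$ cohomology in \v{C}ech degree $1$ (via $\cup_1 F$), but the target is $0$ since positive-degree \v{C}ech cohomology of these fine sheaves vanishes; similarly $d_2$ (the $\cup_2 C(F)$ term) lands in a vanishing group. Therefore $\mathbb{H}^k_F(\W,\mathcal{R})$ is just the surviving $E_1$-term in total degree $k$, giving exactly $C(Z,\R)$, $C(Z,\R^n)$, $C(Z,M_n^u(\R))$ for $k=0,1,2$ and $0$ otherwise. (Alternatively one can avoid spectral-sequence language entirely: solve $D_F(\phi)=0$ column by column from the bottom, using \v{C}ech-acyclicity of the fine sheaves to successively trivialize $\phi^{(k-2)2}$, then $\phi^{(k-1)1}$, then $\phi^{k0}$ up to $D_F$-coboundaries — this is a diagram chase through Figure \ref{Mydifferential}.)

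The main obstacle I anticipate is bookkeeping rather than conceptual: verifying carefully that the perturbation terms $\cup_1 F$ and $\cup_2 C(F)$ respect the filtration (they strictly lower $i$, so they do), and — more delicately — handling the truncation of the complex in degrees $k=0,1$ where the triple degenerates to a pair or a singleton. In those low degrees one must confirm by hand that the cocycle/coboundary computation still produces precisely $C(Z,\R)$ and $C(Z,\R^n)$ and that no spurious classes appear from the absent lower \v{C}ech cochains. I would treat $k=0,1,2$ as explicit base cases and only invoke the spectral sequence / acyclicity machinery for the uniform vanishing in degrees $k\ge 3$.
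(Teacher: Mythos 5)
Your argument is correct, and it rests on exactly the same two inputs as the paper's own proof: the partition of unity subordinate to the fixed (possibly non-locally-finite) cover supplied by Lemma \ref{PartitionExistence}, and the resulting contracting homotopy $h_k\phi_{\lambda_0\dots\lambda_{k-1}}=\sum_\lambda\rho_\lambda\phi_{\lambda\lambda_0\dots\lambda_{k-1}}$, which shows each column $\check{C}^\bullet(\W,\mathcal{R})$, $\check{C}^\bullet(\W,\mathcal{R}^n)$, $\check{C}^\bullet(\W,\mathcal{M}(\mathcal{R}))$ is $\check\partial$-acyclic in positive degrees with degree-zero cohomology the global sections $C(Z,\R)$, $C(Z,\R^n)$, $C(Z,M_n^u(\R))$. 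The difference is packaging: the paper does not set up a spectral sequence but uses $h_k$ to normalize a cocycle column by column; e.g.\ for $k=2$ every cocycle is shown cohomologous to $\bigl(h_3(-h_2(\phi^{02}\cup_1F)\cup_1F-\phi^{02}\cup_2C(F)),\,h_2(\phi^{02}\cup_1F),\,\phi^{02}\bigr)$, so the isomorphism is literally $[\phi^{20},\phi^{11},\phi^{02}]\mapsto\phi^{02}$ --- which is precisely the ``alternative'' column-by-column chase you mention at the end. Your spectral-sequence formulation of the column filtration buys a uniform statement and immediate vanishing for $k\geq3$ (the filtration has length three, so convergence is automatic, and since each total degree carries at most one nonzero graded piece there is no extension problem); the paper's explicit homotopy buys a concrete normal form and an explicit formula for the isomorphism, which is what is convenient downstream (e.g.\ in Corollary \ref{dimreducedlongexact}). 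Your worry about the truncation in degrees $k=0,1$ is a non-issue, since the truncation is nothing but the restriction of each column to nonnegative \v{C}ech degree. One bookkeeping slip, harmless to the argument: $\cup_1F$ raises \v{C}ech degree by $2$, so the differential out of the \v{C}ech-degree-$0$ part of a column lands in \v{C}ech degree $2$ (not $1$) of the adjacent column; the target is still positive-degree cohomology of a fine sheaf over $\W$, hence zero.
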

\begin{proof}
We only provide a proof of these facts for $k= 2$, since the other cases admit a similar proof. 
First recall that in real cohomology there is a contracting homotopy 
$h_k:\check{C}^k(\W,\mathcal{R})\to \check{C}^{k-1}(\W,\mathcal{R})$ defined with the assistance of a 
partition of unity $\{\rho_{\lambda}\}$ (that exists by Lemma \ref{PartitionExistence}) given by 
$h_k\phi_{\lambda_0\dots\lambda_{k-1}}=\sum_{\lambda}\rho_\lambda\phi_{\lambda\lambda_0\dots\lambda_{k-1}}$. 
Being a contracting homotopy, $\{h_k\}$ satisfies the identity $\operatorname{id}=\check\partial h_k+h_{k-1}\check\partial$. 
Now let us consider the case $k=2$. Consider a cocycle $(\phi^{20},\phi^{11},\phi^{02})$. This is 
cohomologically equivalent to
\begin{align*}
(\phi^{20},\phi^{11},\phi^{02}) +D_F(0, -h_1\phi^{11})& =(\phi^{20},\phi^{11},\phi^{02})+(-h_1\phi^{11}\cup_1
F, -\check\partial h_1\phi^{11},0)\\
=&(\phi^{20}-h_1\phi^{11}\cup_1F,\phi^{11}-\check\partial h_1\phi^{11},\phi^{02})\\
=&(\phi^{20}-h_1\phi^{11}\cup_1F,h_2\check\partial\phi^{11},\phi^{02})\\
=&(\phi^{20}-h_1\phi^{11}\cup_1F,h_2(\phi^{02}\cup_1 F),\phi^{02}).
\end{align*}
Let us define $\tilde{\phi}^{20}:=\phi^{20}-h_1\phi^{11}\cup_1F$, and, because $D_F$ is a differential, we can infer
\[\check\partial \tilde{\phi}^{20}=-h_2(\phi^{02}\cup_1 F)\cup_1 F-\phi^{02}\cup_2C(F).\]
Using the above facts we know $(\phi^{20},\phi^{11},\phi^{02})$ is cohomologically equivalent to
\begin{align*}
(\tilde{\phi}^{20},h_2(\phi^{02}\cup_1 F),\phi^{02})& +D_F(-h_2\tilde{\phi}^{02},0)
=(\tilde{\phi}^{20},h_2(\phi^{02}\cup_1 F),\phi^{02})+(-\check\partial h_2\tilde{\phi}^{20},0,0)\\
=&(h_3\check\partial \tilde{\phi}^{20},h_2(\phi^{02}\cup_1 F),\phi^{02})\\
=&(h_3(-h_2(\phi^{02}\cup_1 F)\cup_1 F-\phi^{02}\cup_2C(F)),h_2(\phi^{02}\cup_1 F),\phi^{02}).
\end{align*}
Thus, every cocycle $(\phi^{20},\phi^{11},\phi^{02})$ in $\mathbb{H}^2_F(\W,\mathcal{R})$ is equivalent to one of the form
\[(h_3(-h_2(\phi^{02}\cup_1 F)\cup_1 F-\phi^{02}\cup_2C(F)),h_2(\phi^{02}\cup_1 F),\phi^{02}),\]
and the isomorphism $\mathbb{H}^2_F(\W,\mathcal{R})\to C(Z,M_n^u(\R))$ is given by
\[[(\phi^{20},\phi^{11},\phi^{02})]\mapsto \phi^{02}.\]
\end{proof}

\begin{cor}\label{dimreducedlongexact}
Let $\W$ be a good open cover of a $C^\infty$ manifold $Z$, and fix a cocycle 
$F\in \check{Z}^2(\W,\underline{\Z}^n)$. Then we have exact sequences
\begin{align*}
0&\to C(Z,\Z)\to C(Z,\R)\to C(Z,\T)\to  \mathbb{H}^1_{F}(\W,\underline{\Z})\\
&\quad\to C(Z,\R^n)\to \mathbb{H}^1_{F}(\W,\mathcal{S})\to \mathbb{H}^2_{F}(\W,\underline{\Z})\\
&\quad\quad \to C(Z,M_n^u(\R))\to \mathbb{H}^2_{F}(\W,\mathcal{S})\to \mathbb{H}^3_{F}(\W,\underline{\Z})\to 0,\\
\end{align*}
and
\[0\to \mathbb{H}^k_{F}(\W,\mathcal{S})\to \mathbb{H}^{k+1}_{F}(\W,\underline{\Z})\to 0,\quad k\geq 3.\]
\end{cor}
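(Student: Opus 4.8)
The plan is to feed the two exact sequences of Proposition~\ref{cohomologylongexactsequence} through the computation of Lemma~\ref{Rcompute}, exactly as one deduces the ordinary Gysin/Bockstein-type sequences from a short exact sequence of sheaves. First I would write out the long exact sequence
\[
\cdots\to\mathbb{H}^k_F(\W,\mathcal{R})\to\mathbb{H}^k_F(\W,\mathcal{S})\to\mathbb{H}^{k+1}_F(\W,\underline{\Z})\to\mathbb{H}^{k+1}_F(\W,\mathcal{R})\to\cdots
\]
and substitute the values from Lemma~\ref{Rcompute}: $\mathbb{H}^0_F(\W,\mathcal{R})=C(Z,\R)$, $\mathbb{H}^1_F(\W,\mathcal{R})=C(Z,\R^n)$, $\mathbb{H}^2_F(\W,\mathcal{R})=C(Z,M_n^u(\R))$, and $\mathbb{H}^k_F(\W,\mathcal{R})=0$ for $k\geq 3$. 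The vanishing for $k\geq 3$ immediately breaks the sequence into the short pieces $0\to\mathbb{H}^k_F(\W,\mathcal{S})\to\mathbb{H}^{k+1}_F(\W,\underline{\Z})\to 0$ for $k\geq 3$, giving the second displayed claim, since the incoming term $\mathbb{H}^{k+1}_F(\W,\mathcal{R})$ and the outgoing term $\mathbb{H}^{k+1}_F(\W,\mathcal{R})$ (in degree $\geq 3$) are both zero.

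For the first displayed sequence I would simply splice together the low-degree portion of the long exact sequence from $k=0$ up to $k=3$. Concretely, the tail $\mathbb{H}^3_F(\W,\mathcal{R})=0$ forces surjectivity of $\mathbb{H}^2_F(\W,\mathcal{S})\to\mathbb{H}^3_F(\W,\underline{\Z})$ and the further map $\mathbb{H}^3_F(\W,\underline{\Z})\to\mathbb{H}^3_F(\W,\mathcal{R})=0$ then gives the $\to 0$ on the right. At the left end, $\mathbb{H}^0_F(\W,\underline{\Z})$ is, by definition of the complex in degree $0$, just $C(Z,\Z)$ (the degree-$0$ differential is the ordinary \v{C}ech differential on $\check{C}^0(\W,\underline{\Z})$, whose kernel is the global sections), and similarly $\mathbb{H}^0_F(\W,\mathcal{R})=C(Z,\R)$ by Lemma~\ref{Rcompute}; the map $\mathbb{H}^{-1}\to\mathbb{H}^0_F(\W,\underline{\Z})$ is zero, which yields the injectivity $0\to C(Z,\Z)$. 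The remaining arrows $C(Z,\Z)\to C(Z,\R)\to C(Z,\T)$ are the maps induced on $\mathbb{H}^0$ by the inclusions $\underline{\Z}\hookrightarrow\mathcal{R}\twoheadrightarrow\mathcal{S}$, which at the level of global sections are the obvious inclusion and quotient; one checks these are precisely the connecting maps of the short exact sequence of complexes, and exactness at each of these three spots is just exactness of $0\to C(Z,\Z)\to C(Z,\R)\to C(Z,\T)\to 0$ combined with the long exact sequence.

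The only points requiring a short verification are the identifications in low degree: that $\mathbb{H}^0_F(\W,\underline{\Z})\cong C(Z,\Z)$, that the map $\mathbb{H}^1_F(\W,\mathcal{R})\to\mathbb{H}^1_F(\W,\mathcal{S})$ appearing in the sequence really has source $C(Z,\R^n)$ (immediate from Lemma~\ref{Rcompute} with $k=1$), and that the connecting homomorphism $C(Z,\T)\to\mathbb{H}^1_F(\W,\underline{\Z})$ and the later connecting maps $\mathbb{H}^k_F(\W,\mathcal{S})\to\mathbb{H}^{k+1}_F(\W,\underline{\Z})$ are the standard ones; all of this is formal from the snake lemma applied to $0\to C^\bullet_F(\W,\underline{\Z})\to C^\bullet_F(\W,\mathcal{R})\to C^\bullet_F(\W,\mathcal{S})\to 0$, which we already have from Proposition~\ref{cohomologylongexactsequence}. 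I do not anticipate any real obstacle here: the corollary is a purely mechanical consequence of Proposition~\ref{cohomologylongexactsequence} and Lemma~\ref{Rcompute}, and the ``main step'' is nothing more than bookkeeping the degrees in the long exact sequence and invoking the vanishing $\mathbb{H}^{\geq 3}_F(\W,\mathcal{R})=0$ to truncate and split it.
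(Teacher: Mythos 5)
Your proposal is correct and is exactly the argument the paper intends: the corollary is the low-degree truncation of the long exact sequence of Proposition~\ref{cohomologylongexactsequence} after substituting the computations of Lemma~\ref{Rcompute} (together with the elementary identifications $\mathbb{H}^0_F(\W,\underline{\Z})\cong C(Z,\Z)$ and $\mathbb{H}^0_F(\W,\mathcal{S})\cong C(Z,\T)$), with the vanishing $\mathbb{H}^{\geq 3}_F(\W,\mathcal{R})=0$ giving both the terminal $\to 0$ and the isomorphisms in degrees $k\geq 3$. No gaps.
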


\begin{cor}\label{firstcommute}
Let $\W$ be a good open cover of a $C^\infty$ manifold $Z$, and fix a cocycle 
$F\in\check{Z}^2(\W,\underline{\Z}^n)$. Let $\pi^*: \check{H}^{k}(\W,\mathcal{S})\to \mathbb{H}^{k}_{F}(\W,\mathcal{S})$
be the map
\[\pi^*:[\phi]\mapsto [\phi,1,1]\]
and define $\pi^*: \check{H}^{k}(\W,\underline{\Z})\to \mathbb{H}^{k}_{F}(\W,\underline{\Z})$ and 
$\pi^*: \check{H}^{k}(\W,\mathcal{R})\to \mathbb{H}^{k}_{F}(\W,\mathcal{R})$ similarly. Then there exists a 
commutative diagram with exact rows

\centerline{\xymatrix{
\ar[r]&\check{H}^k(\W,\mathcal{R})\ar[r]\ar[d]^{\pi^*}&\check{H}^k(\W,\mathcal{S})\ar[d]^{\pi^*}\ar[r]&
\check{H}^{k+1}(\W,\underline{\Z})\ar[d]^{\pi^*}\ar[r]&\check{H}^{k+1}(\W,\mathcal{R})\ar[d]^{\pi^*}\ar[r]&\\
\ar[r]&\mathbb{H}^k_F(\W,\mathcal{R})\ar[r]&\mathbb{H}^k_F(\W,\mathcal{S})\ar[r]&\ar[r] 
\mathbb{H}^{k+1}_F(\W,\underline{\Z})\ar[r]&\mathbb{H}^{k+1}_F(\W,\mathcal{R})\ar[r]&
}}
\end{cor}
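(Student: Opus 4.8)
The plan is to establish the commutative diagram by constructing, at the level of cochain complexes, a morphism of the short exact sequences underlying the two long exact sequences, and then invoking naturality of the connecting homomorphism. Concretely, I would first observe that the map $\pi^*\colon \check C^k(\W,\mathcal{A})\to C^k_F(\W,\mathcal{A})$ sending a \v{C}ech cochain $\phi$ to the truncated triple $(\phi,1,1)$ (additively, $(\phi,0,0)$, or the singleton $(\phi)$ in low degrees) is defined simultaneously for $\mathcal{A}=\underline{\Z},\mathcal{R},\mathcal{S}$, and that it commutes with the ordinary inclusions and quotient maps $0\to\check C^\bullet(\W,\underline{\Z})\to\check C^\bullet(\W,\mathcal{R})\to\check C^\bullet(\W,\mathcal{S})\to 0$. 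Thus we obtain a commutative ladder of short exact sequences of cochain complexes with vertical maps $\pi^*$.

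The key verification is that each $\pi^*$ is a cochain map, i.e.\ that $D_F\circ\pi^*=\pi^*\circ\check\partial$. Applying $D_F$ to $(\phi,1,1)$, from the defining formula (\ref{dfdefn}) (and its additive analogue), the $\cup_1 F$ and $\cup_2 C(F)$ terms all vanish because they take the trivial cochains $1$ (resp.\ $0$) in the $\phi^{(k-1)1}$ and $\phi^{(k-2)2}$ slots as inputs; what remains is $(\check\partial\phi,1,1)$, which is exactly $\pi^*(\check\partial\phi)$. Hence $\pi^*$ descends to cohomology, giving the vertical maps in the statement, and one must also check compatibility with the truncated differentials in degrees $0$ and $1$, which is immediate from the definitions. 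Passing to the associated long exact sequences in cohomology, the snake/zig-zag lemma produces two long exact sequences — the top one is the standard coefficient sequence $\to\check H^k(\W,\mathcal{R})\to\check H^k(\W,\mathcal{S})\to\check H^{k+1}(\W,\underline{\Z})\to$, the bottom one is the sequence of Proposition~\ref{cohomologylongexactsequence} — and naturality of the connecting homomorphism with respect to the morphism of short exact sequences yields commutativity of every square, including the ones involving $\check H^{k+1}(\W,\underline{\Z})\to \mathbb{H}^{k+1}_F(\W,\underline{\Z})$.

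I expect the only mildly delicate point to be bookkeeping at the low-degree edge ($k=0,1$), where the cochain groups $C^k_F$ are defined by truncation rather than by the full triple, so one must confirm that $\pi^*$ still lands in the right truncated groups and still commutes with the modified differentials there; this is routine but needs to be stated. Everything else is a formal consequence of the fact that $\pi^*$ is a morphism of short exact sequences of complexes, so there is no serious obstacle — the content is entirely in the (trivial) cochain-level computation that the higher-slot terms of $D_F$ kill cochains of the form $(\phi,1,1)$.
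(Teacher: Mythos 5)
Your proposal is correct, and its only substantive ingredient — that $D_F(\phi,1,1)=(\check\partial\phi,1,1)$ because the $\cup_1 F$ and $\cup_2 C(F)$ terms are fed only the trivial cochains, so $\pi^*$ is a cochain map compatible with the coefficient sequences $\underline{\Z}\to\mathcal{R}\to\mathcal{S}$ — is exactly the fact the paper's proof rests on. The packaging, however, differs slightly from the paper. The paper dismisses all squares except the one involving the connecting homomorphism with the remark that ``the definitions are set up so that this works,'' and then verifies that remaining square by a direct chase: lift $\phi\in\check{C}^k(\W,\mathcal{S})$ to $\hat\phi\in\check{C}^k(\W,\mathcal{R})$ (possible since $\W$ is good), note that the top connecting map sends $[\phi]$ to $[\check\partial\hat\phi]$ and hence clockwise to $[\check\partial\hat\phi,0,0]$, and check that going anticlockwise through $[\phi,1,1]$ produces the same class. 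You instead promote $\pi^*$ to a morphism of the two short exact sequences of cochain complexes (the ordinary \v{C}ech one and the one in Proposition \ref{cohomologylongexactsequence}) and invoke naturality of the connecting homomorphism. Your route is more systematic — it handles all squares uniformly and makes explicit the chain-map verification that the paper leaves implicit — at the cost of having to confirm the low-degree truncation bookkeeping, which you correctly flag; the paper's chase is shorter precisely because the non-connecting squares commute on the nose at the level of representatives. Either way the proof goes through, and your version is, if anything, a cleaner justification of the paper's opening sentence.
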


\begin{proof}
The definitions are set up so that this works, so we only prove commutativity of the square
\centerline{\xymatrix{
\check{H}^k(\W,\mathcal{S})\ar[d]^{\pi^*}\ar[r]&\check{H}^{k+1}(\W,\underline{\Z})\ar[d]^{\pi^*}\\
\mathbb{H}^k_F(\W,\mathcal{S})\ar[r]& \mathbb{H}^{k+1}_F(\W,\underline{\Z}).
}}

\noindent Fix an element $[\phi]\in \check{H}^k(\W,\mathcal{S})$. Since $\W$ is good, there exists a 
cochain $\hat{\phi}\in \check{C}^k(\W,\mathcal{R})$ such that $[\hat{\phi}]_{\R/\Z}=\phi$. Then the 
image of $[\phi]$ under $\check{H}^k(\W,\mathcal{S})\to \check{H}^{k+1}(\W,\underline{\Z})$ is 
$[\check\partial\hat{\phi}]$. The image of this under $\pi^*:H^{k+1}(\W,\underline{\Z})\to 
\mathbb{H}^{k+1}_F(\W,\underline{\Z})$ is $[\check\partial \hat{\phi},0,0]$.

Going anticlockwise on the other hand, the image of $[\phi]$ in $\mathbb{H}^k_F(\W,\mathcal{S})$ is $[\phi,1,1]$. 
This has image $[\check\partial\hat{\phi},0,0]$ in $\mathbb{H}^{k+1}_F(\W,\underline{\Z})$.
\end{proof}

\section{Curvature Groups}

There is a group related to $\mathbb{H}^k_{F}(\W,\mathcal{S})$ that is important to us, 
because later it will be the target of a Gysin ``integration over the fibres" map. Moreover, this 
group provides a curvature class for a certain class of noncommutative torus bundles. 
We define a cochain complex $\overline{C}^k_F(\mathcal{W},\mathcal{S})$, where for $k\geq 1$ 
an element is a pair $(\varphi^{k1},\varphi^{(k-1)2})$ consisting of \v{C}ech cochains $\phi^{k1}\in 
\check{C}^{k}(\W,\hat{\mathcal{N}})$ and $\phi^{(k-1)2}\in \check{C}^{k-1}(\W,\mathcal{M})$. 
A cochain in $\overline{C}^0_F(\mathcal{W},\mathcal{S})$ is given by $(\varphi^{01})$, 
for $\phi^{01}\in \check{C}^{0}(\W,\hat{\mathcal{N}})$. This complex has a differential $\overline{D}_F$ given by
\begin{equation}
\label{overdfdefn}\overline{D}_F(\phi^{k1},\phi^{(k-1)2}):=(\check\partial\phi^{(k-1)1}
\times(\phi^{(k-2)2}\cup_1 F)^{(-1)^{k}},\check\partial\phi^{(k-2)2}).
\end{equation}
\begin{defn}
We define $\overline{\mathbb{H}}^k_F(\mathcal{W},\mathcal{S})$ to be the cohomology of 
$\overline{C}^k_F(\mathcal{W},\mathcal{S})$  under the differential $\overline{D}_F$.
\end{defn}
One can see this group is obtained by removing the first entry from $\overline{C}^{k+1}_{F}(\W,\mathcal{S})$. 
Obviously, one can define the same groups with integer and real coefficients, denoted 
$\overline{\mathbb{H}}^k_F(\mathcal{W},\underline{\Z})$ and $\overline{\mathbb{H}}^k_F(\mathcal{W},\mathcal{R})$ 
respectively. Doing so gives us analogues of Proposition \ref{cohomologylongexactsequence} and 
Lemma \ref{Rcompute}:

\begin{prop}\label{exponentiallong2}
Let $\W$ be a good open cover of a $C^\infty$ manifold $Z$, and fix a cocycle $F\in \check{Z}^2(\W,\underline{\Z}^n)$. 
Then there is a long exact sequence of cohomology groups
\[\ldots\to\overline{\mathbb{H}}^k_F(\W,\mathcal{R})\to\overline{\mathbb{H}}^k_F(\W,\mathcal{S})\to 
\overline{\mathbb{H}}^{k+1}_F(\W,\underline{\Z})\to \overline{\mathbb{H}}^{k+1}_F(\W,\mathcal{R})\to\ldots\]
\end{prop}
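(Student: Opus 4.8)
The plan is to mimic the proof of Proposition \ref{cohomologylongexactsequence} verbatim. The only input needed there was a short exact sequence of cochain complexes arising from the pointwise exponential sequence $0\to\underline{\Z}\to\mathcal{R}\to\mathcal{S}\to 0$ on a good cover, together with the standard homological algebra producing a long exact sequence in cohomology. So the first step is to observe that the complexes $\overline{C}^\bullet_F(\W,-)$ are built out of the \v{C}ech complexes $\check{C}^\bullet(\W,\hat{\mathcal{N}})$ and $\check{C}^\bullet(\W,\mathcal{M})$ (with integer analogues $\underline{\Z}^n$ and $\underline{M_n^u(\Z)}$, and real analogues $\mathcal{R}^n$ and $\mathcal{M}(\mathcal{R})$) exactly as $C^\bullet_F(\W,-)$ was built from three such pieces; the only difference is that the top piece $\phi^{k0}$ has been deleted.

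Concretely, I would first record that, because $\W$ is good and each of $\T^n$, $\hat\Z^n \cong \Z^n$ and $M_n^u(\T)$, $M_n^u(\Z)$ fits in a pointwise exponential sequence $0\to\Z^{\binom n l}\to\R^{\binom n l}\to\T^{\binom n l}\to 0$ (for $l=1,2$), the rows of \v{C}ech cochain groups give short exact sequences $0\to\check{C}^k(\W,\underline{\Z}^n)\to\check{C}^k(\W,\mathcal{R}^n)\to\check{C}^k(\W,\hat{\mathcal{N}})\to 0$ and likewise with $M_n^u$. Assembling these degreewise yields a short exact sequence of cochain complexes
\[0\to \overline{C}^\bullet_F(\W,\underline{\Z})\to \overline{C}^\bullet_F(\W,\mathcal{R})\to \overline{C}^\bullet_F(\W,\mathcal{S})\to 0.\]
The maps here are the evident coefficient maps applied entry-by-entry. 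One must check that these maps commute with the differential $\overline{D}_F$; this is immediate because $\overline{D}_F$ is assembled from $\check\partial$ (which obviously commutes with coefficient maps) and from $\cup_1 F$, and the latter is defined by the same integer-linear formula (\ref{cupformula1}) in all three coefficient systems — the coefficient maps are ring-compatible, so $\cup_1 F$ is natural in the coefficients. Exactness of the resulting sequence of complexes in each degree is inherited from exactness of the underlying \v{C}ech rows, which in turn follows from goodness of $\W$ (equivalently, surjectivity of $\R\to\T$ pointwise together with the fact that a good cover computes sheaf cohomology so there is no obstruction at the cochain level — in fact the cochain-level rows are split exact as abelian groups).

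Given this short exact sequence of complexes, the long exact sequence in cohomology is produced by the snake lemma / zig-zag construction, exactly as cited in the proof of Proposition \ref{cohomologylongexactsequence} (\cite[Lemma 4.31]{RaeWill98}), giving
\[\ldots\to\overline{\mathbb{H}}^k_F(\W,\mathcal{R})\to\overline{\mathbb{H}}^k_F(\W,\mathcal{S})\to \overline{\mathbb{H}}^{k+1}_F(\W,\underline{\Z})\to \overline{\mathbb{H}}^{k+1}_F(\W,\mathcal{R})\to\ldots,\]
with the connecting map realized concretely by: lift a $\mathcal{S}$-cocycle to a real cochain entrywise, apply $\overline{D}_F$, and observe the result lands in the integer subcomplex. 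There is essentially no obstacle here; the one point that merits a sentence of care is verifying that $\overline{D}_F$ is compatible with the coefficient inclusions/quotients — i.e. that the diagram of complexes genuinely commutes — which reduces to the observation already used above that $\cup_1 F$ is given by a single formula with integer coefficients and is therefore natural. I would simply write: ``The proof is identical to that of Proposition \ref{cohomologylongexactsequence}, using the short exact sequence of complexes $0\to \overline{C}^\bullet_F(\W,\underline{\Z})\to \overline{C}^\bullet_F(\W,\mathcal{R})\to \overline{C}^\bullet_F(\W,\mathcal{S})\to 0$ coming from the exponential sequence and the naturality of $\cup_1 F$ in the coefficients, followed by \cite[Lemma 4.31]{RaeWill98}.''
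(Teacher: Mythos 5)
Your proposal is correct and is exactly the paper's route: the paper proves this by noting it is the analogue of Proposition \ref{cohomologylongexactsequence}, i.e.\ the short exact sequence of complexes $0\to \overline{C}^\bullet_F(\W,\underline{\Z})\to \overline{C}^\bullet_F(\W,\mathcal{R})\to \overline{C}^\bullet_F(\W,\mathcal{S})\to 0$ (using goodness of $\W$ for surjectivity at the cochain level) followed by the standard long exact sequence of \cite[Lemma 4.31]{RaeWill98}. Your extra remark on the naturality of $\cup_1 F$ in the coefficients is the right compatibility check and matches what the paper leaves implicit.
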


\begin{lemma}
Let $\W$ be an open cover of of a $C^\infty$ manifold $Z$, and fix a cocycle 
$F\in \check{Z}^2(\W,\underline{\Z})$. Then we have group isomorphisms
\[\overline{\mathbb{H}}^k_{F}(\W,\mathcal{R})\cong\begin{cases}
C(Z,\R^n)& k=0\\
C(Z,M_n^u(\R))& k=1\\
0& k\geq 2.
\end{cases}\]
\end{lemma}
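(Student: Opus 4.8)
The plan is to mimic the proof of Lemma \ref{Rcompute} verbatim, using the contracting homotopy $h_k$ on ordinary \v{C}ech cohomology with real coefficients (which exists by Lemma \ref{PartitionExistence}) to kill, one by one, the \v{C}ech-cohomology classes of the entries of a cocycle, starting from the top. The complex $\overline{C}^\bullet_F(\W,\mathcal{R})$ is obtained from $C^\bullet_F(\W,\mathcal{R})$ by deleting the $\phi^{k0}$-entry and reindexing, so a cocycle $(\varphi^{k1},\varphi^{(k-1)2})$ satisfies $\check\partial\varphi^{(k-1)2}=0$ and $\check\partial\varphi^{k1}=(-1)^{k+1}\varphi^{(k-1)2}\cup_1 F$. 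First I would show that for $k\geq 2$ every cocycle is a coboundary: since $\check\partial\varphi^{(k-1)2}=0$ and $k-1\geq 1$, write $\varphi^{(k-1)2}=\check\partial h_{k-1}\varphi^{(k-1)2}$ and subtract $\overline{D}_F\bigl(\mp h_k?,-h_{k-1}\varphi^{(k-1)2}\bigr)$ to reduce to a cocycle of the form $(\tilde\varphi^{k1},0)$; but then $\check\partial\tilde\varphi^{k1}=0$ with $k\geq 2$, so $\tilde\varphi^{k1}=\check\partial h_k\tilde\varphi^{k1}$ is itself a coboundary. This gives $\overline{\mathbb{H}}^k_F(\W,\mathcal{R})=0$ for $k\geq 2$.

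For $k=1$ the argument is the same reduction but it cannot be completed: a cocycle $(\varphi^{11},\varphi^{02})$ has $\check\partial\varphi^{02}=0$ with $\varphi^{02}\in\check{C}^0$, so $\varphi^{02}$ is a global section, i.e.\ a function $Z\to M_n^u(\R)$, and it need not be a coboundary. Following Lemma \ref{Rcompute}, I would push the $\varphi^{11}$-entry into a canonical form: replace $(\varphi^{11},\varphi^{02})$ by $(\varphi^{11}-\check\partial h_1\varphi^{11},\varphi^{02})=(h_1?\ldots)$ — more precisely, using $\operatorname{id}=\check\partial h_1+h_0\check\partial$ and the cocycle relation $\check\partial\varphi^{11}=\pm\varphi^{02}\cup_1 F$, one gets $\varphi^{11}-\check\partial h_1\varphi^{11}=h_1\check\partial\varphi^{11}=\pm h_1(\varphi^{02}\cup_1 F)$, and $\check\partial h_1\varphi^{11}=\overline{D}_F(h_1\varphi^{11})$ up to the lower component (which is $0$ since $\check\partial$ applied to a $0$-cochain $h_1\varphi^{11}$'s would-be second slot is absent — here one must be careful that $\overline{D}_F$ on a degree-$0$ cochain $(\varphi^{01})$ outputs $(\check\partial\varphi^{01},0)$, consistent with the $k=1$ formula). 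So every class has a representative $(h_1(\varphi^{02}\cup_1 F),\varphi^{02})$ (up to sign), and the map $[(\varphi^{11},\varphi^{02})]\mapsto\varphi^{02}\in C(Z,M_n^u(\R))$ is well defined and surjective; injectivity follows because if $\varphi^{02}=0$ then $\varphi^{11}$ is a \v{C}ech $1$-cocycle with real coefficients, hence $\varphi^{11}=\check\partial h_1\varphi^{11}=\overline{D}_F(h_1\varphi^{11})$.

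For $k=0$ a cochain is just $(\varphi^{01})$ with $\varphi^{01}\in\check{C}^0(\W,\mathcal{R}^n)$, the differential is $\overline{D}_F(\varphi^{01})=(\check\partial\varphi^{01},0)$, and there are no coboundaries, so $\overline{\mathbb{H}}^0_F(\W,\mathcal{R})=\{\varphi^{01}:\check\partial\varphi^{01}=0\}=C(Z,\R^n)$, the global sections of $\mathcal{R}^n$. I expect the only real friction to be bookkeeping: getting the signs in $\overline{D}_F$ right (note the apparent typo in (\ref{overdfdefn}), where the right-hand side should read $(\check\partial\varphi^{k1}\times(\varphi^{(k-1)2}\cup_1 F)^{(-1)^k},\check\partial\varphi^{(k-1)2})$ to match the truncation of $D_F$), and checking that $h_k$ commutes appropriately with the fixed-cochain cup products $\cup_1 F$ so that the homotopy manipulations land back in $\overline{C}^\bullet_F$; but since $F$ is a fixed cochain and the $\cup_1$ operations are $\check\partial$-graded derivations by Lemma \ref{gradedderivation}, this is routine. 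The main conceptual point — that the cohomology is concentrated in the degrees where the bottom entry survives as a global section — is exactly as in Lemma \ref{Rcompute}.
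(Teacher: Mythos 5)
Your proposal is correct and is exactly the argument the paper intends: the lemma is stated as the analogue of Lemma \ref{Rcompute}, to be proved by the same partition-of-unity contracting homotopy $h_k$, reducing a cocycle to the normal form $(\pm h_2(\varphi^{02}\cup_1 F),\varphi^{02})$ in degree $1$, killing everything in degrees $\geq 2$, and identifying degree $0$ with global sections. The only blemishes are harmless bookkeeping slips (e.g.\ writing $h_1\check\partial\varphi^{11}$ for $h_2\check\partial\varphi^{11}$, and the sign convention in $\overline{D}_F$), which do not affect the computation.
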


\begin{lemma}\label{middlecommute}
Let $\W$ be a good open cover of a $C^\infty$ manifold $Z$, and fix a cocycle 
$F\in\check{Z}^2(\W,\underline{\Z}^n)$. Let $\pi_*: \mathbb{H}^{k}_{F}(\W,\mathcal{S})\to 
\overline{\mathbb{H}}^{k-1}_{F}(\W,\mathcal{S})$
be the map
\[\pi_*:[\phi^{k0},\phi^{(k-1)1},\phi^{(k-2)2}]\mapsto [\phi^{(k-1)1},\phi^{(k-2)2}],\]
and define $\pi_*: \mathbb{H}^{k}_{F}(\W,\underline{\Z})\to\overline{\mathbb{H}}^{k-1}_{F}(\W,\underline{\Z})$ and 
$\pi_*: \mathbb{H}^{k}_{F}(\W,\mathcal{R})\to\overline{\mathbb{H}}^{k-1}_{F}(\W,\mathcal{R})$ similarly. 
Then there is a commutative diagram with exact rows

\centerline{\xymatrix{
\ar[r]&\mathbb{H}^{k-1}_{F}(\W,\mathcal{R})\ar[r]\ar[d]^{\pi_*}&\mathbb{H}^{k-1}_{F}(\W,\mathcal{S})\ar[r]\ar[d]^{\pi_*}& 
\mathbb{H}^{k}_{F}(\W,\underline{\Z})\ar[d]^{\pi_*}\ar[r]&\mathbb{H}^{k}_{F}(\W,\mathcal{R})\ar[d]^{\pi_*}\ar[r]&\\
\ar[r]&\overline{\mathbb{H}}^{k-2}_{F}(\W,\mathcal{R})\ar[r]&\overline{\mathbb{H}}^{k-2}_{F}(\W,\mathcal{S})\ar[r]& 
\overline{\mathbb{H}}^{k-1}_{F}(\W,\underline{\Z})\ar[r]&\overline{\mathbb{H}}^{k-1}_{F}(\W,\mathcal{R})\ar[r]&
}}
\end{lemma}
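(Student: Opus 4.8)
The plan is to verify that $\pi_*$ is a well-defined chain map on each of the three cochain complexes (integer, real, and $\mathcal{S}$-coefficient), that it commutes with the connecting maps of the short exact coefficient sequence $0\to \underline{\Z}\to \mathcal{R}\to \mathcal{S}\to 0$, and hence induces a morphism of the two long exact sequences from Proposition \ref{cohomologylongexactsequence} and Proposition \ref{exponentiallong2}. First I would observe that $\pi_*$ as defined is literally the projection onto the last two coordinates of the triple $(\phi^{k0},\phi^{(k-1)1},\phi^{(k-2)2})$; comparing the formula for $D_F$ in (\ref{dfdefn}) with the formula for $\overline D_F$ in (\ref{overdfdefn}), the second and third components of $D_F$ are exactly $\overline D_F$ applied to the last two components. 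Hence $\pi_*\circ D_F = \overline D_F\circ \pi_*$ on the nose at the cochain level, so $\pi_*$ descends to cohomology in all three coefficient systems (and trivially respects the truncation conventions in low degrees, since those are defined by the same ``forget the top entry'' recipe on both sides). This gives the three vertical maps in the diagram.

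Next I would check the three squares. The two squares whose horizontal arrows are the coefficient-change maps $\mathbb{H}^{k-1}_F(\W,\mathcal{R})\to\mathbb{H}^{k-1}_F(\W,\mathcal{S})$ and $\mathbb{H}^k_F(\W,\mathcal{R})$ appearing on the far left and far right commute for the trivial reason that the inclusion $\underline{\Z}\hookrightarrow\mathcal{R}$ and the reduction $\mathcal{R}\to\mathcal{S}$ act componentwise on triples, and $\pi_*$ just forgets one component; forgetting a component commutes with a componentwise map. The only square requiring genuine (but still short) argument is the connecting-homomorphism square
\begin{equation*}
\xymatrix{
\mathbb{H}^{k-1}_F(\W,\mathcal{S})\ar[r]\ar[d]^{\pi_*}&\mathbb{H}^k_F(\W,\underline{\Z})\ar[d]^{\pi_*}\\
\overline{\mathbb{H}}^{k-2}_F(\W,\mathcal{S})\ar[r]&\overline{\mathbb{H}}^{k-1}_F(\W,\underline{\Z}).
}
\end{equation*}
Here I would argue exactly as in the proof of Corollary \ref{firstcommute}: given a class $[\phi^{(k-1)0},\phi^{(k-2)1},\phi^{(k-3)2}]$ in $\mathbb{H}^{k-1}_F(\W,\mathcal{S})$, lift each component to a real cochain (possible since $\W$ is good), apply $D_F$ to the real lift, and observe that the resulting integer cocycle's last two components are precisely $\overline D_F$ of the real lift of $(\phi^{(k-2)1},\phi^{(k-3)2})$. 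Since $\pi_*$ and the connecting map are both ``forget the top component'' and ``apply the differential to a lift,'' respectively, and these two operations commute at the cochain level by the same coordinatewise reasoning as above, the square commutes.

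The main (and only mild) obstacle is bookkeeping: one must be careful that the connecting homomorphism for the $\overline{\mathbb{H}}$-complexes is computed with the same choice of lift, shifted in degree, and that the sign conventions $(-1)^{k}$ versus $(-1)^{k-1}$ built into $D_F$ and $\overline D_F$ match up under the degree shift $k\mapsto k-1$ in the bottom row — but a glance at (\ref{dfdefn}) and (\ref{overdfdefn}) shows the signs were set up precisely for this. Once the cochain-level identity $\pi_*\circ D_F=\overline D_F\circ\pi_*$ and the compatibility with lifts are in hand, the commutativity of all squares, and hence the lemma, follows by the standard naturality of the long exact sequence associated to a morphism of short exact sequences of complexes.
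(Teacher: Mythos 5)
Your proposal is correct and follows essentially the same route as the paper: the vertical maps are chain maps because the last two components of $D_F$ are literally $\overline{D}_F$, the coefficient-change squares commute componentwise, and the connecting-homomorphism square is checked by lifting to real cochains over the good cover, applying $D_F$, and projecting onto the last two entries. This matches the paper's proof, which likewise states that "the definitions are set up so that this works" and verifies only the connecting square by the same lifting argument.
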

\begin{proof}
As with Corollary \ref{firstcommute}, the definitions are set up so that this works. We prove only 
commutativity of the the square

\centerline{\xymatrix{
\mathbb{H}^{k-1}_{F}(\W,\mathcal{S})\ar[r]\ar[d]^{\pi_*}& \mathbb{H}^{k}_{F}(\W,\underline{\Z})\ar[d]^{\pi_*}\\
\overline{\mathbb{H}}^{k-2}_{F}(\W,\mathcal{S})\ar[r]& \overline{\mathbb{H}}^{k-1}_{F}(\W,\underline{\Z}).
}}

\noindent Fix a class $[\phi^{(k-1)0},\phi^{(k-2)1},\phi^{(k-3)2}]\in \mathbb{H}^{k-1}_{F}(\W,\mathcal{S})$. 
We first proceed clockwise around the diagram. Since $\W$ is good there exists a 
cochain $[\hat{\phi}^{(k-1)0},\hat{\phi}^{(k-2)1},\hat{\phi}^{(k-3)2}]$ $\in C^{k-1}_{F}(\W,\mathcal{R})$ such that
\begin{align*}
&[\hat{\phi}^{(k-1)0}]_{\R/\Z}=\phi^{(k-1)0},\\
&[\hat{\phi}^{(k-2)1}(\cdot)_l]_{\R/\Z}=\phi^{(k-2)1}(e_l,\cdot), \quad 1\leq l\leq n,\\
&[\hat{\phi}^{(k-3)2}]_{M_n^u(\R/\Z)}=\phi^{(k-3)2}.
\end{align*}
Then the image of $[\phi^{(k-1)0},\phi^{(k-2)1},\phi^{(k-3)2}]$ in $\mathbb{H}^{k}_{F}(\W,\underline{\Z})$ is given by
\begin{align*}
[&\check\partial \hat{\phi}^{(k-1)0} +(-1)^{k+1}\hat{\phi}^{(k-2)1}\cup_1 F+(-1)^{k+1}\hat{\phi}^{(k-3)2}\cup_2 C(F),\\
&\quad \check\partial\hat{\phi}^{(k-2)1}+(-1)^{k}\hat{\phi}^{(k-3)2}\cup_1 F, \check\partial \hat{\phi}^{(k-3)2}].
\end{align*}
This class will have image under $\pi_*: \mathbb{H}^{k}_{F}(\W,\underline{\Z})\to 
\overline{\mathbb{H}}^{k-1}_{F}(\W,\underline{\Z})$ the class
\begin{align*}
[\check\partial\hat{\phi}^{(k-2)1}+(-1)^{k}\hat{\phi}^{(k-3)2}\cup_1 F, \check\partial \hat{\phi}^{(k-3)2}].
\end{align*}
This is exactly what we get if we go anticlockwise around the diagram.
\end{proof}

\begin{lemma}\label{lastcommute}
Let $\W$ be a good open cover of a $C^\infty$ manifold $Z$, and fix a cocycle 
$F\in\check{Z}^2(\W,\underline{\Z}^n)$. Let $\cup F: \overline{\mathbb{H}}^{k-1}_{F}(\W,\mathcal{S})\to \check{H}^{k+1}(\W,
\mathcal{S})$ be the map
\[\cup F:[\phi^{(k-1)1},\phi^{(k-2)2}]\mapsto [(\phi^{(k-1)1}\cup_1 F)^{(-1)^{k+2}}\times(\phi^{(k-2)2}\cup_2 C(F))^{(-1)^{k+2}}],\]
and define $\cup F: \overline{\mathbb{H}}^{k-1}_{F}(\W,\underline{\Z})\to \check{H}^{k+1}(\W,\underline{\Z})$ 
and $\cup F: \overline{\mathbb{H}}^{k-1}_{F}(\W,\mathcal{R})\to \check{H}^{k+1}(\W,\mathcal{R})$ similarly. 
Then there is a commutative diagram

\centerline{\xymatrix{
\ar[r]&\overline{\mathbb{H}}^{k-1}_{F}(\W,\mathcal{R})\ar[r]\ar[d]^{\cup F}&
\overline{\mathbb{H}}^{k-1}_{F}(\W,\mathcal{S})\ar[r]\ar[d]^{\cup F}& 
\overline{\mathbb{H}}^{k}_{F}(\W,\underline{\Z})\ar[r]\ar[d]^{\cup F}& 
\overline{\mathbb{H}}^{k}_{F}(\W,\mathcal{R})\ar[r]\ar[d]^{\cup F}&\\
\ar[r]&H^{k+1}(\W,\mathcal{R})\ar[r]&H^{k+1}(\W,\mathcal{S})\ar[r]&H^{k+2}(\W,\underline{\Z})
\ar[r]&H^{k+2}(\W,\mathcal{R})\ar[r]&\\
}}
\end{lemma}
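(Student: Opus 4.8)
The plan is to prove Lemma \ref{lastcommute} by the same strategy used for Corollary \ref{firstcommute} and Lemma \ref{middlecommute}: the long exact sequences in both rows are the Bockstein-type sequences coming from the short exact coefficient sequences $0\to\underline{\Z}\to\mathcal{R}\to\mathcal{S}\to 0$ (applied to the complexes $\overline{C}^\bullet_F$ on top and to ordinary \v{C}ech complexes $\check{C}^\bullet$ on the bottom), so the only content is that $\cup F$ is a chain map at the level of cochain complexes that is compatible with the inclusions $\underline{\Z}\hookrightarrow\mathcal{R}$ and the quotients $\mathcal{R}\twoheadrightarrow\mathcal{S}$. First I would check that $\cup F$ is well-defined on cohomology, i.e. that the cochain-level map $(\phi^{(k-1)1},\phi^{(k-2)2})\mapsto (\phi^{(k-1)1}\cup_1 F)^{(-1)^{k+2}}\times(\phi^{(k-2)2}\cup_2 C(F))^{(-1)^{k+2}}$ intertwines $\overline{D}_F$ with $\check\partial$; this is where the graded-derivation identities of Lemma \ref{gradedderivation} and the defining equation $\check\partial C(F)_{ij}=F_i\cup F_j-F_j\cup F_i$ (Lemma \ref{commutecoboudary}) do the work, together with $\check\partial F=0$, which kills the "$\phi\cup_1\check\partial F$" and "$\phi\cup_2\check\partial C(F)$" correction terms. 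The sign bookkeeping is chosen precisely so that $\check\partial$ of the proposed cochain equals $\cup F$ applied to $\overline{D}_F$ of the input, so $\cup F$ descends to the stated map on cohomology, and simultaneously the same cochain formula, read with integer or real coefficients, gives the vertical maps in the other two columns.

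Next, granting that $\cup F$ is a chain map for each coefficient system, commutativity of the diagram reduces to the naturality of the connecting homomorphism: given a short exact sequence of cochain complexes $0\to A^\bullet\to B^\bullet\to D^\bullet\to 0$ and a morphism to another such sequence that respects the three complexes, the induced ladder of long exact sequences commutes (this is the standard fact cited as \cite[Lemma 4.31]{RaeWill98}, used already in Proposition \ref{cohomologylongexactsequence}). So it suffices to exhibit a commutative diagram of short exact sequences of cochain complexes
\[
\xymatrix{
0\ar[r]&\overline{C}^\bullet_F(\W,\underline{\Z})\ar[r]\ar[d]^{\cup F}&\overline{C}^\bullet_F(\W,\mathcal{R})\ar[r]\ar[d]^{\cup F}&\overline{C}^\bullet_F(\W,\mathcal{S})\ar[r]\ar[d]^{\cup F}&0\\
0\ar[r]&\check{C}^{\bullet+2}(\W,\underline{\Z})\ar[r]&\check{C}^{\bullet+2}(\W,\mathcal{R})\ar[r]&\check{C}^{\bullet+2}(\W,\mathcal{S})\ar[r]&0
}
\]
whose horizontal arrows are induced by $\underline{\Z}\hookrightarrow\mathcal{R}\twoheadrightarrow\mathcal{S}$ entrywise. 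The exactness of the rows is immediate from goodness of $\W$ (exactly as in Proposition \ref{cohomologylongexactsequence}), and commutativity of the two squares is visible directly from the cup-product formulas (\ref{cupformula1}), (\ref{cupformula2}) and the formula for $C(F)$: the coefficients $F_i$, $(e_l)_j$, $C(F)_{ij}$ are all integers, so raising a $\T$-valued or $\R$-valued cochain to those integer powers commutes on the nose with the coefficient maps. As in the proofs of Corollary \ref{firstcommute} and Lemma \ref{middlecommute}, I would then spell out explicitly only the single square involving the Bockstein $\overline{\mathbb{H}}^{k-1}_F(\W,\mathcal{S})\to\overline{\mathbb{H}}^k_F(\W,\underline{\Z})$: lift a class $[\phi^{(k-1)1},\phi^{(k-2)2}]$ to a real cochain, apply $\overline{D}_F$ to land in the integral complex, then apply $\cup F$; compare with first applying $\cup F$ over $\mathcal{S}$, lifting to $\mathcal{R}$, and applying $\check\partial$ — these agree because $\cup F$ is a chain map and commutes with lifts, the lifts differing only by an element of the integral subcomplex which $\cup F$ maps into the integral subcomplex.

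I do not expect a serious obstacle here; the lemma is, as the authors themselves flag for the companion results, "set up so that this works." The one place that requires genuine (if routine) care is verifying the chain-map identity $\check\partial\circ(\cup F)=(\cup F)\circ\overline{D}_F$ on cochains with the correct signs — in particular tracking the factor $(-1)^{k+2}$ through Lemma \ref{gradedderivation} and confirming that the cross term coming from $\check\partial\phi^{(k-1)1}$ in $\overline{D}_F$, after $\cup_1 F$, cancels against the term produced by $\check\partial$ hitting $\phi^{(k-2)2}\cup_2 C(F)$ via the $F_i\cup F_j - F_j\cup F_i$ identity. Since $C(F)$ is defined precisely so that $\check\partial C(F)_{ij}$ supplies exactly this non-commutativity correction, the cancellation is forced, but writing it out cleanly in multiplicative ($\T$-valued) notation is the only nontrivial calculation, and by the usual "integer coefficients, then exponentiate" dictionary it is enough to check it additively.
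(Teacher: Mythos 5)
Your proposal is correct and follows essentially the same route as the paper: verify that the cochain-level formula for $\cup F$ intertwines $\overline{D}_F$ with $\check\partial$ (using Lemma \ref{gradedderivation}, $\check\partial F=0$, and the identity $\check\partial C(F)_{ij}=F_i\cup F_j-F_j\cup F_i$, which is exactly the comparison with the definition of $D_F$ that the paper invokes), and then deduce commutativity of the ladder from the coefficient sequence $0\to\underline{\Z}\to\mathcal{R}\to\mathcal{S}\to 0$ and goodness of $\W$, spelling out only the Bockstein square as in Corollary \ref{firstcommute} and Lemma \ref{middlecommute}. The paper's own proof is just a compressed version of this argument, so no further comparison is needed.
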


\begin{proof}
The formula for $\cup F: \overline{\mathbb{H}}^{k-1}_{F}(\W,\mathcal{S})\to \check{H}^{k+1}(\W,\mathcal{S})$ 
should be compared with the definition of the differential $D_F$ from Equation (\ref{dfdefn}). That it is a 
well-defined map of cohomology groups follows from an easy  calculation.
Commutativity then follows from the definitions and the fact that $\W$ is good, just as for the proofs of 
Corollary \ref{firstcommute} and Lemma \ref{middlecommute}.
\end{proof}

\section{The Gysin Sequence}

\begin{theorem}\label{MyGysin}
Let $\W$ be a good open cover of a $C^\infty$ manifold $Z$, and fix a cocycle 
$F\in \check{Z}^2(\W,\underline{\Z}^n)$. Then there is a commuting diagram with exact columns and rows:

\centerline{\xymatrix{
\ar[r]&\check{H}^k(\W,\mathcal{R})\ar[r]^{\pi^*}\ar[d]& \mathbb{H}^k_F(\W,\mathcal{R})\ar[r]^{\pi_*}\ar[d]& 
\overline{\mathbb{H}}^{k-1}_{F}(\W,\mathcal{R})\ar[d]\ar[r]^{\cup F}&\check{H}^{k+1}(\W,\mathcal{R})\ar[d]\ar[r]&\\
\ar[r]&\check{H}^k(\W,\mathcal{S})\ar[r]^{\pi^*}\ar[d]& \mathbb{H}^k_F(\W,\mathcal{S})\ar[r]^{\pi_*}\ar[d]& 
\overline{\mathbb{H}}^{k-1}_{F}(\W,\mathcal{S})\ar[d]\ar[r]^{\cup F}&\check{H}^{k+1}(\W,\mathcal{S})\ar[d]\ar[r]&\\
\ar[r]&\check{H}^{k+1}(\W,\underline{\Z})\ar[r]^{\pi^*}& \mathbb{H}^{k+1}_F(\W,\underline{\Z})\ar[r]^{\pi_*}& 
\overline{\mathbb{H}}^{k}_{F}(\W,\underline{\Z})\ar[r]^{\cup F} & \check{H}^{k+2}(\W,\underline{\Z})\ar[r]&\\
}}
\end{theorem}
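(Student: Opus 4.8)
The plan is to obtain each \emph{row} of the diagram as the long exact cohomology sequence of a short exact sequence of cochain complexes, and then to deduce the commutativity of every square from the three naturality statements already established.

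Fix a coefficient system $\mathcal{G}$, one of $\underline{\Z},\mathcal{R},\mathcal{S}$. First I would observe that the cochain-level maps underlying $\pi^*$ and $\pi_*$, namely
\[
\pi^*\colon \check{C}^k(\W,\mathcal{G})\to C^k_F(\W,\mathcal{G}),\quad \phi\mapsto(\phi,1,1),\qquad \pi_*\colon C^k_F(\W,\mathcal{G})\to \overline{C}^{\,k-1}_F(\W,\mathcal{G}),\quad (\phi^{k0},\phi^{(k-1)1},\phi^{(k-2)2})\mapsto(\phi^{(k-1)1},\phi^{(k-2)2})
\]
(with $1$'s replaced by $0$'s when $\mathcal{G}=\underline{\Z},\mathcal{R}$), fit into a sequence
\[
0\longrightarrow \check{C}^\bullet(\W,\mathcal{G})\xrightarrow{\ \pi^*\ } C^\bullet_F(\W,\mathcal{G})\xrightarrow{\ \pi_*\ } \overline{C}^{\,\bullet-1}_F(\W,\mathcal{G})\longrightarrow 0
\]
which is exact in each degree: in every degree $C^\bullet_F(\W,\mathcal{G})$ is canonically the product of its first slot (a copy of $\check{C}^\bullet(\W,\mathcal{G})$) and its remaining slots (a copy of $\overline{C}^{\,\bullet-1}_F(\W,\mathcal{G})$, truncated suitably in low degree), with $\pi^*$ the inclusion of the first factor and $\pi_*$ the projection onto the second. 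Comparing (\ref{dfdefn}) with the \v{C}ech differential and with (\ref{overdfdefn}) shows that $D_F(\phi,1,1)=(\check\partial\phi,1,1)$ and that discarding the first slot of $D_F(\phi^{k0},\phi^{(k-1)1},\phi^{(k-2)2})$ recovers $\overline{D}_F(\phi^{(k-1)1},\phi^{(k-2)2})$; hence $\pi^*$ and $\pi_*$ are morphisms of complexes (with $\overline{C}^{\,\bullet-1}_F$ carrying the shifted differential), and the sequence above is short exact. This step uses nothing about $\W$; it is the reason the signs in $D_F$ and $\overline{D}_F$ were fixed as they are.

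The long exact sequence in cohomology of this short exact sequence is, for each $\mathcal{G}$, the row
\[
\cdots\to\check{H}^k(\W,\mathcal{G})\xrightarrow{\pi^*}\mathbb{H}^k_F(\W,\mathcal{G})\xrightarrow{\pi_*}\overline{\mathbb{H}}^{\,k-1}_F(\W,\mathcal{G})\xrightarrow{\ \delta\ }\check{H}^{k+1}(\W,\mathcal{G})\to\cdots .
\]
I then identify the connecting map $\delta$ with the map $\cup F$ of Lemma \ref{lastcommute}. Given a $\overline{D}_F$-cocycle $(\varphi^{(k-1)1},\varphi^{(k-2)2})$, lift it along $\pi_*$ to $(1,\varphi^{(k-1)1},\varphi^{(k-2)2})\in C^k_F(\W,\mathcal{G})$ and apply $D_F$: since $(\varphi^{(k-1)1},\varphi^{(k-2)2})$ is a cocycle and $\pi_*$ is a chain map, the last two slots of the image vanish, so $D_F(1,\varphi^{(k-1)1},\varphi^{(k-2)2})$ lies in the image of $\pi^*$ and its first slot is $(\varphi^{(k-1)1}\cup_1 F)^{\pm1}\times(\varphi^{(k-2)2}\cup_2 C(F))^{\pm1}$. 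Up to the overall sign convention fixed in the definitions this is exactly $\cup F\,[\varphi^{(k-1)1},\varphi^{(k-2)2}]$; the sign is immaterial for exactness, and the $\underline{\Z}$ and $\mathcal{R}$ cases run additively. Thus all three rows are exact.

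It remains to assemble the $3\times3$ block. The columns are exact: for $\check H$ this is the exponential sequence of the good cover $\W$, for $\mathbb{H}_F$ it is Proposition \ref{cohomologylongexactsequence}, and for $\overline{\mathbb{H}}_F$ it is Proposition \ref{exponentiallong2}. Each square of the diagram contains exactly one row-direction map ($\pi^*$, $\pi_*$ or $\cup F$) together with one column-direction map (a coefficient change or an exponential connecting map), and its commutativity is precisely one of Corollary \ref{firstcommute}, Lemma \ref{middlecommute}, Lemma \ref{lastcommute}, according to which of $\pi^*,\pi_*,\cup F$ occurs. The only genuinely delicate point — and the main obstacle — is the square in which the row connecting map $\cup F$ meets an exponential connecting map: in an abstract array of short exact sequences of complexes the two boundary homomorphisms merely anticommute, so on-the-nose commutativity of this square rests on the specific sign choices in $D_F$, $\overline{D}_F$ and in the definition of $\cup F$; this coherence is exactly the deferred computation of Lemma \ref{lastcommute}. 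Everything else is formal: the long exact sequence functor applied in the two directions, glued along the three naturality lemmas.
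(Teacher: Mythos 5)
Your argument is correct, and it reproduces the paper's treatment of everything except the row exactness: commutativity of the squares is delegated, exactly as in the paper, to Corollary \ref{firstcommute}, Lemma \ref{middlecommute} and Lemma \ref{lastcommute}, and column exactness to Proposition \ref{cohomologylongexactsequence}, Proposition \ref{exponentiallong2} and the usual exponential sequence for the good cover. Where you genuinely diverge is in how the rows are obtained. The paper asserts horizontal exactness ``by the definitions'' and then verifies directly only the delicate spot, exactness at $\overline{\mathbb{H}}^{k-1}_{F}(\W,\mathcal{S})$: it shows $\cup F\circ\pi_*=1$ from the first slot of the $D_F$-cocycle condition, and conversely manufactures a preimage $[\phi^k,\phi^{(k-1)1},\phi^{(k-2)2}]$ from any class killed by $\cup F$. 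You instead exhibit the degreewise split short exact sequence of complexes $0\to\check{C}^\bullet(\W,\mathcal{G})\to C^\bullet_F(\W,\mathcal{G})\to\overline{C}^{\,\bullet-1}_F(\W,\mathcal{G})\to 0$ (the fact that the lower two slots of $D_F$ do not see the first slot, and that $D_F(\phi,1,1)=(\check\partial\phi,1,1)$, is exactly what makes $\pi^*$ and $\pi_*$ chain maps), take the long exact sequence, and identify the connecting homomorphism with $\cup F$ up to the global sign $(-1)^{k+1}$ versus $(-1)^{k+2}$, which as you say is irrelevant for exactness. This buys you exactness at \emph{all} spots of each row in one stroke, makes explicit that $\overline{\mathbb{H}}^{\,k-1}_F$ is the cohomology of the quotient complex and that $\cup F$ is (up to sign) a connecting map --- facts the paper leaves implicit --- at the cost of a routine check of the low-degree truncations and of the sign conventions in $\overline{D}_F$; note that the paper's hands-on verification at $\overline{\mathbb{H}}^{k-1}_{F}(\W,\mathcal{S})$ is precisely your connecting-map computation unwound. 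Your closing caveat is also apt: the square where $\cup F$ meets the Bockstein is where two connecting homomorphisms would in general only anticommute, so its on-the-nose commutativity is not formal and really does rest on the sign built into the definition of $\cup F$, i.e.\ on Lemma \ref{lastcommute}, just as in the paper.
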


\begin{proof}
Commutativity is the content of Corollary \ref{firstcommute}, Lemma \ref{middlecommute} and Lemma 
\ref{lastcommute}. Exactness in the vertical directions comes from Proposition \ref{cohomologylongexactsequence} 
and Proposition \ref{exponentiallong2} (as well as the ordinary \v{C}ech cohomology ``exponential" long exact sequence). 
Exactness in the horizontal direction is assured by the definitions; the most difficult part is exactness in the 
horizontal direction at $\overline{\mathbb{H}}^{k-1}_{F}(\W,\mathcal{S})$, which we now prove. First, we 
show $\cup F\circ\pi_*=1$. Suppose $[\phi^{k0},\phi^{(k-1)1},\phi^{(k-2)2}]\in \mathbb{H}^k_F(\W,\mathcal{S})$. 
By definition we have
\[(\pi_*[\phi^{k0},\phi^{(k-1)1},\phi^{(k-2)2}])\cup F:=[(\phi^{(k-1)1}\cup_1 F)^{(-1)^{k+2}}\times(\phi^{(k-2)2}
\cup_2 C(F))^{(-1)^{k+2}}].\]
However, since $(\phi^{k0},\phi^{(k-1)1},\phi^{(k-2)2})$ is closed under $D_F$, we can infer
\[\check\partial\phi^{k0}=(\phi^{(k-1)1}\cup_1 F)^{(-1)^{k+2}}\times(\phi^{(k-2)2}\cup_2 C(F))^{(-1)^{k+2}},\]
so that
\[(\pi_*[\phi^{k0},\phi^{(k-1)1},\phi^{(k-2)2}])\cup F=[\check\partial\phi^{k0}]=1\in \check{H}^{k+1}(\W,\mathcal{S}),\]
and therefore $\cup F\circ\pi_*=1$.

On the other hand, if $[\phi^{(k-1)1},\phi^{(k-2)2}]\in \overline{\mathbb{H}}^{k-1}_{F}(\W,\mathcal{S})$ 
is such that \[[\phi^{(k-1)1},\phi^{(k-2)2}]\cup F=1\in \check{H}^{k+1}(\W,\mathcal{S}),\]
then there exists a \v{C}ech $k$-cochain $\phi^{k}\in \check{C}^{k}(\W,\mathcal{S})$ such that
\[\check\partial\phi^{k}=(\phi^{(k-1)1}\cup_1 F)^{(-1)^{k+2}}\times(\phi^{(k-2)2}\cup_2 C(F))^{(-1)^{k+2}}.\]
Then $[\phi^{k},\phi^{(k-1)1},\phi^{(k-2)2}]$ is an element of $\mathbb{H}^k_F(\W,\mathcal{S})$ that 
satisfies \[\pi_*[\phi^{k},\phi^{(k-1)1},\phi^{(k-2)2}]=[\phi^{(k-1)1},\phi^{(k-2)2}].\]
\end{proof}

\section[Correspondence with the work of Bouwknegt et al.]{Correspondence 
with the work of Bouwknegt, Hannabuss and Mathai}\label{BHMwork}

We show here that the Gysin sequence given in Theorem \ref{MyGysin} corresponds with the 
Gysin sequence from Theorem \ref{BHMGysin}, in the sense of the following theorem.
\begin{theorem}\label{AgreementwithBHM}
Let $Z$ be a $C^\infty$ manifold, $\W$ a good open cover of $Z$. Fix a class $F\in \check{Z}^2(\W,\underline{\Z}^n)$, 
and denote the image of $F$ under the \v{C}ech-de Rham isomorphism by $F_2\in\Omega^2(Z,\t)$. 
Then there is a commutative diagram with exact rows:

\centerline{\xymatrix{
\ar[r]&\check{H}^{k}(\W,\underline{\Z})\ar[r]\ar[d]& \mathbb{H}^{k}_F(\W,\underline{\Z})\ar[r]\ar[d]& 
\overline{\mathbb{H}}^{k-1}_{F}(\W,\underline{\Z})\ar[r]\ar[d] & \check{H}^{k+1}(\W,\underline{\Z})\ar[r]\ar[d]&\\
\ar[r]&H^{k}_{dR}(Z)\ar[r]& H^{k,(0,2)}_{F_2}(Z,\t^*)\ar[r]& H^{k,(1,2)}_{F_2}(Z,\t^*)\ar[r] & H^{k+1}_{dR}(Z)\ar[r]&\\
}}

\end{theorem}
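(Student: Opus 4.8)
The plan is to factor the four vertical maps through the realification and then to identify the real-coefficient \v{C}ech Gysin sequence with the de Rham one by resolving the constant sheaves via the de Rham resolution. First I would note that the complexes $C^\bullet_F(\W,\underline\Z)$, $\overline C^\bullet_F(\W,\underline\Z)$ and the short exact sequence $0\to\check C^\bullet(\W,\underline\Z)\to C^\bullet_F(\W,\underline\Z)\to\overline C^{\bullet-1}_F(\W,\underline\Z)\to0$ producing the top row of the diagram have evident analogues with the constant sheaf $\underline\R$ in place of $\underline\Z$, defined by the identical recipe; the coefficient inclusion $\underline\Z\hookrightarrow\underline\R$ maps the integer Gysin sequence to this ``real \v{C}ech Gysin sequence'', and every vertical map in the statement is built to factor through this realification. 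So it suffices to produce a commuting ladder relating the real \v{C}ech Gysin sequence to the de Rham Gysin sequence of Theorem \ref{BHMGysin} with $m=0$, $l=2$.

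For this, write $\mathcal A^q$ for the sheaf of smooth $q$-forms on $Z$ and form the \v{C}ech--de Rham bicomplex $K^{p,q}=\check C^p(\W,\mathcal A^q)$. Because $\W$ is good, both augmentations $\check C^\bullet(\W,\underline\R)\to\mathrm{Tot}(K)$ (Poincar\'{e} lemma on the contractible intersections) and $\Omega^\bullet(Z)\to\mathrm{Tot}(K)$ (softness of $\mathcal A^q$) are quasi-isomorphisms; this is the classical \v{C}ech--de Rham theorem and already supplies the leftmost and rightmost vertical arrows, and tensoring with $\wedge^i\t^*$ for $i=1,2$ gives the analogous statement for $\underline\R^n$ and $\underline{M_n^u(\R)}\cong\wedge^2\t^*$ coefficients. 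Next I would build a ``resolved'' complex $\mathcal K_F^\bullet$: three layers, the $i$-th being the \v{C}ech--de Rham double complex $\check C^\bullet(\W,\mathcal A^\bullet\otimes\wedge^i\t^*)$, glued by the off-diagonal operations of $D_F$ from Equation (\ref{dfdefn}) (the cochains $F$ and $C(F)$ are locally constant, hence smooth, so $\cup_1F$ and $\cup_2C(F)$ still make sense). Lemmas \ref{commutecoboudary} and \ref{gradedderivation} give $D_F^2=0$ here, and the inclusion of the $q=0$ part is a quasi-isomorphism $C^\bullet_F(\W,\underline\R)\hookrightarrow\mathcal K_F^\bullet$ (filter by layer, then by \v{C}ech degree, and apply the Poincar\'{e} lemma layerwise). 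Dually I would form $\mathcal K_{F_2}^\bullet$ by the same recipe but using the \emph{global} curvature $F_2$ in the off-diagonal operations; there is then \emph{no} $\cup_2C(F)$-type term, since $(F_2)_i\wedge(F_2)_j=(F_2)_j\wedge(F_2)_i$, and the inclusion of the $p=0$ part is a quasi-isomorphism $C_{F_2}^{\bullet,(0,2)}(Z,\t^*)\hookrightarrow\mathcal K_{F_2}^\bullet$.

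The key step is that $\mathcal K_F^\bullet$ and $\mathcal K_{F_2}^\bullet$ are \emph{isomorphic} complexes: they share the same underlying graded module (each layer is its own \v{C}ech--de Rham resolution, independent of $F$ versus $F_2$), and their differentials differ only in that the off-diagonal maps use $F\in K^{2,0}$ in the one case and $F_2\in K^{0,2}$ in the other. By hypothesis $F_2$ is the \v{C}ech--de Rham image of $F$, so there is an explicit zig-zag $\eta^{(1,0)}\in\check C^1(\W,\mathcal A^0\otimes\t)$, $\eta^{(0,1)}\in\check C^0(\W,\mathcal A^1\otimes\t)$ with $\check\partial\eta^{(1,0)}=F$, $d\eta^{(1,0)}=\check\partial\eta^{(0,1)}$, $d\eta^{(0,1)}=\pm F_2$; the associated ``gauge transformation'' $\Theta=\mathrm{id}+(\text{corrections built from }\eta^{(1,0)},\eta^{(0,1)})$ is an isomorphism of complexes $\mathcal K_F^\bullet\to\mathcal K_{F_2}^\bullet$. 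The one subtle point in checking $D_{F_2}\Theta=\Theta D_F$ is that the image of the $\cup_2C(F)$ term must cancel: this works because, by Lemma \ref{commutecoboudary}, $C(F)_{ij}$ is exactly the cochain with $\check\partial C(F)_{ij}=F_i\cup F_j-F_j\cup F_i$, and under the homotopy from $F$ to $F_2$ this commutator becomes the honest coboundary of a term assembled from $\eta^{(1,0)}$ and $\eta^{(0,1)}$ — the same coboundary contributed by the corrections defining $\Theta$. Composing on cohomology, the zig-zag $C^\bullet_F(\W,\underline\R)\hookrightarrow\mathcal K_F^\bullet\xrightarrow{\ \Theta\ }\mathcal K_{F_2}^\bullet\hookleftarrow C_{F_2}^{\bullet,(0,2)}(Z,\t^*)$ yields an isomorphism $\mathbb H^k_F(\W,\underline\R)\cong H^{k,(0,2)}_{F_2}(Z,\t^*)$, and the same argument with the top layer deleted throughout gives $\overline{\mathbb H}^{k-1}_F(\W,\underline\R)\cong H^{k,(1,2)}_{F_2}(Z,\t^*)$.

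It remains to check that these maps, together with the classical \v{C}ech--de Rham isomorphisms on the outer terms, form a commuting ladder of long exact sequences, and this is immediate once one observes that all the quasi-isomorphisms above are compatible with the ``include the top layer / project off the top layer'' maps and carry the connecting map $\cup F$ of Lemma \ref{lastcommute} to the connecting map $\wedge F_2$ of Theorem \ref{BHMGysin} (the $\cup_2C(F)$ contribution to $\cup F$ maps, as above, to a coboundary, consistently with the de Rham connecting map involving only $\wedge F_2$). Exactness of the two rows is the real-coefficient case of Theorem \ref{MyGysin} and Theorem \ref{BHMGysin} respectively, so precomposing with $\underline\Z\hookrightarrow\underline\R$ finishes the proof. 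The main obstacle is the construction and verification of $\Theta$ — i.e.\ showing that the Steenrod defect $C(F)$ is reabsorbed, after resolving, by the zig-zag data joining $F$ to $F_2$; this bookkeeping is precisely what explains why the de Rham differential $D_{F_2}$ of \cite{BouHanMat05} carries no analogue of the $\cup_2C(F)$ term, and it is where essentially all the effort lies (carried out in detail in \cite{Rat}).
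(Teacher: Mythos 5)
Your overall strategy (realify, resolve both sides in the \v{C}ech--de Rham bicomplex, and transport the twist from $F$ to $F_2$ by a gauge transformation) is genuinely different from the paper's proof, which never inverts anything: the paper writes down explicit vertical maps $(\phi^{k0},\phi^{(k-1)1},\phi^{(k-2)2})\mapsto(H_{k0},H_{(k-1)1},H_{(k-2)2})$ via the Bott--Tu collating formula, with an explicit correction term $D(\phi^{(k-2)2},C(F))$ absorbing the $\cup_2C(F)$ contribution, and then verifies by direct computation (Lemma \ref{cochainmap} and its appendix proof) that this is a chain map and that the one nontrivial square (the $\cup F$ versus $\wedge F_2$ square) commutes. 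However, as written your argument has a genuine gap exactly at its central step: the assertion that $\Theta=\mathrm{id}+(\text{corrections built from }\eta^{(1,0)},\eta^{(0,1)})$ is an isomorphism of complexes intertwining $D_F$ and $D_{F_2}$. A first-order correction of the type you describe (a degree-zero map from layer $i$ to layer $i-1$ built from $\eta$) cannot intertwine the two differentials on the three-layer complex: comparing $D_{F_2}\Theta$ with $\Theta D_F$ produces second-order terms mapping layer $2$ to layer $0$ (compositions such as $(\wedge F_2)\circ N$ and $N\circ(\cup_1F)$, together with the image of $\cup_2C(F)$), and cancelling these forces a second-order correction whose construction requires precisely the Steenrod-type bookkeeping coming from the non-commutativity of the cup product at cochain level --- i.e.\ the interplay of $C(F)$ with $\eta\cup F$ and $\eta\cup\eta$ terms. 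You assert this cancellation in one sentence; it is not a formality, it is the entire content of the comparison (it is what Equations (\ref{cancelledterm})--(\ref{difffirst}) in the appendix establish in the paper's formulation), and deferring it to \cite{Rat} does not close the gap, since what is proved there is a different statement (independence of the representative $F$), not the existence of your $\Theta$.

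Two further points sharpen this. First, your route proves strictly more than the theorem needs: it would yield isomorphisms $\mathbb{H}^k_F(\W,\underline{\R})\cong H^{k,(0,2)}_{F_2}(Z,\t^*)$, a statement the authors explicitly say they cannot prove (see the remark before Lemma \ref{cochainmap}, where the issue is avoided precisely because no inverse of the collating formula is available); the extra strength is exactly where your unproved step sits, so the theorem should not be made to depend on it. Second, your closing claim that compatibility of the connecting maps is ``immediate'' is not: carrying $\cup F$ (which contains the $\cup_2C(F)$ term, cf.\ Lemma \ref{lastcommute} and Equation (\ref{dfdefn})) to $\wedge F_2$ of Theorem \ref{BHMGysin} is the single square the paper singles out as ``quite nontrivial,'' and in your setup that verification has simply been pushed into the unverified properties of $\Theta$. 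To repair the argument you would either have to construct $\Theta$ with its second-order corrections and check $D_{F_2}\Theta=\Theta D_F$ in full (a computation of comparable length to the paper's appendix), or abandon the zig-zag and, as the paper does, define the vertical maps directly by the collating formula with the explicit exact correction $D(\phi^{(k-2)2},C(F))$ and verify the chain-map property by hand.
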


The difficult part of this theorem is the construction of the maps
\[\mathbb{H}^{k}_F(\W,\underline{\Z})\to H^{k,(0,2)}_{F_2}(Z,\t^*).\]
Recall an element in $\mathbb{H}^k_{F}(\W,\underline{\Z})$ is the class of a triple 
$(\phi^{k0},\phi^{(k-1)1},\phi^{(k-2)2})\in 
C_F^k(\W,\Z)$. The image of $[\phi^{k0},\phi^{(k-1)1},\phi^{(k-2)2}]$ 
will be a class $[H_{k0},H_{(k-1)1},H_{(k-2)2}]\in H^{k,(0,2)}_{F_2}(Z,\t^*)$. We shall define the 
components $H_{k0},H_{(k-1)1}$ and $H_{(k-2)2}$  in terms of $\phi^{k0},\phi^{(k-1)1}$ and $\phi^{(k-2)2}$ 
over the next page. By Lemma \ref{PartitionExistence} we may assume there is a partition of 
unity $\{\rho_{\lambda_0}\}$ subordinate to $\W$. 
Using the ``collating formula" for the \v{C}ech-de Rham isomorphism from \cite[Prop 9.5]{BotTu82}  
(see also \cite[Prop 1.4.17]{Bry93} ), we have images of $F$ and $\phi^{(k-2)2}$ in $\Omega^2(Z,\t)$ 
and $\Omega^{k-2}(Z,\wedge^2\t^*)$, respectively, given by
\begin{align}
\label{F2rep}(F_2)_l|_{W_{\lambda_0}}:=&\sum_{\lambda_1,\lambda_2}F_{\lambda_0\lambda_1\lambda_2}(\cdot)_l
d\rho_{\lambda_1}d\rho_{\lambda_2},\\
(H_{(k-2)2})_{ij}|_{W_{\lambda_0}}:=&\sum_{\lambda_1,\dots, \lambda_{k-2}}
\phi^{(k-2)2}_{\lambda_0\dots\lambda_{k-2}}(\cdot)_{ij}d\rho_{\lambda_1}\dots 
d\rho_{\lambda_{k-2}}\,,\qquad 1\leq i<j\leq n\notag\,,
\end{align}
where we have suppressed the wedge $\wedge$. To define $H_{(k-1)1}$, we have to take into account that 
$\phi^{(k-1)1}$ is not closed under $\check\partial$. Therefore, using the same collating formula, we 
have a global differential form given by
\begin{align*}
(H_{(k-1)1})_{l}|_{W_{\lambda_0}}:=&\sum_{\lambda_1,\dots, \lambda_{k-1}}\phi^{(k-1)1}_{\lambda_0
\dots\lambda_{k-1}}(\cdot)_{l}d\rho_{\lambda_1}\dots d\rho_{\lambda_{k-1}}\\
&\quad +\sum_{\lambda_{1},\dots,\lambda_k}\check\partial\phi^{(k-1)1}_{\lambda_{0}
\dots\lambda_k}(\cdot)_{l}\rho_{\lambda_1}d\rho_{\lambda_2}\dots d\rho_{\lambda_{k}}\,,\qquad 1\leq l\leq n.
\end{align*}
Defining $H_{k0}$ is a little trickier because $\phi^{k0}$ get a contribution from  $\phi^{(k-2)2}$, via the differential $D_F$.
This does not occur in the dimensionally reduced cohomology of 
\cite{BouHanMat05} (cf. Figs \ref{Mydifferential} and \ref{BHMdifferential}). Fortunately, 
it turns out this contribution is an exact form. Define a global differential form $D(\phi^{(k-2)2}, C(F))$ by the formulas:
\begin{align*}
&D(\phi^{(k-2)2}, C(F))|_{W_{\lambda_0}}\\
=&\begin{cases}
\sum_{\stackrel{1\leq i<j\leq n}{\lambda_1,\dots, \lambda_4}}\phi^{02}(\cdot)_{ij}C(F)_{\lambda_1\dots  
\lambda_4}(\cdot)_{ij}\rho_{\lambda_1}\rho_{\lambda_3}d\rho_{\lambda_2} d\rho_{\lambda_4} & k=2\\
\sum_{\stackrel{1\leq i<j\leq n}{\lambda_1,\dots, \lambda_4}}\phi^{12}_{\lambda_2\lambda_1}(\cdot)_{ij}
C(F)_{\lambda_1\dots \lambda_4}(\cdot)_{ij}\rho_{\lambda_1}d\rho_{\lambda_2}d\rho_{\lambda_3} d\rho_{\lambda_4} & k=3\\
\sum_{\stackrel{1\leq i<j\leq n}{\lambda_1,\dots,\lambda_{k+1}}}\phi_{\lambda_{k-1}\lambda_1
\dots\lambda_{k-2}}^{(k-2)2}(\cdot)_{ij}C(F)_{\lambda_{k-2}\dots\lambda_{k+1}}(\cdot)_{ij}
\rho_{\lambda_1}d\rho_{\lambda_2}\dots d\rho_{\lambda_{k+1}}& k>3
\end{cases}
\end{align*}
Then we define
\begin{align*}
(H_{k0})|_{W_{\lambda_0}}:=&\sum_{\lambda_1,\dots, \lambda_{k}}\phi^{k0}_{\lambda_0\dots
\lambda_{k}}(\cdot)_{l}d\rho_{\lambda_1}\dots d\rho_{\lambda_{k}}\\
&\quad +\sum_{\lambda_{1},\dots,\lambda_{k+1}}\check\partial\phi^{k0}_{\lambda_{0}
\dots\lambda_{k+1}}(\cdot)_{l}\rho_{\lambda_1}d\rho_{\lambda_2}\dots d\rho_{\lambda_{k+1}}\\
&\quad +(-1)^{k+1}D(\phi^{(k-2)2}, C(F))|_{W_{\lambda_0}}.
\end{align*}

\begin{remark}
Examining the definition of $\mathbb{H}^k_F(\W,\underline{\Z})$, it is clear that we could also 
define a collection of cohomology groups ``$\mathbb{H}^k_F(\W,\underline{\R})$" using the c
onstant sheaf with values in $\R$. Then the map \[(\phi^{k0},\phi^{(k-1)1},\phi^{(k-2)2})\mapsto(H_{k0},H_{(k-1)1},H_{(k-2)2})\]
given above factors through the map similarly defined from $\mathbb{H}^k_F(\W,\underline{\R})$ to \\
$H^{k,(0,2)}_{F_2}(Z,\t^*)$. We believe this latter map gives an isomorphism 
$\mathbb{H}^2_F(\W,\underline{\R})\cong H^{2,(0,2)}_{F_2}(Z,\t^*)$, but, in absence of an inverse 
for the \v{C}ech-de Rham collating formula, we do not have a proof. Thus, we have chosen to avoid the issue.
\end{remark}

\begin{lemma}\label{cochainmap}
Let $Z$ be $C^\infty$ manifold, $\W$ a good open cover of $Z$, and $F\in \check{Z}^2(\W,\underline{\Z}^n)$. 
Let the image of $F$ in $\Omega^2(Z,\t)$ under the \v{C}ech-de Rham isomorphism be $F_2$. Then the map
\[C_{F}^k(\W,\underline{\Z})\to C_{F_2}^{k,(0,2)}(Z,\t^*)\]
defined above by
\[(\phi^{k0},\phi^{(k-1)1},\phi^{(k-2)2})\mapsto(H_{k0},H_{(k-1)1},H_{(k-2)2})\]
is a map of cochain complexes.
\end{lemma}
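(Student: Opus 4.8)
The plan is to verify directly that the assignment commutes with the two differentials $D_F$ and $D_{F_2}$, component by component, reading off the three output slots $H_{k0}$, $H_{(k-1)1}$, $H_{(k-2)2}$ separately. The underlying mechanism is the \v{C}ech--de Rham ``collating'' map of \cite[Prop 9.5]{BotTu82}: on its own it is a cochain map from $\check{C}^\bullet(\W,\underline{\R})$ to $\Omega^\bullet(Z)$, intertwining $\check\partial$ with $d$, so most of the work reduces to bookkeeping the extra correction terms that make each $H_{\cdot\cdot}$ a \emph{global} form despite $\phi^{(k-1)1}$ and $\phi^{k0}$ not being $\check\partial$-closed, and to matching the cup-product terms $\cup_1 F$, $\cup_2 C(F)$ against the wedge terms $\wedge F_2$.

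First I would treat the bottom slot: since $\check\partial\phi^{(k-2)2}$ is the bottom component of $D_F$ and $dH_{(k-2)2}$ is the bottom component of $D_{F_2}$, and $\phi^{(k-2)2}$ \emph{is} handled by the plain collating formula \eqref{F2rep}-style definition, this is exactly the standard \v{C}ech--de Rham cochain-map identity and needs no new input. Second, the middle slot: I must check that the form attached to $\check\partial\phi^{(k-1)1}+(-1)^k\phi^{(k-2)2}\cup_1 F$ equals $dH_{(k-1)1}+(-1)^{k-1}H_{(k-2)2}\wedge F_2$. Here the key computation is that the collating image of the integer cup product $\phi^{(k-2)2}\cup_1 F$ — whose formula \eqref{cupformula1} was explicitly written to mirror \eqref{wedgef1} — agrees with $H_{(k-2)2}\wedge F_2$ up to terms that cancel against the $\check\partial\phi^{(k-1)1}$-correction in the definition of $H_{(k-1)1}$; one expands $d\big(\sum \check\partial\phi^{(k-1)1}\rho_{\lambda_1}d\rho_{\lambda_2}\cdots\big)$, uses $\check\partial^2=0$ and $\sum_\lambda d\rho_\lambda=0$, and collects. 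This is the same pattern Bott--Tu use to prove the collating map is well defined, now carried one step further.

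The main obstacle is the top slot, $H_{k0}$, because — as the text flags — $\phi^{k0}$ receives a contribution from $\phi^{(k-2)2}$ through $\cup_2 C(F)$ that has no counterpart in the $D_{F_2}$ picture (compare Figures \ref{Mydifferential} and \ref{BHMdifferential}), so one needs that this spurious contribution is \emph{exact}, accounted for precisely by the correction term $(-1)^{k+1}D(\phi^{(k-2)2},C(F))$. Concretely, I would show
\[
d\,D(\phi^{(k-2)2},C(F))\big|_{W_{\lambda_0}} = \text{(collating image of }\phi^{(k-2)2}\cup_2 C(F)) - (\text{a }\wedge F_2\text{-type term already produced by }H_{(k-1)1}),
\]
the case split $k=2,3,>3$ in the definition of $D$ being exactly what is forced by where the $\rho$-factor can sit. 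The verification uses $C(F)=\check\partial(\text{something})$ in the relevant sense only implicitly; what one actually needs is Lemma~\ref{commutecoboudary}, i.e.\ that $C(F)$ measures the non-commutativity $F_i\cup F_j-F_j\cup F_i=\check\partial C(F)_{ij}$, together with the commutativity $F_2(X_i^*)\wedge F_2(X_j^*)=F_2(X_j^*)\wedge F_2(X_i^*)$ that makes $D_{F_2}^2=0$ — so on the de Rham side the $C(F)$-term must land in exact forms. I expect the bulk of the proof to be this identity for $D(\phi^{(k-2)2},C(F))$, with the remaining two slots following the classical Bott--Tu argument nearly verbatim; accordingly I would state the top-slot identity as a displayed sublemma, prove it by the $\sum d\rho_\lambda=0$ telescoping trick in each of the three ranges of $k$, and then simply assemble the three verified slot-identities to conclude that the square commutes.
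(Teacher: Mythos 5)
There is a genuine gap, and it lies in the proposal's central premise. You plan to verify that the assignment \emph{strictly} intertwines $D_F$ and $D_{F_2}$ slot by slot, and you dismiss the bottom (and essentially the middle) slot as ``the standard \v{C}ech--de Rham cochain-map identity'' requiring no new input. But the plain collating assignment $\phi\mapsto\sum\phi_{\lambda_0\dots}d\rho_{\lambda_1}\cdots d\rho_{\lambda_m}$ is \emph{not} a cochain map on bare \v{C}ech cochains: with $\underline{\Z}$-coefficients the cochains are locally constant, so $d$ applied to $H_{(k-2)2}|_{W_{\lambda_0}}$ vanishes identically, whereas the image of $\check\partial\phi^{(k-2)2}$ under the same recipe is $\sum_{\lambda_1,\dots,\lambda_{k-1}}\phi^{(k-2)2}_{\lambda_1\dots\lambda_{k-1}}d\rho_{\lambda_1}\cdots d\rho_{\lambda_{k-1}}$, which is generally nonzero when $\check\partial\phi^{(k-2)2}\neq 0$ (and for such cochains the local expressions for the $H$'s need not even glue to global forms). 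The Bott--Tu collating formula reconstructs a global form from a full zigzag in the \v{C}ech--de~Rham double complex; it does not intertwine $\check\partial$ with $d$ on individual \v{C}ech cochains, so the slot-by-slot commutation you propose to verify is false off the cocycles and cannot be the route to the lemma.

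What is actually proved in the appendix (and what Theorem \ref{AgreementwithBHM} needs) is the pair of weaker statements: $D_F$-cocycles map to $D_{F_2}$-cocycles (Lemma \ref{maptoBHM}) and $D_F$-coboundaries map to $D_{F_2}$-coboundaries (Lemma \ref{coboundarymaptoBHM}). The first is where your top-slot analysis belongs, and there your plan is on target: the $\cup_2\,C(F)$ contribution is shown to be exact via $dD(\phi^{(k-2)2},C(F))$, using Lemma \ref{commutecoboudary}, Lemma \ref{dczerodiffform}, the cocycle identity for $F$, and the $\sum_\lambda d\rho_\lambda=0$ telescoping, exactly as you anticipate. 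But the coboundary half is an independent argument that your proposal never addresses: one must exhibit an explicit $D_{F_2}$-primitive of $(H_{k0},H_{(k-1)1},H_{(k-2)2})$, built from ``$\lambda_0$-free'' collating-type expressions $H_{(k-1)0},H_{(k-2)1},H_{(k-3)2}$ together with an extra correction term $H(C)_{(k-1)0}$ involving $C(F)$, whose exterior derivative absorbs the discrepancy coming from $D(\check\partial\phi^{(k-3)2},C(F))$. Since strict commutation fails, this half does not follow from anything in your plan; you would need to add it (or reformulate the entire map through the full double-complex zigzag, which is a different construction from the one defined in the paper).
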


\begin{proof}
The proof of this lemma is a long, unenlightening calculation, and has thus been relegated to the appendix.
\end{proof}

\begin{proof}[Proof of Thm \ref{AgreementwithBHM}]
Exactness for both sequences has already been done inTheorem \ref{MyGysin} for the top row and 
Theorem \ref{BHMGysin} for the bottom. The downward arrow $\mathbb{H}^k_F(\W,\underline{\Z})\to 
H^{k,(0,2)}_{F_2}(Z,\t^*)$ is defined by
\[[\phi^{k0},\phi^{(k-1)1},\phi^{(k-2)2}]\mapsto [H_{k0},H_{(k-1)1},H_{(k-2)2}],\]
and the downward arrow $\overline{\mathbb{H}}^{k-1}_F(\W,\underline{\Z})\to H^{k,(1,2)}_{F_2}(Z,\t^*)$ is defined by
\[[\phi^{(k-1)1},\phi^{(k-2)2}]\mapsto [H_{(k-1)1},H_{(k-2)2}].\]
The remaining downward arrow is just the \v{C}ech-de Rham Collating formula
\[\phi\mapsto \sum_{\lambda_1\dots\lambda_k}\phi_{\lambda_0\dots\lambda_k}d\rho_{\lambda_1}\dots d\rho_{\lambda_k}.\]
For commutativity, the only square that is not immediate from the definitions is

\centerline{\xymatrix{
\overline{\mathbb{H}}^{k-1}_F(\W,\underline{\Z})\ar[r]\ar[d]& H^{k+1}(\W,\underline{\Z})\ar[d]\\
H^{k,(1,2)}_{F_2}(Z,\t^*)\ar[r]& H_{dR}^{k+1}(Z),
}}

\noindent which itself is quite nontrivial. Going anticlockwise around the diagram, we have that the image 
of a class $[\phi^{(k-1)1},\phi^{(k-2)2}]$ in $H_{dR}^{k+1}(Z)$ is the class of the differential $k+1$-form
\begin{align*}\label{antiHwedgeF}
(-1)^{k}H_{(k-1)1}\wedge F_2 \,,
\end{align*}
where we recall $F_2$ is the image of $F$ under the collating formula.

On the other hand, the image of $[\phi^{(k-1)1},\phi^{(k-2)2}]$ in $H^{k+1}(\W,\underline{\Z})$ is
\[[(-1)^{k+2}\phi^{(k-1)1}\cup_1F+(-1)^{k+2}\phi^{(k-2)2}\cup_2C(F)].\]
Thus, we need to check there is a differential $k$-form $\omega$ such that the image of
\[(-1)^{k+2}\phi^{(k-1)1}\cup_1F+(-1)^{k+2}\phi^{(k-2)2}\cup_2C(F)\,,\]
in de Rham cohomology, is equal to $(-1)^{k}H_{(k-1)1}\wedge F_2-d\omega$. However, 
in (\ref{cancelledterm}) to (\ref{difffirst}) in the appendix, it is shown that we can take $\omega$ to equal
\[\omega=(-1)^{k+1}D(\phi^{(k-2)2},C(F)) \,,\]
for this purpose.
\end{proof}

\section{Mathai-Rosenberg T-Duality}

We return to the applications of our work to T-duality of principal torus bundles by first recalling the 
definition of T-duality according to Mathai and Rosenberg \cite{MatRos05, MatRos06}. 
Let $\check{H}^3(X,\Z)|_{\pi^{0,3}=0}$ denote the kernel of the Serre spectral sequence 
projection $\pi^{0,3}:\check{H}^3(X,\underline{\Z})\to C(Z,\check{H}^3(\T^n,\underline{\Z}))$. 
Suppose that $\delta\in\check{H}^3(X,\underline{\Z})$, and let $CT(X,\delta)$ denote the unique 
(up to $C_0(X)$-linear isomorphism) stable continuous trace algebra with Dixmier-Douady class $\delta$. 
Then, if $(c,\delta)$ is a T-duality pair such that $\delta\in \check{H}^3(X,\underline{\Z})|_{\pi^{0,3}=0}$, 
Theorem 2.2 of \cite{MatRos06} implies there is an a action of $\R^n$ on $CT(X,\delta)$ such that the 
induced action of $\R^n$ on $X$ covers the $\T^n$-bundle action. The \emph{Mathai-Rosenberg T-dual} 
is by definition the $C^*$-algebra $CT(X,\delta)\rtimes_\alpha\R^n$.

The $C^*$-algebra $CT(X,\delta)\rtimes_\alpha\R^n$ can be interpreted as the algebra of sections of a 
noncommutative torus bundle over $Z$, such that, if $f\in C(Z,M^u_n(\T))$ is the Mackey 
obstruction map of $\alpha$, then the fibre above $z\in\Z$ is the stabilised noncommutative torus 
$A_{f(z)}\otimes \K$ \cite[Proof of Thm 3.1]{MatRos06}. Moreover, if $\check{H}^3(X,
\underline{\Z})|_{\pi^{1,2}=0}$ denotes the kernel of the Serre spectral sequence projection $\pi^{1,2}:
\check{H}^3(X,\underline{\Z})|_{\pi^{0,3}=0}\to \check{H}^1(Z,\check{H}^2(\T^n,\underline{\Z}))$, 
then if $\delta\in \check{H}^3(X,\underline{\Z})|_{\pi^{1,2}=0}$ the action $\alpha$ can be chosen 
to have trivial Mackey obstruction. This implies that the $C^*$-algebra $CT(X,\delta)\rtimes_\alpha\R^n$ 
has spectrum $\hat{X}$, such that the dual $\R^n$-action induces a classical principal torus bundle 
$\hat{\pi}:\hat{X}\to Z$.

\section{T-Duality for Principal Torus Bundles with H-Flux via the Integer Gysin Sequence}

We continue the T-duality discussion from the introduction. There we finished by describing how the 
Gysin sequence of Theorem \ref{BHMGysin} allows one to compute the T-dual curvature of a T-duality
pair $(c,\delta)\in\check{H}^2(Z,\underline{\Z}^n)\oplus \check{H}^3(X,\underline{\Z})$ as the image of $\delta$ under the 
composition of the dimensional reduction isomorphism and the Gysin sequence integration over the fibres map:
\[\delta\mapsto[H_3,H_2,H_1]\to [H_2,H_1]\in H_{F_2}^{3,(1,2)}(Z,\mathfrak{t}^*).\]
Since Theorem \ref{AgreementwithBHM} shows the Gysin sequence of Theorem \ref{MyGysin} agrees 
with the one from Theorem \ref{BHMGysin}, we seek to provide an analogue of the dimensional 
reduction isomorphism. Indeed, this theorem is proved in a companion paper:

\begin{theorem}[\cite{BouCarRat2}]\label{BouCarRat2Theorem}
Let $\pi:X\to Z$ be a principal $\T^n$-bundle over a Riemannian manifold $Z$, and denote by 
$\check{H}^3(X,\underline{\Z})|_{\pi^{0,3}=0}$ the kernel of the Serre spectral sequence projection 
$\pi^{0,3}:\check{H}^3(X,\underline{\Z})\to \check{H}^3(\T^n,\underline{\Z})$. Then there exists an 
open cover $\U$ of $X$ such that $\pi:X\to Z$ has Euler vector $[F]\in \check{H}^2(\pi(\U),\underline{\Z}^n)$, 
and such that every $c\in \check{H}^3(X,\underline{\Z})|_{\pi^{0,3}=0}$ has a representative $[H]\in 
\check{H}^3(\U,\underline{\Z})$. Moreover, there is an isomorphism
\[\check{H}^3(X,\underline{\Z})|_{\pi^{0,3}=0}\cong \mathbb{H}^3_F(\pi(\U),\underline{\Z}).\]
\end{theorem}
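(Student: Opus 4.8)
The plan is to realise both $\mathbb{H}^3_F(\pi(\U),\underline{\Z})$ and $\check H^3(X,\underline{\Z})|_{\pi^{0,3}=0}$ as the degree-$3$ cohomology of two quasi-isomorphic complexes, by first constructing $\U$ explicitly and then writing down an explicit map of cochain complexes relating the twisted complex $C^\bullet_F$ to the ordinary \v{C}ech complex of $\U$. The Riemannian structure plays no essential role in this purely topological statement; only the manifold/paracompactness is used, to guarantee the existence of good covers.

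First I would fix a good open cover $\W=\{W_\mu\}$ of $Z$ together with trivialisations $\pi^{-1}(W_\mu)\cong W_\mu\times\T^n$, shrinking $\W$ if necessary so that the transition functions $g_{\mu\nu}\colon W_{\mu}\cap W_{\nu}\to\T^n$ take values in small balls and admit lifts $\tilde g_{\mu\nu}$ to $\R^n$; then $F_{\mu\nu\rho}:=\tilde g_{\mu\nu}-\tilde g_{\mu\rho}+\tilde g_{\nu\rho}$ is a cocycle in $\check Z^2(\W,\underline{\Z}^n)$ representing the Euler vector. Next, fix once and for all a finite good cover $\V=\{V_b\}$ of the fibre $\T^n$, and record \v{C}ech cocycles $e_i\in\check Z^1(\V,\underline{\Z})$ and $m_{ij}\in\check Z^2(\V,\underline{\Z})$ representing the standard generators of $H^1(\T^n)\cong\Z^n$ and $H^2(\T^n)\cong\wedge^2\Z^n\cong M_n^u(\Z)$. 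The cover $\U$ is $\{U_{\mu b}\}$, where $U_{\mu b}\subset\pi^{-1}(W_\mu)$ is the preimage of $V_b$ under the chosen trivialisation; by the standard refinement argument $\U$ is a good cover of $X$, so $\check H^\bullet(\U,\underline{\Z})=\check H^\bullet(X,\underline{\Z})$ and every integral class of $X$ has a representative cocycle on $\U$, while $\pi(\U)=\{W_\mu\}$ (possibly with repeated members) still carries the cocycle $F$, as the statement requires.

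The core step is the comparison, over the index set of $\W$, of two complexes. On one side, $C^\bullet_F(\pi(\U),\underline{\Z})$ with its differential $D_F$ is the total complex of the multicomplex having the $\check\partial$-direction, the fibre-degree direction $q=0,1,2$, the page-two map $\cup_1 F$ (cup with the Euler cocycle), and the page-three map $\cup_2 C(F)$; here $C(F)$ is precisely Steenrod's cochain from Lemma \ref{commutecoboudary} measuring the non-symmetry of $F\cup F$, i.e.\ the secondary operation realising the $d_3$-transgression of a principal torus bundle. On the other side, $\U$ refines $\W$ via $(\mu,b)\mapsto\mu$, giving the \v{C}ech model of the Leray spectral sequence of $\pi$ with $E_1^{p,q}=\check C^p(\W,\underline{\wedge^q\Z^n})$, the fibre-cohomology monodromy being trivial since $\T^n$ is connected. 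I would then build an explicit cochain map $\Psi\colon C^\bullet_F(\pi(\U),\underline{\Z})\to\check C^\bullet(\U,\underline{\Z})$ sending a triple $(\phi^{k0},\phi^{(k-1)1},\phi^{(k-2)2})$ to the \v{C}ech cochain on $\U$ obtained by ``wedging'' $\phi^{k0}$ with the fibrewise unit, $\phi^{(k-1)1}$ with the cocycles $e_i$, and $\phi^{(k-2)2}$ with the cocycles $m_{ij}$, using the local fibre coordinates (a \v{C}ech connection) to globalise. The point is that $D_F$ is exactly what makes $\Psi$ a cochain map: applying $\check\partial$ on $\U$ to such a wedged cochain produces cross terms governed by how the $e_i$ and $m_{ij}$ transform under the $g_{\mu\nu}$, hence by the lift ambiguity $F$ and its Steenrod correction $C(F)$, and these reproduce $\cup_1 F$ and $\cup_2 C(F)$ with the expected signs.

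Finally I would deduce the isomorphism in degree $3$ from a comparison of spectral sequences. The image of $\Psi$ on $H^3$ visibly lies in the part of the Leray filtration with fibre degree $\le 2$, which on $\check H^3(X,\underline{\Z})$ is exactly $\ker\pi^{0,3}$. Since $\Psi$ is an isomorphism on $E_1$ in the range $q\le 2$ --- both sides are $\check C^p(\W,\underline{\wedge^q\Z^n})$ there, and the twisting maps match --- it is an isomorphism on the corresponding $E_\infty$ subquotients, and in total degree $3$ the only piece of $\check H^3(X,\underline{\Z})$ not accounted for is $E_\infty^{0,3}$, which is killed precisely by the hypothesis $c\in\check H^3(X,\underline{\Z})|_{\pi^{0,3}=0}$; a short diagram chase --- or the five lemma applied to the relevant segments of the integer Gysin sequence of Theorem \ref{MyGysin} and of the Leray filtration --- then upgrades this to an isomorphism of the degree-$3$ groups. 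I expect the main obstacle to be the bookkeeping in the cochain map $\Psi$: choosing $\V$ and the cocycles $e_i,m_{ij}$ so that the cross terms produced by $\check\partial$ land on the nose on $\cup_1 F$ and, more delicately, on $\cup_2 C(F)$ with the correct signs rather than merely on something cohomologous, together with verifying that the $d_3$-differential of the bundle really is represented by $C(F)$ itself.
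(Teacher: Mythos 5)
You should know at the outset that the paper does not prove Theorem \ref{BouCarRat2Theorem} at all: it is imported from the companion paper \cite{BouCarRat2}, where (as its title and the use made of it in the T-duality sections indicate) the isomorphism is obtained by $C^*$-algebraic means --- a Chern--Weil-type isomorphism for the equivariant Brauer group, using Mackey obstruction data for $\R^n$-actions on continuous-trace algebras --- so your purely topological \v{C}ech--Leray plan is in any case a different route and has to stand on its own. Judged that way, it has genuine gaps, the first being the cover. Your $\U=\{U_{\mu b}\}$, obtained by pulling a fixed good cover $\V$ of the fibre back through local trivialisations, is asserted to be good ``by the standard refinement argument'', but no such argument exists: an intersection $U_{\mu b}\cap U_{\nu c}$ fibres over the set $\{z\in W_\mu\cap W_\nu : V_b\cap(V_c-g_{\mu\nu}(z))\neq\emptyset\}$, i.e.\ over the preimage under the transition function $g_{\mu\nu}$ of an open subset of $\T^n$, which need not be connected, let alone contractible, and higher intersections are worse. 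Without goodness (or at least Leray acyclicity of all intersections) you can neither identify $\check{H}^\bullet(\U,\underline{\Z})$ with $\check{H}^\bullet(X,\underline{\Z})$, so that classes in $\ker\pi^{0,3}$ have representatives on $\U$, nor claim that the filtration of $\check{C}^\bullet(\U,\underline{\Z})$ by base indices has the $E_1$-page your comparison needs. Producing a cover with these properties is a substantive part of the assertion, and it is precisely where the Riemannian hypothesis you discard is expected to enter (e.g.\ via geodesically convex sets for a suitable invariant metric), so dismissing it is not safe.

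The second gap is that the two verifications you defer as ``bookkeeping'' are the actual content of the theorem. To define $\Psi$ you need a cochain-level product on the nerve of $\U$ (an Alexander--Whitney-type choice compatible with the twisting), and you need the fibre cocycles $e_i$, $m_{ij}$, transported through the different trivialisations, to produce cross terms equal to $\cup_1 F$ and $\cup_2 C(F)$ \emph{on the nose}, not merely up to coboundary; likewise the identification of the Serre $d_3$ of a principal $\T^n$-bundle with the Steenrod cochain $C(F)$ of Lemma \ref{commutecoboudary} is a nontrivial claim that is nowhere verified in your outline. Finally, passing from an isomorphism of $E_\infty$-subquotients in fibre degrees $q\le 2$ to the stated isomorphism $\mathbb{H}^3_F(\pi(\U),\underline{\Z})\cong\check{H}^3(X,\underline{\Z})|_{\pi^{0,3}=0}$ requires a convergent spectral sequence for the truncated complex together with injectivity of $\Psi$ in degree $3$, and the alternative finish you suggest --- the five lemma against the Gysin sequence of Theorem \ref{MyGysin} --- is circular as stated, since it presupposes the analogous, equally unproved, identification of $\overline{\mathbb{H}}^{2}_F(\pi(\U),\underline{\Z})$ with the corresponding piece of the Leray filtration. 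In short, the strategy is plausible in outline, but as written it is a programme rather than a proof, and its hardest steps are exactly the ones left open.
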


\begin{defn}
Let $(c,\delta)\in \check{H}^2(Z,\underline{\Z}^n)\oplus \check{H}^3(X,\underline{\Z})$ 
be a T-duality pair, and let  $\pi:X\to Z$ be a  principal 
$\T^n$-bundle classified by $c \in \check{H}^2(Z,\Z^n)$. Fix an open cover $\U$ and representatives 
$[F]\in\check{H}^2(\pi(\U),\underline{\Z}^n)$ and $[H]\in\check{H}^3(\U,\underline{\Z})$ of $c$ and $\delta$ respectively, and 
let $[\phi^{30},\phi^{21},\phi^{12}]$ be the image of $[H]$ in $\mathbb{H}^3_F(\pi(\U),\underline{\Z})$ 
under the isomorphism from Theorem \ref{BouCarRat2Theorem}. Then we define the \emph{T-dual 
Euler vector} of $(c,\delta)$ to be the class $\pi_*[\phi^{30},\phi^{21},\phi^{12}]=[\phi^{21},\phi^{12}]\in 
\overline{\mathbb{H}}^2_F(\pi(\U),\underline{\Z})$.
\end{defn}

Let us describe how this definition fits into existing work. First, we need a lemma from our companion paper 
\cite{BouCarRat2}:

\begin{lemma}[\cite{BouCarRat2}]\label{fibreagree}
Fix a T-duality pair $(c,\delta)$, an open cover $\U$ of $X$, and classes 
$[F]\in\check{H}^2(\pi(\U),\underline{\Z}^n)$ and $[H]\in\check{H}^3(\U,\underline{\Z})$ as in 
Theorem \ref{BouCarRat2Theorem}, and suppose that $[H]$ maps to 
$[H_3,H_2,H_1]\in\mathbb{H}^3(\pi(\U),\underline{\Z})$. Let $\alpha$ be 
an action of $\R^n$ on $CT(X,[H])$ with Mackey obstruction $f\in C(Z,\T^{n\choose 2})$. 
Then the homotopy class of $f$ in $\check{H}^1(Z,\underline{\Z}^{n\choose 2})$ is 
equal to $[H_1]\in\check{H}^1(Z,\underline{\Z}^{n\choose 2})$.
\end{lemma}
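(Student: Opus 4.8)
The plan is to unwind both sides of the claimed equality to concrete \v{C}ech cocycles and then match them. First I would recall, from the construction in \cite{BouCarRat2}, how the Mackey obstruction $f$ of the action $\alpha$ arises as a \v{C}ech cocycle: the action $\alpha$ of $\R^n$ on $CT(X,[H])$ restricted over a good cover gives, fibrewise, an action on $CT(\T^n,\delta|_{\mathrm{fibre}})$, and the Mackey obstruction records the failure of the lifts of this action to $\R^n$-equivariant form, producing a class $f\in C(Z,\T^{n\choose 2})$. The key point is that $f$ is locally constant up to homotopy — its homotopy class lives in $\check{H}^1(Z,\underline{\Z}^{n\choose 2})$ — and that this class is built from the transition data of $X$ together with the flux $[H]$. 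I would then identify this transition-plus-flux data with exactly the piece of $[H]$ that survives to the $\phi^{12}$-slot, i.e. with the component $H_1$ in the decomposition $[H]\mapsto[H_3,H_2,H_1]$ under the dimensional reduction isomorphism of Theorem \ref{BouCarRat2Theorem}.

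The key steps, in order, would be: (1) Set up a good cover $\U$ of $X$ and the induced cover $\pi(\U)$ of $Z$ compatible with the hypotheses of Theorem \ref{BouCarRat2Theorem}, so that both $[F]$ and $[H]$ have representatives supported there, and fix the representative $(\phi^{30},\phi^{21},\phi^{12})\in C^3_F(\pi(\U),\underline{\Z})$ of $[H]$. (2) Write down explicitly, following \cite[Proof of Thm 3.1]{MatRos06} and \cite[Thm 2.2]{MatRos06}, the \v{C}ech $1$-cocycle on $\pi(\U)$ with values in $\Z^{n\choose 2}$ representing the homotopy class of $f$; this will be expressed in terms of the exponentiated Mackey obstruction cocycle arising from the local trivialisations of the $\R^n$-action. (3) Show that this cocycle coincides (as an element of $\check{H}^1(Z,\underline{\Z}^{n\choose 2})$, i.e. up to \v{C}ech coboundary) with $\phi^{12}$, using the fact that $\check\partial\phi^{12}=0$ forces $\phi^{12}$ to represent a genuine degree-one class and that, under the identification $\mathcal{T}^2\cong$ sheaf of $M^u_n(\T)$-valued functions, the matrix entries of $f$ and the entries $\phi^{12}(\cdot)_{ij}$ are constructed from the same fibrewise holonomy/obstruction data. (4) Conclude that $[f]=[H_1]=[\phi^{12}]\in\check{H}^1(Z,\underline{\Z}^{n\choose 2})$.

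The main obstacle I expect is Step (3): bridging the analytic construction of the Mackey obstruction (which comes from comparing lifts of an $\R^n$-action on a continuous-trace algebra, i.e. from Green's imprimitivity/Mackey machinery) with the purely combinatorial $\phi^{12}$ appearing in the dimensionally reduced complex. Concretely, one must verify that when one localises $CT(X,[H])$ over a trivialising cover, restricts to fibres, and extracts the obstruction to equivariance, the resulting $M^u_n(\T)$-valued transition data is literally the $\mathcal{T}^2$-component of the image of $[H]$ under the isomorphism of Theorem \ref{BouCarRat2Theorem}. This is essentially forced by how that isomorphism is proved in \cite{BouCarRat2} — the $\phi^{12}$-slot is designed to capture precisely the ``degree $(1,2)$'' part of the flux, which is what obstructs a fibrewise-equivariant structure — but making the matching precise requires carefully tracking the cocycle formulas on triple and double overlaps and checking the sign and ordering conventions ($1\le i<j\le n$, strictly upper triangular) agree on both sides. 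Once that dictionary is in place, the equality of homotopy classes is immediate, since both are the same \v{C}ech $1$-cocycle.
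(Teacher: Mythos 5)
Your outline never actually crosses the bridge that constitutes the content of the lemma. The statement to be proved is precisely the identification of two objects of very different natures: on one side the Mackey obstruction $f$, which is defined operator-algebraically (pointwise over $z\in Z$ it is the obstruction in $H^2(\Z^n,\T)\cong\T^{n\choose 2}$ to lifting the fibrewise $\R^n$-action, produced by the Phillips--Raeburn/Mathai--Rosenberg machinery), and on the other side the combinatorial component $\phi^{12}$ of the image of $[H]$ under the isomorphism of Theorem \ref{BouCarRat2Theorem}. Your step (3) is exactly this identification, and at that point you write that it ``is essentially forced by how that isomorphism is proved in \cite{BouCarRat2}'' -- but that is the assertion of the lemma, not an argument for it; as written the proposal is circular, deferring the only nontrivial step to the very construction whose property is being claimed. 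Note also that the present paper does not prove this lemma either: it is imported from the companion paper \cite{BouCarRat2}, so the details you defer to are precisely the ones a proof must supply (e.g.\ via the filtration of $\check H^3(X,\underline\Z)$ coming from the Serre spectral sequence, identifying the $E^{1,2}$-component of $\delta$ simultaneously with $[\phi^{12}]$ and, through the analysis of the lifted $\R^n$-action on $CT(X,[H])$, with the homotopy class of $f$, together with an argument that the answer is independent of the choice of $\alpha$).

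A secondary, fixable gap is in step (2): the Mackey obstruction is a continuous map $f\colon Z\to\T^{n\choose 2}$, not in any a priori way a \v Cech $1$-cocycle on $\pi(\U)$. To compare it with $\phi^{12}$ you must first make explicit the isomorphism $[Z,\T^{n\choose 2}]\cong\check H^1(Z,\underline{\Z}^{n\choose 2})$ -- choose local lifts of $f$ to $\R^{n\choose 2}$-valued functions over the cover and take their differences on double overlaps -- and then show that these integer-valued differences agree, up to \v Cech coboundary, with the entries $\phi^{12}(\cdot)_{ij}$ coming from the dimensional-reduction isomorphism. Without writing out how $\alpha$ (whose existence is an abstract consequence of \cite[Thm 2.2]{MatRos06}) interacts with a trivialising cover of $X$ and with the chosen representative $(\phi^{30},\phi^{21},\phi^{12})$, the ``dictionary'' you invoke does not exist yet, and the concluding step (4) has nothing to rest on.
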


Thus, if $H_1\neq 0$  the T-duality pair $(c,\delta)$ is not T-dual to a 
classical principal torus bundle. Instead, the comments in the previous section tell us that 
the bundle T-dual to $(c,\delta)$ is a noncommutative torus bundle. One thinks of the class 
$[H_2,H_1]$ as the Euler vector of this noncommutative torus bundle, with the cochains 
$H_1$ and $H_2$ describing the parameters of the noncommutative torus fibre and bundle ``twisting", respectively. 
Indeed, Lemma \ref{fibreagree} justifies the first statement, whilst the following lemma justifies 
the second (see also \cite{BHMa}):

\begin{lemma}[\cite{BouCarRat2}]
If $[H]\in\check{H}^3(X,\underline{\Z})|_{\pi^{0,3}=0}$ maps to a class of the form 
$[H_3,H_2,0]\in\mathbb{H}^3(\pi(\U),\underline{\Z})$, then the action $\alpha$ can 
be chose to have trivial Mackey obstruction. In this case, $CT(X,[H])\rtimes_\alpha\R^n$ 
can be viewed as the algebra of sections of a (commutative) principal $\T^n$-bundle that is 
classified by $[H_2]\in \check{H}^2(\pi(\U),\underline{\Z}^n)$.
\end{lemma}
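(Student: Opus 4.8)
The plan is to split the statement into its two assertions --- that $\alpha$ may be chosen with trivial Mackey obstruction, and that the resulting crossed product is a continuous-trace algebra over the principal $\T^n$-bundle classified by $[H_2]$ --- and to reduce each to machinery already in hand: Lemma \ref{fibreagree}, the Mathai--Rosenberg structure theory recalled above, and the explicit construction of the $\R^n$-action carried out in \cite{BouCarRat2}.

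For the first assertion I would begin by unwinding the hypothesis. For any representative $(\phi^{30},\phi^{21},\phi^{12})$ of the image of $[H]$ in $\mathbb{H}^3_F(\pi(\U),\underline{\Z})$ the component $\phi^{12}$ is $\check\partial$-closed (the triple being $D_F$-closed), and the $D_F$-coboundary of a degree-$2$ cochain $(\psi^{20},\psi^{11},\psi^{02})$ alters $\phi^{12}$ only by $\check\partial\psi^{02}$; hence ``$[H]$ maps to a class of the form $[H_3,H_2,0]$'' is precisely the condition $[H_1]:=[\phi^{12}]=0$ in $\check{H}^1(Z,\underline{\Z}^{n\choose 2})$, and we may choose a representative with $\phi^{12}=0$. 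Now $[H_1]=0$ forces $\pi^{1,2}(\delta)=0$ --- either from Lemma \ref{fibreagree} together with the Mathai--Rosenberg identification of the homotopy class of the Mackey obstruction with $\pi^{1,2}(\delta)$, or from the compatibility of the isomorphism of Theorem \ref{BouCarRat2Theorem} with the Serre filtration. Thus $\delta\in\check{H}^3(X,\underline{\Z})|_{\pi^{1,2}=0}$, and the Mathai--Rosenberg dichotomy recalled above supplies an $\alpha$ with trivial Mackey obstruction, for which moreover the crossed product has spectrum a principal $\T^n$-bundle $\hat\pi:\hat X\to Z$.

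For the second assertion, with $\alpha$ of trivial Mackey obstruction the fibre of $CT(X,[H])\rtimes_\alpha\R^n$ over $z\in Z$ is the stabilised \emph{commutative} torus $A_0\otimes\K\cong C(\T^n)\otimes\K$, so by \cite[Proof of Thm 3.1]{MatRos06} the crossed product is a continuous-trace algebra over $\hat X$ --- this is the meaning of ``(commutative)'' in the statement. It remains to identify the Euler vector of $\hat\pi:\hat X\to Z$ with $[H_2]$. With $\phi^{12}=0$, the relation on the $\Z^n$-component forced by $D_F$-closedness in degree $3$ reads $\check\partial\phi^{21}=\phi^{12}\cup_1 F=0$, so $\phi^{21}\in\check{Z}^2(\pi(\U),\underline{\Z}^n)$ represents $[H_2]$. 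The substantive step is then to trace the construction of $\alpha$ in \cite{BouCarRat2}: the local trivialising data of $CT(X,[H])$, together with the exponentiation of the cocycle data used to define $\alpha$, descend on forming the crossed product to a $\T^n$-valued clutching cocycle for $\hat X$ whose class is $[\phi^{21}]$. Hence $\hat X\to Z$ has Euler vector $[\phi^{21}]=[H_2]$; equivalently, the honest Mathai--Rosenberg dual bundle realises the $\pi_*$-image $[\phi^{21},0]\in\overline{\mathbb{H}}^2_F(\pi(\U),\underline{\Z})$, in accordance with the definition of the T-dual Euler vector.

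The routine half is the first assertion, which amounts to Lemma \ref{fibreagree} combined with the $\pi^{1,2}$ criterion. The main obstacle is the last step of the second assertion: one must open up both the Mathai--Rosenberg crossed product and the concrete $\R^n$-action of \cite{BouCarRat2}, and check that after the exponentiation the local section data reduces to exactly $\phi^{21}$ modulo \v{C}ech coboundaries --- the delicate points being the dependence of this bookkeeping on the chosen principal connection and local sections, and its compatibility with the Gysin push-forward $\pi_*$. It is here that the detailed cocycle computations of the companion paper are indispensable.
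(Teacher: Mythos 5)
A preliminary remark: the paper itself contains no proof of this lemma --- it is quoted wholesale from the companion paper \cite{BouCarRat2} --- so there is no in-paper argument to compare your route against; your proposal can only be assessed on its own terms. On those terms, your bookkeeping is correct: the third entry of a $D_F$-closed triple is $\check\partial$-closed and moves only by $\check\partial\psi^{02}$ under $D_F$-coboundaries, so the hypothesis is indeed equivalent to $[H_1]=0$, and after choosing $\phi^{12}=0$ the degree-three cocycle condition forces $\check\partial\phi^{21}=0$, making $\phi^{21}$ an honest representative of $[H_2]\in\check{H}^2(\pi(\U),\underline{\Z}^n)$. However, your first assertion is only \emph{reduced}, not proved: to invoke the Mathai--Rosenberg dichotomy you must show $\pi^{1,2}(\delta)=0$, and neither bridge you offer is available from this paper. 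Lemma \ref{fibreagree} identifies the homotopy class of the Mackey obstruction of a \emph{given} action $\alpha$ with $[H_1]$; it does not by itself place $\delta$ in $\check{H}^3(X,\underline{\Z})|_{\pi^{1,2}=0}$, and both the identification of that homotopy class with $\pi^{1,2}(\delta)$ and the compatibility of the isomorphism of Theorem \ref{BouCarRat2Theorem} with the Leray--Serre filtration are nontrivial inputs established (if at all) only in \cite{MatRos06} and \cite{BouCarRat2}.

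The more substantive gap is the second assertion, which is the actual content of the lemma: that the spectrum $\hat{X}$ of $CT(X,[H])\rtimes_\alpha\R^n$ is the principal $\T^n$-bundle with Euler vector $[\phi^{21}]=[H_2]$. Your proposal handles this by saying one should ``trace the construction of $\alpha$'' in \cite{BouCarRat2} and explicitly concedes that the companion paper's cocycle computations are indispensable; that is a description of what remains to be done, not a derivation. Nothing proved in the present paper --- Lemma \ref{fibreagree}, Theorem \ref{BouCarRat2Theorem}, or the Gysin sequence of Theorem \ref{MyGysin} --- identifies the clutching data of $\hat{X}$ with $\phi^{21}$ modulo coboundaries, and this identification is exactly where the choices of local trivialisations, lifts of the $\T^n$-action to $\R^n$, and the Čech representatives interact delicately. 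So while your outline is consistent with the recalled Mathai--Rosenberg machinery and correctly isolates the hard point, as a proof it leaves the lemma's key claim unestablished --- which is presumably why the authors themselves cite \cite{BouCarRat2} rather than argue it here.
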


\appendix

\section{Proof of Lemma \ref{cochainmap}}

\begin{lemma}\label{dczerodiffform}
Let $Z$ be $C^\infty$ manifold, $\W=\{W_{\lambda_0}\}$ an open cover of $Z$, and 
$A,B\in \check{Z}^2(\W,\underline{\Z})$ two cocycles. Let $C\in \check{C}^3(\W,\underline{\Z})$ 
be any 3-cochain that satisfies
\[A\cup B-B\cup A=\check\partial C.\]
Then the image of $\check\partial C$ in $\Omega^4(Z)$ under the collating formula is identically zero.
\end{lemma}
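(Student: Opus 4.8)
The plan is not to compute with the $3$-cochain $C$ itself, but to observe that the conclusion concerns only the fixed $4$-cochain $\check\partial C = A\cup B - B\cup A$, which depends on $A$ and $B$ alone: any two admissible $C$ produce the same $\check\partial C$ (they differ by a \v{C}ech $3$-cocycle), so the hypothesis ``any $C$'' is immaterial here, its existence being guaranteed by Lemma~\ref{commutecoboudary}. Regarding $A,B,C$ as cochains with values in $\underline\R$ via $\Z\hookrightarrow\R$, and noting that $A,B$ are cocycles, both $A\cup B$ and $B\cup A$ are \v{C}ech cocycles, hence so is $\check\partial C$; the \v{C}ech--de Rham collating formula therefore assigns $\check\partial C$ a \emph{well-defined} global form $\omega_{\check\partial C}\in\Omega^4(Z)$. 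Being linear in the cochain, the collating map gives $\omega_{\check\partial C} = \omega_{A\cup B} - \omega_{B\cup A}$, where $\omega_{A\cup B},\omega_{B\cup A}$ are the collating images of the two cup products; I write $A_2,B_2\in\Omega^2(Z)$ for the collating images of $A$ and $B$ themselves (the curvature-type forms of Equation~(\ref{F2rep})).

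The crux is the product identity $\omega_{A\cup B} = A_2\wedge B_2$, together with its mirror image $\omega_{B\cup A} = B_2\wedge A_2$. To establish the first, fix a partition of unity $\{\rho_\lambda\}$ subordinate to $\W$ (available by Lemma~\ref{PartitionExistence}) and compare the two sides on a patch $W_{\lambda_0}$ using $A_2|_{W_{\lambda_0}} = \sum_{\mu_1,\mu_2}A_{\lambda_0\mu_1\mu_2}\,d\rho_{\mu_1}d\rho_{\mu_2}$, the analogous formula for $B_2$, and the standard convention $(A\cup B)_{\lambda_0\mu_1\mu_2\mu_3\mu_4} = A_{\lambda_0\mu_1\mu_2}B_{\mu_2\mu_3\mu_4}$. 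The difference $\omega_{A\cup B} - A_2\wedge B_2$ is then a fourfold sum with summand $A_{\lambda_0\mu_1\mu_2}\bigl(B_{\mu_2\mu_3\mu_4} - B_{\lambda_0\mu_3\mu_4}\bigr)\,d\rho_{\mu_1}d\rho_{\mu_2}d\rho_{\mu_3}d\rho_{\mu_4}$. The cocycle relation $\check\partial B(\lambda_0,\mu_2,\mu_3,\mu_4) = 0$ rewrites the bracket as $B_{\lambda_0\mu_2\mu_3} - B_{\lambda_0\mu_2\mu_4}$; in each of the two resulting sums exactly one summation index ($\mu_4$ in the first, $\mu_3$ in the second) appears only inside a single $d\rho$, so summing that index out yields $\sum_\mu d\rho_\mu = d\!\left(\sum_\mu\rho_\mu\right) = d(1) = 0$. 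Hence $\omega_{A\cup B} = A_2\wedge B_2$, and exchanging $A$ with $B$ gives $\omega_{B\cup A} = B_2\wedge A_2$.

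With both identities in hand the lemma is immediate: $A_2$ and $B_2$ have even degree $2$, so $A_2\wedge B_2 = B_2\wedge A_2$, whence
\[
\omega_{\check\partial C} \;=\; \omega_{A\cup B} - \omega_{B\cup A} \;=\; A_2\wedge B_2 - B_2\wedge A_2 \;=\; 0.
\]

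I expect the only genuine work to be the verification of $\omega_{A\cup B} = A_2\wedge B_2$: it is essentially a one-line manipulation, but it is the step that must invoke the cocycle identity for $B$ in precisely the right slot and must recognise the telescoping $\sum_\mu d\rho_\mu = 0$; everything else (linearity of the collating map, that cup products of cocycles are cocycles, graded-commutativity of $\wedge$ in even degree) is formal. A more pedestrian alternative would be to substitute the Steenrod expression for $C$ from Lemma~\ref{commutecoboudary} directly into the collating formula and collapse the resulting sum using $\sum_\mu d\rho_\mu = 0$; this works too, but routing through the product identity is cleaner and explains conceptually why the form vanishes --- it is the commutator of two $2$-forms.
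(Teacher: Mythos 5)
Your proof is correct and takes essentially the same route as the paper's: expand the collating image of $A\cup B-B\cup A$ patchwise via the cup-product formula, apply the cocycle identities for $A$ and $B$, and kill the leftover terms with $\sum_\mu d\rho_\mu=0$. Your intermediate identity $\omega_{A\cup B}=A_2\wedge B_2$ (so the form is the commutator of two $2$-forms) just makes explicit what the paper's terse one-line proof leaves implicit.
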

\begin{proof}
Follows from the cocycle identity for $A$ and $B$, the fact that the restriction of the 
image of $\check\partial C=A\cup B-B\cup A$ to the set $W_{\lambda_0}$ is given by
\begin{equation}\label{ABminusBA}
\sum_{\lambda_1\dots\lambda_4}\left(A_{\lambda_0\lambda_1\lambda_2}B_{\lambda_2\lambda_3
\lambda_4}-B_{\lambda_0\lambda_1\lambda_2}A_{\lambda_2\lambda_3\lambda_4}
\right)d\rho_{\lambda_1} \dots d\rho_{\lambda_4},
\end{equation}
where $\{\rho_{\lambda_0}\}$ is a partition of unity subordinate to $\W$, and the fact that
\[\sum_{\mu}(d\rho_{\mu})=d\left(\sum_{\mu}\rho_{\mu}\right)=d1=0\,.\]
\end{proof}

\begin{lemma}\label{maptoBHM}
Let $Z$ be $C^\infty$ manifold, $\W$ a good open cover of $Z$, and 
$F\in \check{Z}^2(\W,\underline{\Z}^n)$. Let the image of $F$ in $\Omega^2(Z,\t)$ 
under the \v{C}ech-de Rham isomorphism be $F_2$. Then the map
\[C_{F}^k(\W,\underline{\Z})\to C_{F_2}^{k,(0,2)}(Z,\t^*)\,, \]
defined by (see Section \ref{BHMwork})
\[(\phi^{k0},\phi^{(k-1)1},\phi^{(k-2)2})\mapsto(H_{k0},H_{(k-1)1},H_{(k-2)2})\,, \]
maps $D_F$-cocycles to $D_{F_2}$-cocycles.
\end{lemma}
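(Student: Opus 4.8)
The plan is to verify, for an arbitrary $D_F$-cocycle $(\phi^{k0},\phi^{(k-1)1},\phi^{(k-2)2})$, the three cocycle equations of Example \ref{l=2example} for the triple $(H_{k0},H_{(k-1)1},H_{(k-2)2})$, computing $d$ of each component locally on a chart $W_{\lambda_0}$. Two features of the collating formula do all the work: a locally constant \v{C}ech cochain that is $\check\partial$-closed collates to a (trivially) closed form, and $d$ of the ``lower-order'' collating sum $\sum_{\lambda_1\dots\lambda_k}\psi_{\lambda_0\dots\lambda_k}\rho_{\lambda_1}d\rho_{\lambda_2}\cdots d\rho_{\lambda_k}$ equals the ordinary collating sum of $\check\partial\psi$. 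Throughout, the identities $\check\partial F=0$ and $\sum_\lambda d\rho_\lambda=d1=0$ are what convert collated cup products into honest wedge products, exactly as in Lemma \ref{dczerodiffform}.

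The top equation $dH_{(k-2)2}=0$ is immediate: being $D_F$-closed forces $\check\partial\phi^{(k-2)2}=0$, so $H_{(k-2)2}$ is the \v{C}ech--de Rham image of a cocycle, hence closed. For the middle equation I would differentiate the two-term definition of $H_{(k-1)1}$: the leading collating term is locally constant so $d$ kills it, while the correction term contributes the collating sum of $\check\partial\phi^{(k-1)1}$. The cocycle condition gives $\check\partial\phi^{(k-1)1}=(-1)^{k+1}\phi^{(k-2)2}\cup_1 F$, and the key sublemma is that collating $\phi^{(k-2)2}\cup_1 F$ yields precisely $H_{(k-2)2}\wedge F_2$: one splits the collating sum over the three indices carried by $F$, uses $\check\partial F=0$ to replace them by a copy of $(F_2)_i|_{W_{\lambda_0}}$ from \eqref{F2rep} (the unwanted terms vanish since $\sum d\rho=0$), and matches the outcome against \eqref{wedgef1}. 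The signs then combine to give $dH_{(k-1)1}+(-1)^{k-2}H_{(k-2)2}\wedge F_2=0$.

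The bottom equation $dH_{k0}+(-1)^{k-1}H_{(k-1)1}\wedge F_2=0$ is the substantive one, and I expect it to be the main obstacle. Differentiating the three-term expression for $H_{k0}$: the first term vanishes, the second gives the collating sum of $\check\partial\phi^{k0}$, and the third gives $(-1)^{k+1}dD(\phi^{(k-2)2},C(F))$. Feeding in $\check\partial\phi^{k0}=(-1)^k(\phi^{(k-1)1}\cup_1 F+\phi^{(k-2)2}\cup_2 C(F))$ and repeating the $\check\partial F=0$, $\sum d\rho=0$ manipulation on $\phi^{(k-1)1}\cup_1 F$, one finds that collating $\phi^{(k-1)1}\cup_1 F$ equals $H_{(k-1)1}\wedge F_2$ \emph{minus} a correction term built from $\check\partial\phi^{(k-1)1}$ --- precisely the discrepancy caused by $\phi^{(k-1)1}$ not being $\check\partial$-closed. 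The claim thus reduces to a single identity expressing $(-1)^{k+1}dD(\phi^{(k-2)2},C(F))$ as $(-1)^k$ times the difference of $\mathrm{collate}(\phi^{(k-2)2}\cup_2 C(F))$ and that correction wedged with $F_2$.

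Establishing this last identity is the long, unenlightening calculation carried out in \eqref{cancelledterm}--\eqref{difffirst}. The inputs are: the defining property of $C(F)$ from Lemma \ref{commutecoboudary}, namely $F_i\cup F_j-F_j\cup F_i=\check\partial C(F)$ componentwise, together with the observation of Lemma \ref{dczerodiffform} that such a coboundary collates to zero in $\Omega^\bullet$; the graded-derivation identities of Lemma \ref{gradedderivation}; and the comparison of \eqref{cupformula1} and \eqref{cupformula2} with \eqref{wedgef1} and \eqref{wedgef2}. The delicate points are bookkeeping the partition-of-unity factors $\rho$ and $d\rho$ through the three-case definition of $D(\phi^{(k-2)2},C(F))$ (the cases $k=2$, $k=3$, $k>3$ behave slightly differently), keeping every telescoping $\sum d\rho=0$ cancellation straight, and tracking the signs through this case split. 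Once the identity is in hand, all three cocycle equations hold and the lemma follows.
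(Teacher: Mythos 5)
Your overall strategy is the same as the paper's: work locally on $W_{\lambda_0}$ with the collating formula, use that constant-coefficient collating sums are killed by $d$ while $d$ of the $\rho_{\lambda_1}d\rho_{\lambda_2}\cdots$ correction reproduces the collating sum of the \v{C}ech coboundary, and convert collated cup products into wedge products via $\check\partial F=0$ together with $\sum_\mu d\rho_\mu=0$. Your treatment of $dH_{(k-2)2}=0$ and of the middle equation, including the sublemma that the collating sum of $\phi^{(k-2)2}\cup_1 F$ equals $H_{(k-2)2}\wedge F_2$, is exactly the paper's argument, with the correct signs.

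For the bottom equation, however, two points need attention. First, your stated reduction has a sign error: writing $\mathrm{corr}:=H_{(k-1)1}\wedge F_2-\mathrm{collate}(\phi^{(k-1)1}\cup_1 F)$, the identity actually needed is $dD(\phi^{(k-2)2},C(F))=\mathrm{collate}(\phi^{(k-2)2}\cup_2 C(F))-\mathrm{corr}$, i.e.\ $(-1)^{k+1}dD=(-1)^{k+1}\bigl(\mathrm{collate}(\phi^{(k-2)2}\cup_2 C(F))-\mathrm{corr}\bigr)$, whereas you assert the prefactor $(-1)^{k}$; as stated your identity is false, and since the remaining work is precisely sign bookkeeping this matters. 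Second, and more substantively, you leave the proof of this identity as ``the long calculation'' without identifying the two mechanisms that make it true, which are the real content of the paper's proof at this point: (i) after substituting the cocycle relation $\check\partial\phi^{(k-1)1}=(-1)^{k+1}\phi^{(k-2)2}\cup_1 F$, the term $\mathrm{corr}$ vanishes identically, because the resulting expression is a sum of products $\phi^{(k-2)2}_{ij}\,F_{\cdot\,\cdot\,\cdot\,i}F_{\cdot\,\cdot\,\cdot\,j}-(i\leftrightarrow j)$ that cancels under the even swap of the two pairs of summation indices carried by the two copies of $F$; and (ii) $dD(\phi^{(k-2)2},C(F))|_{W_{\lambda_0}}$ equals on the nose the collating sum of $\phi^{(k-2)2}\cup_2 C(F)$, which requires Lemma \ref{dczerodiffform}, the derivation property of $\check\partial$ with respect to $\cup_2$, the closedness $\check\partial\phi^{(k-2)2}=0$, and two further index-swap vanishing identities tailored to the (case-dependent) definition of $D(\phi^{(k-2)2},C(F))$. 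Your list of generic inputs is the right toolbox, but without these two specific cancellations the ``single identity'' is not visibly closable, so the crux of the lemma remains unargued in your proposal.
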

\begin{proof}
Suppose that the triple $(\phi^{k0},\phi^{(k-1)1},\phi^{(k-2)2})$ is closed under 
$D_F$ and has image $[(H_{k0},H_{(k-1)1},H_{(k-2)2})$. 
Straight from the definitions one can see that $dH_{(k-2)2}=0$. Next, since 
the cocycle identity for $[\phi^{k0},\phi^{(k-1)1},\phi^{(k-2)2}]$ implies
\begin{align*}
H_{(k-1)1}=&\sum_{\lambda_1,\dots,\lambda_{k-1}}
\phi^{(k-1)1}_{\lambda_0\dots\lambda_{k-1}}(\cdot)_{l}d\rho_{\lambda_1}\dots d\rho_{\lambda_{k-1}}\\
&\quad+(-1)^{k+1}\sum_{\lambda_{1},\dots,\lambda_k}(\phi^{(k-2)2}\cup_1 F)_{\lambda_{0}
\dots\lambda_k}(\cdot)_l\rho_{\lambda_1}d\rho_{\lambda_2}\dots d\rho_{\lambda_k},
\end{align*}
we need to check if
\begin{align}
d(H_{(k-1)1})_l|_{W_{\lambda_0}}=&(-1)^{k+1}\sum_{\lambda_{1},\dots,\lambda_k}(\phi^{(k-2)2}
\cup_1 F)_{\lambda_{0}\lambda_1\dots\lambda_k}(\cdot)_ld\rho_{\lambda_1} 
d\rho_{\lambda_2}\dots d\rho_{\lambda_k} \notag\\
\label{diffmiddle}=& (-1)^{k+1}(H_{(k-2)2}\wedge F_2)_l|_{W_{\lambda_0}}.
\end{align}
By definition, $\phi^{(k-2)2}\cup_1 F$ is the \v{C}ech $k$-cocycle with values on 
$U^0_{\lambda_0\dots\lambda_{k}}$ in the $l^{th}$-component of $\Z^n$ given by
\[z\mapsto \sum_{1\leq i<j\leq n} \phi^{(k-2)2}_{\lambda_0\dots\lambda_{k-2}}(z)_{ij}
(F_{\lambda_{k-2}\lambda_{k-1}\lambda_k}(z)_i(e_l)_j-(e_l)_i
F_{\lambda_{k-2}\lambda_{k-1}\lambda_k}(z)_j).\]
When restricted to the set $W_{\lambda_0}$, this cocycle has $l^{th}$-component 
image the differential form:
\begin{equation}\label{H1LHS}
\sum_{\stackrel{1\leq i<j\leq n}{\lambda_1,\dots,\lambda_k}} \phi^{(k-2)2}_{\lambda_0
\dots\lambda_{k-2}}(z)_{ij}(F_{\lambda_{k-2}\lambda_{k-1}\lambda_k}(z)_i(e_l)_j-
(e_l)_iF_{\lambda_{k-2}\lambda_{k-1}\lambda_k}(z)_j)d\rho_{\lambda_1} 
d\rho_{\lambda_2}\dots d\rho_{\lambda_k}.
\end{equation}
On the other hand, the right hand side of Equation (\ref{diffmiddle}) is
\begin{align}
&(H_{(k-2)2}\wedge F_2)_l|_{W_{\lambda_0}}\notag\\
=&\sum_{1\leq i<j\leq n}(H_{(k-2)2})_{ij}|_{W_{\lambda_0}}\wedge\big[(F_2)_i
|_{W_{\lambda_0}}(e_l)_j-(F_2)_j |_{W_{\lambda_0}}(e_l)_i\big]\notag\\
\label{stage}=&\sum_{\stackrel{1\leq i<j\leq n}{\lambda_1,\dots,\lambda_k}}\phi^{(k-2)2}_{\lambda_0
\dots\lambda_{k-2}}(\cdot)_{ij}\left( F_{\lambda_0\lambda_{k-1}\lambda_k}(\cdot)_i(e_l)_j-
F_{\lambda_0\lambda_{k-1}\lambda_k}(\cdot)_j(e_l)_i\right)d\rho_{\lambda_1} \dots d\rho_{\lambda_k}.
\end{align}
To help us deal with Equation (\ref{stage}), we claim that
\begin{align*}
&\sum_{\stackrel{1\leq i<j\leq n}{\lambda_1,\dots,\lambda_k}}\phi^{(k-2)2}_{\lambda_0
\dots\lambda_{k-2}}(\cdot)_{ij}(F_{\lambda_0\lambda_{k-2}\lambda_k}-
F_{\lambda_0\lambda_{k-2}\lambda_{k-1}})(\cdot)_i(e_l)_j\\
&\quad -(F_{\lambda_0\lambda_{k-2}\lambda_k}-F_{\lambda_0\lambda_{k-2}\lambda_{k-2}})
(\cdot)_j(e_l)_id\rho_{\lambda_1} \dots  d\rho_{\lambda_k}=0.
\end{align*}
Indeed, if we expanded the above, each term $\phi^{(k-2)2}_{\lambda_0\dots\lambda_{k-2}}
(\cdot)_{ij}F_{\lambda_0\lambda_{k-2}\lambda_{\bullet}}$ would be missing either $\lambda_{k-1}$ 
or $\lambda_k$. Then our claim follows from the fact that
\[\sum_\mu d\rho_\mu=d\left(\sum_\mu \rho_\mu\right)=0.\]
Therefore, the fact that 
$F_{\lambda_0\lambda_{k-1}\lambda_k}=F_{\lambda_{k-2}\lambda_{k-1}\lambda_k}
+F_{\lambda_0\lambda_{k-2}\lambda_k}-F_{\lambda_0\lambda_{k-2}\lambda_{k-1}}$, 
implies Equation (\ref{stage}) is equal to
\[\sum_{\stackrel{1\leq i<j\leq n}{\lambda_1,\dots,\lambda_k}}\phi^{(k-2)2}_{\lambda_0
\dots \lambda_{k-2}}(\cdot)_{ij}\left( F_{\lambda_{k-2}\lambda_{k-1}\lambda_k}(\cdot)_i(e_l)_j-
F_{\lambda_{k-2}\lambda_{k-1}\lambda_k}(\cdot)_j(e_l)_i\right)d\rho_{\lambda_1} \dots 
d\rho_{\lambda_k}.\]
The above is exactly Equation (\ref{H1LHS}), and therefore the left and right hand sides of 
Equation (\ref{diffmiddle}) agree.

Lastly, since the cocycle identity for $[\phi^{k0},\phi^{(k-1)1},\phi^{(k-2)2}]$ implies
\begin{align*}
(H_{k0})|_{W_{\lambda_0}}=&\sum_{\lambda_1,\dots,\lambda_{k-1}}\phi^{k0}_{\lambda_0\dots
\lambda_{k-1}}(\cdot)d\rho_{\lambda_1}\dots d\rho_{\lambda_{k-1}}\\
&\quad +(-1)^{k+2}\sum_{\lambda_{1},\dots,\lambda_k}(\phi^{(k-1)1}\cup_1 F)_{\lambda_{0}
\dots\lambda_{k+1}}(\cdot)\rho_{\lambda_1}d\rho_{\lambda_2}\dots d\rho_{\lambda_{k+1}}\\
&\quad +(-1)^{k+2}\sum_{\lambda_{1},\dots,\lambda_k}(\phi^{(k-2)2}\cup_2 C(F))_{\lambda_{0}
\dots\lambda_{k+1}}(\cdot)\rho_{\lambda_1}d\rho_{\lambda_2}\dots d\rho_{\lambda_{k+1}}\\
&\quad +(-1)^{k+1}D(\phi^{(k-2)2}, C(F))|_{W_{\lambda_0}},
\end{align*}
we need to check if
\begin{align}
\label{cancelledterm}d(H_{k0})|_{W_{\lambda_0}}=&(-1)^{k+2}\sum_{\stackrel{\lambda_{1},
\dots,\lambda_k}{l=1,\dots, n}}(\phi^{(k-1)1}\cup_1 F)_{\lambda_{0}\dots\lambda_{k+1}}(\cdot)_l
d\rho_{\lambda_1}\dots d\rho_{\lambda_{k+1}}\\
&\quad +(-1)^{k+2}\sum_{\stackrel{1\leq i<j\leq n}{\lambda_{1},\dots,\lambda_k}}(\phi^{(k-2)2}\cup_2 
C(F))_{\lambda_{0}\dots\lambda_{k+1}}(\cdot)_{ij}d\rho_{\lambda_1}\dots d\rho_{\lambda_{k+1}}\notag\\
&\quad +(-1)^{k+1}dD(\phi^{(k-2)2},C(F))|_{W_{\lambda_0}}\notag\\
\label{difffirst}=& (-1)^{k+2}(H_{(k-1)1}\wedge F_2)|_{W_{\lambda_0}}.
\end{align}
We will prove this for $k\geq 3$, since the $k=2$ case is easier and uses similar techniques. Ignoring 
the factor of $(-1)^{k+2}$, the right hand side above is
\begin{align}
\label{line1} (H_{(k-1)1}\wedge F)|_{W_{\lambda_0}}:= & \ 
\sum_{l=1}^n\left(\left[\sum_{\lambda_1,\dots,\lambda_{k-1}}\phi^{(k-1)1}_{\lambda_0\dots
\lambda_{k-1}}(\cdot)_{l}d\rho_{\lambda_1}\dots d\rho_{\lambda_{k-1}}\right.\right.   \\
\label{line2}&\quad+(-1)^{k+1}\left.\sum_{\lambda_{1},\dots,\lambda_{k}}(\phi^{(k-2)2}\cup_1 F)_{\lambda_{0}
\dots\lambda_k}(\cdot)_l\rho_{\lambda_1}d\rho_{\lambda_2}\dots d\rho_{\lambda_k}\right]\\
\label{line3}&\quad\wedge \left.\sum_{\lambda_{k+1},\lambda_{k+2}}F_{\lambda_0\lambda_{k+1}
\lambda_{k+2}}(\cdot)_ld\rho_{\lambda_{k+1}} d\rho_{\lambda_{k+2}}\right).
\end{align}
We claim that the wedge product of the term on line (\ref{line2}) with the term on line (\ref{line3}) is zero. Indeed
\begin{align*}
\sum_{l=1}^n& \left[\left(  \sum_{\lambda_1,\dots,\lambda_k}(\phi^{(k-2)2}\cup_1 F)_{\lambda_{0}
\dots\lambda_k}(\cdot)_l\rho_{\lambda_1}d\rho_{\lambda_2}\dots d\rho_{\lambda_k}\right)\right. \\
& \quad\left.\wedge\sum_{\lambda_{k+1}\lambda_{k+2}}F_{\lambda_0\lambda_{k+1}
\lambda_{k+2}}(\cdot)_ld\rho_{\lambda_{k+1}} d\rho_{\lambda_{k+2}}\right] 
\end{align*}
\begin{align*}
=&\sum_{\stackrel{1\leq i<j\leq n}{\lambda_1,\dots,\lambda_{k+2}}}(\phi^{(k-2)2}_{\lambda_0
\dots\lambda_{k-2}}(\cdot)_{ij}(F_{\lambda_{k-2}\lambda_{k-1}\lambda_k}(\cdot)_iF_{\lambda_0
\lambda_{k+1}\lambda_{k+2}}(\cdot)_j\\
&\quad -F_{\lambda_{k-2}\lambda_{k-1}\lambda_k}(\cdot)_jF_{\lambda_0\lambda_{k+1}\lambda_{k+2}}(\cdot)_i)
\rho_{\lambda_1}d\rho_{\lambda_2}\dots d\rho_{\lambda_{k+2}}\\
=&\sum_{\stackrel{1\leq i<j\leq n}{\lambda_1,\dots,\lambda_{k+2}}}(\phi^{(k-2)2}_{\lambda_0
\dots\lambda_{k-2}}(\cdot)_{ij}(F_{\lambda_{0}\lambda_{k-1}\lambda_k}(\cdot)_iF_{\lambda_0
\lambda_{k+1}\lambda_{k+2}}(\cdot)_j\\
&\quad -F_{\lambda_{0}\lambda_{k-1}\lambda_k}(\cdot)_jF_{\lambda_0\lambda_{k+1}
\lambda_{k+2}}(\cdot)_i)\rho_{\lambda_1}d\rho_{\lambda_2}\dots d\rho_{\lambda_{k+2}}\\
=&0.
\end{align*}
Therefore, for $k\geq 3$, since the wedge product of the term on line (\ref{line1}) with the term on 
line (\ref{line3}) is exactly the term on line (\ref{cancelledterm}), we just need to show 
$dD(\phi^{(k-2)2},C(F))|_{W_{\lambda_0}}$ is equal to
\[\sum_{\stackrel{1\leq i<j\leq n}{\lambda_{1},\dots,\lambda_{k+1}}}(\phi^{(k-2)2}\cup_2 
C(F))_{\lambda_{0}\dots\lambda_{k+1}}(\cdot)_{ij}d\rho_{\lambda_1}\dots d\rho_{\lambda_{k+1}}.\]
First, note Lemma \ref{dczerodiffform} implies $\check\partial C(F)_{ij}$ is the zero form. 
Then, the facts that $\check\partial$ is a graded derivation with respect to $\cup_2$ 
and $\phi^{(k-2)2}$ is $\check\partial$-closed imply
\[\sum_{\stackrel{1\leq i<j\leq n}{\lambda_{1},\dots,\lambda_{k+2}}}\check\partial(\phi^{(k-2)2}\cup_2 
C(F))_{\lambda_{0}\dots\lambda_{k+2}}(\cdot)_{ij}\rho_{\lambda_1}d\rho_{\lambda_2}\dots d\rho_{\lambda_{k+2}}=0.\]
Therefore
\begin{align*}
&\sum_{\stackrel{1\leq i<j\leq n}{\lambda_{1},\dots,\lambda_{k+1}}}(\phi^{(k-2)2}\cup_2 
C(F))_{\lambda_{0}\dots\lambda_{k+1}}d\rho_{\lambda_1}d\rho_{\lambda_2}\dots d\rho_{\lambda_{k+1}}\\
&\quad +\sum_{\stackrel{1\leq i<j\leq n}{\lambda_{1},\dots,\lambda_{k+2}}}\check\partial(\phi^{(k-2)2}\cup_2 
C(F))_{\lambda_{1}\dots\lambda_{k+2}}\rho_{\lambda_1}d\rho_{\lambda_2}\dots d\rho_{\lambda_{k+2}}
\end{align*}
\begin{align*}
=&\sum_{\stackrel{1\leq i<j\leq n}{\lambda_{1},\dots,\lambda_{k+1}}}(\phi^{(k-2)2}\cup_2 
C(F))_{\lambda_{0}\dots\lambda_{k+1}}d\rho_{\lambda_1}d\rho_{\lambda_2}\dots d\rho_{\lambda_{k+1}}\\
&\quad +\sum_{\stackrel{1\leq i<j\leq n}{\lambda_{1},\dots,\lambda_{k+2}}}((\check\partial\phi^{(k-2)2})\cup_2 
C(F))_{\lambda_{1}\dots\lambda_{k+2}}\rho_{\lambda_1}d\rho_{\lambda_2}\dots d\rho_{\lambda_{k+2}}\\
=&\sum_{\stackrel{1\leq i<j\leq n}{\lambda_{1},\dots,\lambda_{k+2}}}(\phi^{(k-2)2}\cup_2 
C(F))_{\lambda_{1}\dots\lambda_{k+2}}\rho_{\lambda_1}d\rho_{\lambda_2}\dots d\rho_{\lambda_{k+2}}.
\end{align*}
Now, observe that swapping indices $\lambda_k\leftrightarrow\lambda_{k+2}$ and 
$\lambda_{k-1}\leftrightarrow\lambda_{k+1}$ shows
\begin{align*}
&\sum_{\stackrel{1\leq i<j\leq n}{\lambda_1,\dots,\lambda_{k+2}}}\phi_{\lambda_1\dots
\lambda_{k-1}}^{(k-2)2}(\cdot)_{ij}(F_{\lambda_{k}\lambda_{k+1}\lambda_{k+2}}(\cdot)_{i}
F_{\lambda_{k-1}\lambda_{k}\lambda_{k+2}}(\cdot)_{j})\rho_{\lambda_1}d\rho_{\lambda_2}\dots 
d\rho_{\lambda_{k+2}}=0,\end{align*}
and
\begin{align*}
&\sum_{\stackrel{1\leq i<j\leq n}{\lambda_1,\dots,\lambda_{k+2}}}\phi_{\lambda_{k}\lambda_1
\dots\lambda_{k-2}}^{(k-2)2}(\cdot)_{ij}(F_{\lambda_{k-1}\lambda_{k}\lambda_{k+1}}(\cdot)_{i}
F_{\lambda_{k-1}\lambda_{k+1}\lambda_{k+2}}(\cdot)_{j})\rho_{\lambda_1}d\rho_{\lambda_2}\dots d\rho_{\lambda_{k+2}}=0.
\end{align*}
Thus, using these two identities we see
\begin{align*}
\sum_{\stackrel{1\leq i<j\leq n}{\lambda_1,\dots,\lambda_{k+2}}}& \phi_{\lambda_1
\dots\lambda_{k-1}}^{(k-2)2}(\cdot)_{ij}C(F)_{\lambda_{k-1}\dots \lambda_{k+2}}\rho_{\lambda_1}
d\rho_{\lambda_2}\dots d\rho_{\lambda_{k+2}}\\
=&\sum_{\stackrel{1\leq i<j\leq n}{\lambda_1,\dots,\lambda_{k+2}}}\phi_{\lambda_1
\dots\lambda_{k-1}}^{(k-2)2}(\cdot)_{ij}(F_{\lambda_{k-1}\lambda_{k}\lambda_{k+1}}(\cdot)_{i}
F_{\lambda_{k-1}\lambda_{k+1}\lambda_{k+2}}(\cdot)_{j})\rho_{\lambda_1}d\rho_{\lambda_2}
\dots d\rho_{\lambda_{k+2}}\\
=&\sum_{\stackrel{1\leq i<j\leq n}{\lambda_1,\dots,\lambda_{k+2}}}((-1)^k\phi_{\lambda_{k}\lambda_1
\dots\lambda_{k-2}}^{(k-2)2}+\phi_{\lambda_k\lambda_2\dots\lambda_{k-1}}^{(k-2)2})(\cdot)_{ij}\\
&\quad \times(F_{\lambda_{k-1}\lambda_{k}\lambda_{k+1}}(\cdot)_{i}F_{\lambda_{k-1}\lambda_{k+1}
\lambda_{k+2}}(\cdot)_{j})\rho_{\lambda_1}d\rho_{\lambda_2}\dots d\rho_{\lambda_{k+2}}\\
=&\sum_{\stackrel{1\leq i<j\leq n}{\lambda_2,\dots,\lambda_{k+2}}}(\phi_{\lambda_k\lambda_2
\dots\lambda_{k-1}}^{(k-2)2})(\cdot)_{ij}(F_{\lambda_{k-1}\lambda_{k}\lambda_{k+1}}(\cdot)_{i}
F_{\lambda_{k-1}\lambda_{k+1}\lambda_{k+2}}(\cdot)_{j})d\rho_{\lambda_2}\dots d\rho_{\lambda_{k+2}}\\
=&d\left(\sum_{\stackrel{1\leq i<j\leq n}{\lambda_1,\dots,\lambda_{k+1}}}\phi_{\lambda_{k-1}\lambda_1
\dots\lambda_{k-2}}^{(k-2)2}(\cdot)_{ij}(F_{\lambda_{k-2}\lambda_{k-1}\lambda_{k}}(\cdot)_{i}
F_{\lambda_{k-2}\lambda_{k}\lambda_{k+1}}(\cdot)_{j})\rho_{\lambda_1}d\rho_{\lambda_2}\dots d\rho_{\lambda_{k+1}}\right)\\
=&dD(\phi^{(k-2)2},C(F))|_{W_{\lambda_0}}.
\end{align*}
This completes the verification of Equation (\ref{difffirst}).
\end{proof}

\begin{lemma}\label{coboundarymaptoBHM}
Let $Z$ be a $C^\infty$ manifold, $\W$ a good open cover of $Z$, and $F\in \check{Z}^2(\W,\underline{\Z}^n)$. 
Let the image of $F$ in $\Omega^2(Z,\t)$ under the \v{C}ech-de Rham isomorphism be $F_2$. Then the map
\[C_{F}^k(\W,\underline{\Z})\to C_{F_2}^{k,(0,2)}(Z,\t^*)\,, \]
defined by
\[(\phi^{k0},\phi^{(k-1)1},\phi^{(k-2)2})\mapsto(H_{k0},H_{(k-1)1},H_{(k-2)2})\,, \]
maps $D_F$-coboundaries to $D_{F_2}$-coboundaries.
\end{lemma}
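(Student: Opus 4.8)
The plan is to produce, for an arbitrary cochain $\xi=(\phi^{(k-1)0},\phi^{(k-2)1},\phi^{(k-3)2})\in C_F^{k-1}(\W,\underline{\Z})$, an explicit element $\Theta\in C_{F_2}^{k-1,(0,2)}(Z,\t^*)$ with $D_{F_2}\Theta=(H_{k0},H_{(k-1)1},H_{(k-2)2})$, where the right-hand side denotes the image of the $D_F$-coboundary $D_F\xi$ under the map of Section \ref{BHMwork}. First I would define $\Theta=(\Theta_{(k-1)0},\Theta_{(k-2)1},\Theta_{(k-3)2})$ by applying to $\xi$ the very recipe that defines $(H_{k0},H_{(k-1)1},H_{(k-2)2})$ in Section \ref{BHMwork}, with the index $k$ lowered by one; the only modification is that, since the entries of $\xi$ need not be $\check\partial$-closed, one must reinstate in $\Theta_{(k-3)2}$ the collating correction $\sum_{\lambda_1,\dots,\lambda_{k-2}}(\check\partial\phi^{(k-3)2})_{\lambda_0\dots\lambda_{k-2}}(\cdot)_{ij}\rho_{\lambda_1}d\rho_{\lambda_2}\dots d\rho_{\lambda_{k-2}}$ that is legitimately absent from the formula for $H_{(k-2)2}$ (where $\phi^{(k-2)2}$ is $\check\partial$-closed). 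One then checks that each $\Theta_\bullet$ genuinely patches to a global form by the usual \v{C}ech--de Rham argument underlying the collating formula of \cite[Prop 9.5]{BotTu82}: the obstruction to $\sum\phi_{\lambda_0\dots}d\rho\cdots d\rho$ patching over $W_{\lambda_0}\cap W_{\mu_0}$ is exactly cancelled by the variation of the $\check\partial\phi$-correction, using only $\check\partial^2=0$ and $\sum_\mu d\rho_\mu=d(\sum_\mu\rho_\mu)=0$.

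It then remains to verify $D_{F_2}\Theta=(H_{k0},H_{(k-1)1},H_{(k-2)2})$ slot by slot; equivalently, that the collating-with-corrections map is a cochain map from $C_F^\bullet$ to $C_{F_2}^{\bullet,(0,2)}$, a statement whose cocycle case is Lemma \ref{maptoBHM}. In the bottom slot $d\Theta_{(k-3)2}$ collapses (by $d\,d\rho_{\lambda_i}=0$) to the plain collating of the closed cochain $\check\partial\phi^{(k-3)2}$, which is precisely $H_{(k-2)2}$ applied to $D_F\xi$. In the middle slot, $d\Theta_{(k-2)1}$ similarly yields the collating of $\check\partial\phi^{(k-2)1}$, while $\Theta_{(k-3)2}\wedge F_2$ has to match the collating (and correction) of $(-1)^{k-1}\phi^{(k-3)2}\cup_1 F$; here I would use, first, the identity $(\text{collating of }\psi)\wedge F_2=\text{collating of }(\psi\cup_1 F)$, valid for \emph{any} $\psi$ since its proof uses only the cocycle identity for $F$ exactly as in the passage from Equation (\ref{stage}) to Equation (\ref{H1LHS}), and, second, Lemma \ref{gradedderivation} together with $\check\partial F=0$ to rewrite $\check\partial(\phi^{(k-3)2}\cup_1 F)=(\check\partial\phi^{(k-3)2})\cup_1 F$. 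The novelty relative to Lemma \ref{maptoBHM} is only bookkeeping: the $\check\partial$-terms that were discarded there must now be carried along, and they line up because the correction terms of $\Theta$ and of $(H_{k0},H_{(k-1)1},H_{(k-2)2})$ were built from the same recipe.

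The main obstacle will be the top slot, for precisely the reason it is delicate in Lemma \ref{maptoBHM}: both $H_{k0}$ and $\Theta_{(k-1)0}$ carry a $D(\,\cdot\,,C(F))$ correction, and $C(F)$ fails to be $\check\partial$-closed. Assembling the top slot of $D_{F_2}\Theta$ and the top slot of $(H_{k0},\dots)$ applied to $D_F\xi$, the manifestly matching pieces (those built from $\phi^{(k-1)0}$ and $\phi^{(k-2)1}$) cancel, and the verification reduces to the single identity that $d\,D(\phi^{(k-3)2},C(F))$ equals the collating of $\phi^{(k-3)2}\cup_2 C(F)$ modulo the terms $D(\check\partial\phi^{(k-3)2},C(F))$ and the correction of $\phi^{(k-3)2}\cup_2\check\partial C(F)$. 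I would establish this by running the chain of equalities from Equation (\ref{cancelledterm}) to Equation (\ref{difffirst}) in the proof of Lemma \ref{maptoBHM} verbatim, now with $\phi^{(k-2)2}$ replaced by $\check\partial\phi^{(k-3)2}$ and without dropping the $\check\partial\phi^{(k-3)2}$ contributions; the term in $\check\partial C(F)$ is handled exactly as there via Lemma \ref{dczerodiffform} (its collating image in degree four is zero), and the index relabellings $\lambda_k\leftrightarrow\lambda_{k+2}$, $\lambda_{k-1}\leftrightarrow\lambda_{k+1}$ used in that proof apply unchanged. The case $k=2$ requires the separate, easier argument already alluded to after Equation (\ref{difffirst}). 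Once the three slots agree, $D_F\xi$ has $D_{F_2}$-exact image, which is the assertion.
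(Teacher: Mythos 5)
Your proposal is correct and is essentially the paper's own argument: your $\Theta$, written as ``collation plus $\check\partial$-correction,'' coincides as global forms with the paper's explicit primitive $(H_{(k-1)0}+(-1)^{k-1}H(C)_{(k-1)0},\,H_{(k-2)1},\,H_{(k-3)2})$ (your degree-shifted $D(\phi^{(k-3)2},C(F))$ term is, up to sign and relabelling of indices, exactly $H(C)_{(k-1)0}$). The slot-by-slot verification you outline — carrying the $\check\partial$-terms through the computations of Lemma \ref{maptoBHM}, using the cocycle identity for $F$, $\sum_\mu d\rho_\mu=0$, the index swaps, and Lemma \ref{dczerodiffform} for the $\check\partial C(F)$ contribution — is the same computation the paper performs.
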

\begin{proof}
We only prove this for $k\geq 3$, since the case $k\leq 2$ is similar and easier. Suppose
\begin{equation}\label{phidifferential}
(\phi^{k0},\phi^{(k-1)1},\phi^{(k-2)2})=D_F(\phi^{(k-1)0},\phi^{(k-2)1},\phi^{(k-3)2}).
\end{equation}
We claim there exists a cochain $(H_{(k-1)0}+(-1)^{k-1}H(C)_{(k-1)0},H_{(k-2)1},H_{(k-3)2})$ such that
\[(H_{k0},H_{(k-1)1},H_{(k-2)2})=D_{F_2}(H_{(k-1)0}+(-1)^{k-1}H(C)_{(k-1)0},H_{(k-2)1},H_{(k-3)2}).\]
Indeed, the formulas for $H_{(k-1)0}$, $H_{(k-2)1}$, $H_{(k-3)2}$, and $H(C)_{(k-1)0}$ are
\begin{align*}
H_{(k-3)2}|_{W_{\lambda_0}}:=&\sum_{\lambda_{1},\dots, \lambda_{k-2}}\phi^{(k-3)2}_{\lambda_1
\dots \lambda_{k-2}}\rho_{\lambda_1}d\rho_{\lambda_2}\dots d\rho_{\lambda_{k-2}} \,, \\
H_{(k-2)1}|_{W_{\lambda_0}}:=&\sum_{\lambda_{1},\dots,\lambda_{k-1}}\phi^{(k-2)1}_{\lambda_1
\dots \lambda_{k-1}}(\cdot)_l\rho_{\lambda_1}d\rho_{\lambda_2}\dots \lambda_{k-1}\,, \\
H_{(k-1)0}|_{W_{\lambda_0}}:=&\sum_{\lambda_{1},\dots,\lambda_{k}}\phi^{(k-1)0}_{\lambda_{1}
\dots\lambda_{k}}\rho_{\lambda_2}d\rho_{\lambda_3}\dots d\rho_{\lambda_{k+1}} \,,
\end{align*}
and
\begin{align*}
&H(C)_{(k-1)0}|_{W_{\lambda_0}}:=\\
&\begin{cases}
-\sum_{\stackrel{1\leq i<j\leq n}{\lambda_{1},\dots,\lambda_{4}}}\phi_{2}^{02}(\cdot)_{ij}
C_{\lambda_{1}\dots\lambda_{4}}(F)_{ij}\rho_{\lambda_1}\rho_{\lambda_2}d\rho_{\lambda_3}
d\rho_{\lambda_{4}}& k=3\\
-\sum_{\stackrel{1\leq i<j\leq n}{\lambda_{1},\dots,\lambda_{k+1}}}\phi_{\lambda_{k-1}\lambda_2
\dots\lambda_{k-2}\lambda_{k-1}}^{(k-3)2}(\cdot)_{ij}C(F)_{\lambda_{k-2}
\dots\lambda_{k+1}}(\cdot)_{ij}\rho_{\lambda_2}\dots d\rho_{\lambda_{k+1}}& k>3.
\end{cases}
\end{align*}
Now, if $\phi^{(k-2)2}=\check\partial \phi^{(k-3)2}$ we have
\begin{align*}
(H_{(k-2)2})_{ij}|_{W_{\lambda_0}}:=&\sum_{\lambda_1,\dots, \lambda_{k-2}}
\phi^{(k-2)2}_{\lambda_0\dots\lambda_{k-2}}(\cdot)_{ij}d\rho_{\lambda_1}\dots d\rho_{\lambda_{k-2}}\\
=&\sum_{\lambda_1,\dots, \lambda_{k-2}}\check\partial \phi^{(k-3)2}_{\lambda_0
\dots\lambda_{k-2}}(\cdot)_{ij}d\rho_{\lambda_1}\dots d\rho_{\lambda_{k-2}}\\
=&d\left(\sum_{\lambda_1,\dots, \lambda_{k-2}}\phi^{(k-3)2}_{\lambda_1\dots\lambda_{k-2}}(\cdot)_{ij}
\rho_{\lambda_1}\dots d\rho_{\lambda_{k-2}}\right)\\
=&dH_{(k-3)2}|_{W_{\lambda_0}}.
\end{align*}
Also, the Equation (\ref{phidifferential}) implies
\begin{align*}
(H_{(k-1)1})_{l}|_{W_{\lambda_0}}:= &\sum_{\lambda_{1},\dots,\lambda_k}(\check\partial
\phi^{(k-2)1}+(-1)^{k-1}\phi^{(k-3)2}\cup F)_{\lambda_{1}\dots\lambda_k}(\cdot)_{l}\rho_{\lambda_1}
d\rho_{\lambda_2}\dots d\rho_{\lambda_{k}}
\end{align*}
\begin{align*}
=&d\left(\sum_{\lambda_{2},\dots,\lambda_k}\phi^{(k-2)1}_{\lambda_{2}\dots\lambda_k}(\cdot)_{l}\rho_{\lambda_2}
d\rho_{\lambda_{3}}\dots d\rho_{\lambda_{k}}\right)\\
&\quad+(-1)^{k-1}\left(\sum_{\lambda_{1},\dots, \lambda_{k-2}}\phi^{(k-3)2}_{\lambda_1\dots \lambda_{k-2}}
\rho_{\lambda_1}d\rho_{\lambda_2}\dots d\rho_{\lambda_{k-2}}\right)\\
&\quad\wedge\left( \sum_{\lambda_{k-1},\lambda_k}F_{\lambda_{0}\lambda_{k-1}\lambda_k}(\cdot)_{l}
d\rho_{\lambda_{k-1}}d\rho_{\lambda_{k}}\right)\\
=&dH_{(k-2)1}|_{W_{\lambda_0}}+(-1)^{k-1}H_{(k-3)2}|_{W_{\lambda_0}}\wedge F_2|_{W_{\lambda_0}}.
\end{align*}
Now we deal with the last term. Observe that Equation (\ref{phidifferential}) implies
\begin{align*}
(H_{k0})|_{W_{\lambda_0}}:=&\sum_{\lambda_{1},\dots,\lambda_{k+1}}\phi^{k0}_{\lambda_{1}
\dots\lambda_{k+1}}\rho_{\lambda_1}d\rho_{\lambda_2}\dots d\rho_{\lambda_{k+1}}+ (-1)^{k+2}D(\phi^{(k-2)2},C(F))\\
=&\sum_{\lambda_{1},\dots,\lambda_{k+1}}\left(\check\partial\phi^{(k-1)0}+(-1)^{k}\phi^{(k-2)1}\cup_1 F\right.\\
&\quad +\left.(-1)^{k}\phi^{(k-3)2}\cup_2 C(F))_{\lambda_{1}\dots\lambda_{k+1}}\rho_{\lambda_1}
d\rho_{\lambda_2}\dots d\rho_{\lambda_{k+1}}\right)\\
&\quad +(-1)^{k+1}D(\check\partial\phi^{(k-3)2},C(F))|_{W_{\lambda_0}}\\
=&d\left(\sum_{\lambda_{2},\dots,\lambda_{k+1}}\phi^{(k-1)0}_{\lambda_{2}\dots\lambda_{k+1}}
\rho_{\lambda_2}d\rho_{\lambda_3}\dots d\rho_{\lambda_{k+1}}\right)\\
&\quad+(-1)^{k}\sum_{l=1}^n\left[\left(\sum_{\lambda_{1}\dots\lambda_{k-1}}\phi^{(k-2)1}(\cdot)_l
\rho_{\lambda_1}d\rho_{\lambda_2}\dots \lambda_{k-1}\right)\right.\\
&\quad\quad\wedge\left.\left( \sum_{\lambda_{k},\lambda_{k+1}}F_{\lambda_{0}\lambda_{k}\lambda_{k+1}}(\cdot)_l
d\rho_{\lambda_k} d\rho_{\lambda_{k+1}}\right)\right]\\
&\quad+(-1)^{k}(\phi^{(k-3)2}\cup_2 C(F))_{\lambda_{1}\dots\lambda_{k+1}}\rho_{\lambda_1}d\rho_{\lambda_2}\dots 
d\rho_{\lambda_{k+1}}\\
&\quad+(-1)^{k-1}D(\check\partial\phi^{(k-3)2},C(F))|_{W_{\lambda_0}}\\
=&dH_{(k-1)0}|_{W_{\lambda_0}}+(-1)^{k}H_{(k-2)1}|_{W_{\lambda_0}}\wedge F_2|_{W_{\lambda_0}}\\
&\quad+(-1)^{k}(\phi^{(k-3)2}\cup_2 C(F))_{\lambda_{1}\dots\lambda_{k+1}}\rho_{\lambda_1}
d\rho_{\lambda_2}\dots d\rho_{\lambda_{k+1}}\\
&\quad+(-1)^{k-1}D(\check\partial\phi^{(k-3)2},C(F))|_{W_{\lambda_0}}.
\end{align*}
Removing the factor of $(-1)^{k-1}$, it therefore suffices to show that 
$D(\check\partial\phi^{(k-3)2},C(F))|_{W_{\lambda_0}}$ differs from
\begin{equation}\label{phi02cupc}
\sum_{\stackrel{1\leq i<j\leq n}{\lambda_{1}\dots\lambda_{k+1}}}(\phi^{(k-3)2}\cup_2 
C(F))_{\lambda_{1}\dots\lambda_{k+1}}(\cdot)_{ij}\rho_{\lambda_1}d\rho_{\lambda_2}\dots d\rho_{\lambda_{k+1}}\,,
\end{equation}
by $dH(C)_{(k-1)0}|_{W_{\lambda_0}}$.

We provide the details for the case $k=3$; the cases $k>3$ are similar (the reader can consult 
\cite{Rat} for the details). When $k=3$ we have
\begin{align}
D(\check\partial\phi^{02},C(F))|_{W_{\lambda_0}}=&
\sum_{\stackrel{1\leq i<j\leq n}{\lambda_{1},\dots,\lambda_{4}}}(\phi_{\lambda_{1}}^{02}-
\phi_{\lambda_{2}}^{02})_{ij}C(F)_{\lambda_{1}\dots\lambda_{4}}(\cdot)_{ij}\rho_{\lambda_1}
d\rho_{\lambda_2}\dots d\rho_{\lambda_{4}}\notag\\
\label{phi02cupc2}=&\sum_{\stackrel{1\leq i<j\leq n}{\lambda_{1},\dots,\lambda_{4}}}
\phi_{\lambda_{1}}^{02}(\cdot)_{ij}C(F)_{\lambda_{1}\dots\lambda_{4}}(\cdot)_{ij}
\rho_{\lambda_1}d\rho_{\lambda_2}\dots d\rho_{\lambda_{4}} \\
\label{phi02cupcother}&\quad - \sum_{\stackrel{1\leq i<j\leq n}{\lambda_{1},\dots,
\lambda_{4}}}\phi_{\lambda_{2}}^{02}(\cdot)_{ij}C(F)_{\lambda_{1}\dots\lambda_{4}}(\cdot)_{ij}
\rho_{\lambda_1}d\rho_{\lambda_2}\dots d\rho_{\lambda_{4}}.
\end{align}
Observe that (\ref{phi02cupc2}) above is exactly (\ref{phi02cupc}), so we only need to show 
(\ref{phi02cupcother}) is equal to $dH(C)_{(k-1)0}|_{W_{\lambda_0}}$. Now, swapping 
indices $\lambda_2\leftrightarrow\lambda_3$ below shows
\[\sum_{\stackrel{1\leq i<j\leq n}{\lambda_{1},\dots,\lambda_{4}}}\phi_{\lambda_1}^{02}(\cdot)_{ij}
F_{\lambda_{2}\lambda_1\lambda_{3}}(\cdot)_iF_{\lambda_{2}\lambda_3\lambda_{4}}(\cdot)_j
\rho_{\lambda_1}d\rho_{\lambda_2}\dots d\rho_{\lambda_{4}}=0.\]
Now we compute, interchanging indices $\lambda_1$ and $\lambda_2$ from (\ref{onetwoswap1}) 
to (\ref{onetwoswap2}) below:
\begin{align}
&d\left(\sum_{\stackrel{1\leq i<j\leq n}{\lambda_{1},\dots,\lambda_{4}}}\phi_{\lambda_2}^{02}(\cdot)_{ij}
C(F)_{\lambda_{1}\dots\lambda_{4}}(\cdot)_{ij}\rho_{\lambda_1}\rho_{\lambda_2}d\rho_{\lambda_3}
d\rho_{\lambda_{4}}\right)\notag\\
=&\sum_{\stackrel{1\leq i<j\leq n}{\lambda_{1},\dots,\lambda_{4}}}\phi_{2}^{02}(\cdot)_{ij}
C(F)_{\lambda_{1}\dots\lambda_{4}}(\cdot)_{ij}\rho_{\lambda_1}d\rho_{\lambda_2}\dots d\rho_{\lambda_{4}}\notag\\
\label{onetwoswap1}&\quad\sum_{\lambda_{1},\dots,\lambda_{4}}\phi_{\lambda_2}^{02}(\cdot)_{ij}
C(F)_{\lambda_{1}\dots\lambda_{4}}(\cdot)_{ij}\rho_{\lambda_2}d\rho_{\lambda_1}d\rho_{\lambda_3}d\rho_{\lambda_{4}}
\end{align}
\begin{align}
=&\sum_{\stackrel{1\leq i<j\leq n}{\lambda_{1},\dots,\lambda_{4}}}\phi_{\lambda_2}^{02}(\cdot)_{ij}
C(F)_{\lambda_{1}\dots\lambda_{4}}(\cdot)_{ij}\rho_{\lambda_1}d\rho_{\lambda_2}\dots d\rho_{\lambda_{4}}\notag\\
\label{onetwoswap2}&\quad +\sum_{\lambda_{1},\dots,\lambda_{4}}\phi_{\lambda_1}^{02}(\cdot)_{ij}
C(F)_{\lambda_{2}\lambda_{1}\lambda_{3}\lambda_{4}}(\cdot)_{ij}\rho_{\lambda_1}d\rho_{\lambda_2}\dots d\rho_{\lambda_{4}}\\
=&\sum_{\stackrel{1\leq i<j\leq n}{\lambda_{1},\dots,\lambda_{4}}}\phi_{\lambda_2}^{02}(\cdot)_{ij}
C(F)_{\lambda_{1}\dots\lambda_{4}}(\cdot)_{ij}\rho_{\lambda_1}d\rho_{\lambda_2}\dots d\rho_{\lambda_{4}}\notag\\
&\quad +\sum_{\lambda_{1},\dots,\lambda_{4}}\phi_{\lambda_1}^{02}(\cdot)_{ij}F_{\lambda_{2}\lambda_1\lambda_{3}}(\cdot)_i
F_{\lambda_{2}\lambda_3\lambda_{4}}(\cdot)_j\rho_{\lambda_1}d\rho_{\lambda_2}\dots d\rho_{\lambda_{4}}\notag\\
=&\sum_{\stackrel{1\leq i<j\leq n}{\lambda_{1},\dots,\lambda_{4}}}\phi_{\lambda_2}^{02}
(\cdot)_{ij}C(F)_{\lambda_{1}\dots\lambda_{4}}(\cdot)_{ij}\rho_{\lambda_1}
d\rho_{\lambda_2}\dots d\rho_{\lambda_{4}}.\notag
\end{align}
This last line is exactly $-1\times$(\ref{phi02cupcother}). Therefore $D(\check\partial\phi^{02},
C(F))|_{W_{\lambda_0}}$ differs from \[\sum_{\stackrel{1\leq i<j\leq n}{\lambda_{1}
\dots\lambda_{4}}}(\phi^{02}\cup_2 C(F))_{\lambda_{1},\dots,\lambda_{4}}(\cdot)_{ij}
\rho_{\lambda_1}d\rho_{\lambda_2}\dots d\rho_{\lambda_{4}}\]
by
\[d\left(-\sum_{\stackrel{1\leq i<j\leq n}{\lambda_{1},\dots,\lambda_{4}}}\phi_{2}^{02}
(\cdot)_{ij}C_{\lambda_{1}\dots\lambda_{4}}(F)_{ij}\rho_{\lambda_1}\rho_{\lambda_2}
d\rho_{\lambda_3}d\rho_{\lambda_{4}}\right)=dH(C)_{(k-1)0}|_{W_{\lambda_0}}.\]
Therefore we have shown, that if
\[(\phi^{k0},\phi^{(k-1)1},\phi^{(k-2)2})=D_F(\phi^{(k-1)0},\phi^{(k-2)1},\phi^{(k-3)2})\,,\]
then the image of $(\phi^{k0},\phi^{(k-1)1},\phi^{(k-2)2})$ is
\[(H_{k0},H_{(k-1)1},H_{(k-2)2})=D_{F_2}(H_{(k-1)0}+(-1)^{k-1}H(C)_{(k-1)0},H_{(k-2)1},H_{(k-3)2})]\,.\]
\end{proof}



\end{document}